\def\@rmrk#1#2{\refstepcounter
  {#1}\@ifnextchar[{\@yrmrk{#1}{#2}}{\@xrmrk{#1}{#2}}}
\makeatletter\@addtoreset{equation}{section}\makeatother
 \newfont{\bfit}{cmbxti10 scaled 1200}
\renewcommand{\d}{{\rm{d}}}
 \newcommand{\e}{{\rm e} }
 \newcommand{\eps}{\varepsilon}
 \newcommand{\R}{\mathbb{R}}
 \newcommand{\N}{\mathbb{N}}
 \newcommand{\Z}{\mathbb{Z}}
 \newcommand{\Sym}{\mathfrak{S}}
 \newcommand{\E}{\mathbb{E}}
 \renewcommand{\P}{\mathbb{P}}
 \def\1{{\mathchoice {1\mskip-4mu\mathrm l} 
{1\mskip-4mu\mathrm l}
{1\mskip-4.5mu\mathrm l} {1\mskip-5mu\mathrm l}}}
\newcommand{\Ccal}{{\mathcal C}}
 \newcommand{\Hcal}{{\mathcal H}}
 \newcommand{\Xcal}{{\mathcal X}}
 \newcommand{\be}{\alpha}
  \newcommand{\smfrac}[2]{\textstyle{\frac {#1}{#2}}}
\newcommand\FF{{\mathcal F}}
\newcommand\LL{{\mathcal L}}
\newcommand\MM{{\mathcal M}}
\newcommand\PP{{\mathcal P}}
\newcommand\PMF{{\PP\kern-2pt\MM\FF}}
\newcommand\PML{{\PP\kern-2pt\MM\LL}}
\renewcommand{\L}{\Lambda}
\newcommand{\ssup}[1] {{\scriptscriptstyle{({#1}})}}
\newtheorem{theorem}{Theorem}[section]
\newtheorem{lemma}[theorem]{Lemma}
\newtheorem{prop}[theorem]{Proposition}
\newtheoremstyle{theorem}{1.5ex}{1.5ex}{\itshape\rmfamily}{}
{\bfseries\rmfamily}{}{2ex}{}
\newcounter{remark}
\newenvironment{remark}{
  \refstepcounter{remark}
  \noindent {\bf Remark \theremark\ }\nopagebreak
}{
  \nopagebreak {\hfill\rule{2mm}{2mm}}\\
}
\def\thebibliography#1{\section*{References}
 \list%
 {\arabic{enumi}.}
  {\settowidth\labelwidth{[#1]}\leftmargin\labelwidth
  \advance\leftmargin\labelsep
  \parsep0pt\itemsep0pt
  \usecounter{enumi}}
  \def\newblock{\hskip .11em plus .33em minus .07em}
  \sloppy          
  \sfcode`\.=1000\relax}
\begin{document}

\title[Self-Repellent Brownian Bridges in an Interacting Bose Gas]
{\large Self-Repellent Brownian Bridges\\ \medskip in an Interacting Bose Gas}
\author{Erwin Bolthausen}
\address{Erwin Bolthausen, Institut f\"ur Mathematik, Universit\"at Z\"urich, Winterthurstrasse 190, Z\"urich Ch 8057, Switzerland}

\email{eb@math.uzh.ch}

\author{Wolfgang K\"onig}
\address{Wolfgang K\"onig, WIAS Berlin and TU Berlin, Mohrenstrasse 39, Berlin 10117, Germany}

\email{koenig@wias-berlin.de}

\author{Chiranjib Mukherjee}
\address{Chiranjib Mukherjee, Fachbereich Mathematik und Informatik, Universit\"at M\"unster, Einsteinstra{\ss}e 62, M\"unster 48149, Germany}

\email{chiranjib.mukherjee@uni-muenster.de}

\thispagestyle{empty}
\vspace{-0.5cm}

\subjclass[2010]{60F10; 60J65; 82B10; 81S40} 

\keywords{Self-repellent Brownian bridges, interacting Bose gas, Dirichlet boundary condition, condensation phase transition, random geometric permutation, random partition, large deviations, lace expansion.}

\date{\today}

\begin{abstract}
We consider a model of $d$-dimensional interacting quantum Bose gas, expressed in terms of an ensemble of interacting Brownian bridges in a large box and undergoing the influence of all the interactions between the legs of each of the Brownian bridges. We study the thermodynamic limit of the system and give an explicit formula for the limiting free energy and a necessary and sufficient criterion for the occurrence of a condensation phase transition. For $d\geq 5$ and sufficiently small interaction, we prove that the condensate phase is not empty. The ideas of proof rely on the similarity of the interaction to that of the self-repellent random walk, and build on a lace expansion method conducive to treating {\it paths} undergoing mutual repellence within each bridge.

\end{abstract}

\maketitle

\tableofcontents

\section{Introduction and main results}\label{sec-intro}

\subsection{Background.}\label{sec-Background}

\noindent The well-known {\em interacting Bose gas} can be written, using the {\em Feynman--Kac formula}, as a random ensemble of many Brownian bridges of random and unbounded lengths ($=$ particle numbers) with mutually repellent interactions between any two legs of any of the bridges. In the thermodynamic limit, the transition from absence to emergence of  macroscopically many particles in long bridges in this ensemble as the density increases is a prominent open problem that is strongly linked with the famous phase transition called {\em Bose--Einstein condensation (BEC)}, if not equal to that. 

The starting observation of the present paper is the following: Each of these bridges, under the influence of all the interactions between its legs alone, is a transformation of a Brownian bridge that should lie in the class of the well-known {\em self-repellent walk} (or {\em weakly self-avoiding walk}). This class of random motions is prominent in statistical physics and has been studied a lot since several decades, although a lot of problems are left widely open. 

However, to the best of our knowledge, a substantial connection between the Bose gas and the self-repellent walk has not been made. In this paper, we establish this connection 
to study the influence of the interactions between its legs and characterize the free energy of the system explicitly in terms of a variational formula and find criteria for the presence of a condensate phase transition in this model. These criteria lie at the heart of the critical properties of the self-repellent Brownian bridges. 

Below in Section~\ref{sec-model} we will introduce the mathematical layout of the model, in Section \ref{sec-Purpose} we will explain our purposes and formulate our main results in Section~\ref{sec-Results}. In Section~\ref{sec-background} we will discuss the necessary background, outline the method of proof and draw conclusions from our main results. The proofs will be given in the subsequent sections.

\subsection{The Model.}\label{sec-model}

\noindent Let us introduce the model that we are interested in. As will be explained in Section \ref{sec-BoseModel}, 
it is indeed a version of the interacting Bose gas. For the sake of simplicity, we prefer to 
formulate it as a model of a {\it random partition} of a positive integer $N\in \N$.

Indeed, denote the set of partitions of $N\in\N$ by
\begin{equation}\label{PNdef}
\mathfrak{P}_N=\Big\{l=(l_k)_{k \in \N}\in\N_0^\N\colon\sum_{k\in \N} kl_k=N\Big\}.
\end{equation}
With a parameter $\beta\in(0,\infty)$, introduce the following probability measure on $\mathfrak P_N$:
\begin{equation}\label{PNNewdef}
{\tt P}^{\ssup N}_{\beta,\L}(l)=\frac 1{ Z_N^{\ssup{{\rm bc}}}(\beta,\L)}\prod_{k\in\N}\frac{\big[|\L|\Gamma^{\ssup{\rm bc}}_{\L,k}\big]^{l_k}}{l_k!\,k^{l_k}},\qquad l=(l_k)_{k\in\N}\in\mathfrak P_N,
\end{equation}
where 
\begin{align}\label{cycleNew}
Z_N^{\ssup{{\rm bc}}}(\beta,\L)=
 \sum_{l\in\mathfrak{P}_N}  \prod_{k\in\N}\frac{\big[|\L|\Gamma^{\ssup{\rm bc}}_{\L,k}\big]^{l_k}}{l_k!\,k^{l_k}}
\end{align}
is the partition function of the model (the normalization). It is well-known that $N /\prod_{k \in\N} l_k! k^{l_k}$ is equal to the number of permutations of $1,\dots,N$ that have precisely $l_k$ cycles for any $k\in\N$. Thus, \eqref{PNdef}- \eqref{cycleNew} indeed define a model of a random permutation with multiplicative weight $|\L| \Gamma_{\L,k}^{\ssup{\rm bc}}$ for any cycle of length $k$; see Section~\ref{sec-RandPerm} for further details.

Let us now explain the terms appearing in \eqref{PNdef} -- \eqref{cycleNew}. First, $\Lambda\subset \R^d$ denotes a centered box, and $\Gamma^{\ssup{\rm bc}}_{\L,k}$ is defined as an integral of an exponential interaction with respect to a Brownian bridge of length $k$ in $\L$ (cf.\ \eqref{Gammabcdef} below). If we denote by $\mu_{x,y}^{\ssup{\rm{bc},\beta}}(\cdot)$ the canonical  Brownian bridge measure on the time interval $[0,\beta]$ from $x$ to $y$ subject to the boundary condition \lq bc\rq\ in the box $\Lambda$ 
(the boundary condition to be specified later), we set  
\begin{equation}\label{muLambdadef}
\mu_{\L}^{\ssup{{\rm bc},\beta}}(\d f)=\frac 1{|\L|} \int_{\L} \d x\, \mu^{\ssup{{\rm bc},\beta}}_{x,x}(\d f).
\end{equation}
Thus, $\mu_{\L}^{\ssup{{\rm bc},\beta}}(\cdot)$ is
a Brownian bridge measure on $\Ccal=\Ccal_1$, where $\Ccal_k$ denotes the set of continuous functions $[0, k\beta]\to\R^d$, with uniform starting-termination site in the centered box $\L$. If we write $\mu(f)$ for the integral $\int f \d\mu$ of a function $f$ with respect to a measure $\mu$, the weight $\Gamma^{\ssup{\rm bc}}_{\L,k}$ is defined as 
\begin{equation}\label{Gammabcdef}
\Gamma^{\ssup{\rm bc}}_{\L,k}=\mu_{\L}^{\ssup{{\rm bc},k\beta}}\Big[{\rm e}^{-\sum_{1\leq i<j\leq k}V(B_i,B_j)}\Big],
\end{equation}
where, for any two continuous functions $f, g \in \mathcal C$, we write 
\begin{equation}\label{Vdef}
V(f,g)=\int_0^\beta v(|f(s)-g(s)|)\,\d s,
\end{equation}
and $v\colon[0,\infty) \to [0,\infty)$ is a bounded measurable {\it pair interaction potential} having compact support. In \eqref{Gammabcdef}, the Brownian bridge $B$ in $\Ccal_k$ has been decomposed into its {\it $k$ legs}, defined as 
\begin{equation}\label{jthleg}
 B_j=(B_j(s))_{s\in[0,\beta]}=(B((j-1)\beta+s))_{s\in[0,\beta]} \in\Ccal, \qquad j\in[k]=\{1,\dots,k\}.
\end{equation}
Hence, the exponential interaction term in \eqref{Gammadef} is the pair-interaction sum of all the legs of a Brownian bridge of length $k$. 
Therefore, $\Gamma^{\ssup{\rm bc}}_{\L,k}$ is the partition function of what can be called a variant of the {\em self-repellent} (or {\em weakly self-avoiding}) Brownian bridge, where the usual $\delta$-interaction between any two $B(s)$ and $B(t)$ is replaced by a smooth interaction between any two legs $B_i$ and $B_j$. The starting site of this motion is not the origin as usual, but is uniformly distributed over a large box, and the endpoint is conditioned to be the initial site. We will be studying this model in the limit $N\to\infty$ with the box $\L=\L_N$ having volume $\sim N/\rho$ for some $\rho \in(0,\infty)$, i.e., in the thermodynamic limit.


By the Feynman--Kac formula, the above model can also be represented as an ensemble of $N$ Brownian bridges in $\L$ with time-horizon $[0,\beta]$ under some symmetrization condition, where each leg has an interaction with each other leg. Due to the symmetrization, one can decompose the ensemble into closed loops (bridges); see Section~\ref{sec-BoseModel} for further details. 

We remark that we have kept all features that come from the Bose gas in a large box, in particular the fact that the particles are confined to a box with certain boundary conditions. Most of the existing works on the Bose gas consider periodic boundary conditions, and focus on models of random permutations that dispense with considerations of boundary conditions. However, we find it important to keep them in the model, since they are physically relevant and since they might lead to additional effects. Indeed, for periodic or Dirichlet zero boundary conditions, the asymptotics of the weight $\Gamma^{\ssup{\rm bc}}_{\L,k}$ in \eqref{Gammabcdef} for large $k$ and boxes $\L$ of volume $\asymp N$ are clearly different for $k\ll N^{2/d}$ and for $k\gg N^{2/d}$.


\subsection{Our Purpose.}\label{sec-Purpose}

\noindent As mentioned above, we will be interested in the {\em thermodynamic limit} of the model described above. Namely, we fix the inverse temperature $\beta \in (0,\infty)$, a density $\rho\in (0,\infty)$, take the box $\L=\L_N$ of volume $|\L_N|=\frac N\rho$  
and define the corresponding {\it limiting free energy per volume} as 
\begin{equation}\label{freeenergymodified}
f(\beta,\rho):=-\lim_{N\to\infty}\frac 1{|\L_N|}\log  Z_N^{\ssup{\rm bc}}(\beta,\L_N),\qquad\beta,\rho\in(0,\infty).
\end{equation}
In our first main result, Theorem~\ref{thm-limfree},  we will see that  this limit exists, is independent of boundary conditions and can be expressed in terms of a {\em characteristic variational formula} that describes the statistics of all the lengths of the Brownian bridges. Next, in this result we prove a large-deviation principle for these statistics and identify the minimizer of the rate function. This implies in particular that the statistics of the cycle lengths converge in distribution towards that minimizer. Now, the characterization of the minimizer involves a possible phase transition in the density $\rho$ -- that is, a {\it non-analyticity} of the map 
\begin{equation}\label{non-analytic}
\rho\mapsto f(\beta,\rho)
\end{equation} emerges at some explicit {\em critical threshold} $\rho_{\rm c}(\beta)$, which may or may not be finite in general.  This phase transition underlines a {\em loss of mass} in the statistics of the finite-size cycle lengths in the spirit of the well-known effect in the free Bose gas; it is therefore a signal for a condensation phase transition in a vicinity of the Bose--Einstein condensation.  

So far, the above statements are valid for a wide choice of the weights $\Gamma_{\Lambda,k}^{\ssup{\rm bc}}$ in \eqref{cycleNew} as well as arbitrary $d\in \N$ and $\beta\in (0,\infty)$. Now for our model, $\rho_{\rm c}(\beta)$ is shown to be equal to the Green's function at the origin of a transformed self-interacting Brownian bridge, geometrically weighted with a crucial parameter, the radius of convergence. As a consequence, finiteness of the Green's function implies the existence of the aforementioned non-analyticity of the map \eqref{non-analytic} at $\rho_{\rm c}(\beta)$.

Theorem~\ref{thm-Gammak} is our second main result, where we determine circumstances under which the aforementioned phase transition does occur. Indeed, we show that for $d\geq 5$ and the interaction potential $v$ remaining sufficiently small, the critical density $\rho_c(\beta)$ is actually {\it finite}. The key idea here is the observation that the self-interacting Brownian bridge lies in the same universality class as  the famous {\em self-avoiding walk (SAW)}, the uniform distribution on $N$-step nearest-neighbour random walks on $\Z^d$ starting at zero and hitting no site twice. Indeed, since the pair functional $v$ is non-negative, the interaction $\sum_{1\leq i<j \leq k}V(B_{i},B_{j})$ repels each two legs from each other, in the spirit of a weak version of the SAW, the {\em self-repellent random walk}. One main difference to the present model of the interacting quantum Bose gas is that here it is {\em paths} that undergo a mutual repellence, not endpoints. 

The restrictions to $d\geq 5$ and small $\beta$ are due to a continuous version of the {\it lace expansion method} that we develop and employ in the proof of Theorem~\ref{thm-Gammak}. This part is a modification of the method developed in \cite{BHK}. We refer to Section~\ref{sec-laxeexp} for a short survey on this method, and to Section~\ref{sec-BEC-SAW} for precise conjectures about the behaviour of the SAW and models in its universality class. Surprisingly, these conjectures imply  the finiteness of the Green's function (i.e., the occurrence of the phase transition in our model) even in dimension $d=2$, not only in dimensions $d\geq 3$, being aligned with conjectures about the BEC phase transition. 

Let us remark that there is a recent similar investigation on a closely related model, the {\it interchange model}, which is conjectured to show a rather similar phase transition. Indeed, in \cite{ES22}, it is shown that ``infinite cycles'' appear in dimensions $d\geq 5$ if the inverse temperature is picked large enough. The similarities to our work are obvious, but our approach and methods are pretty different to the ones of \cite{ES22}.

\subsection{Main Results.}\label{sec-Results}

\noindent The formulation of our results will require setting up some further notation, which will subsequently be used in the entire sequel. As before, $\mathcal C$ will denote the space of continuous functions $[0,\beta]\to \R^d$. The canonical Brownian bridge measure is defined for any measurable $A\subset \Ccal$ as 
\begin{equation}\label{nnBBM}
\mu^{\ssup \beta}_{x,y}(A)=\frac{\P_x(B\in A;B_\beta\in\d y)}{\d y}.
\end{equation}
Its total mass is given by 
\begin{equation}\label{def-Gaussian}
\begin{aligned}
\mu_{x,y}^{\ssup \beta}(\Ccal) =\varphi_{\beta}(x,y) &:=\frac {\P_x(B_\beta\in\d y)}{\d y} =(2\pi\beta)^{-d/2}{\rm e}^{-\frac 1{2\beta}|x-y|^2}, 
\end{aligned}
\end{equation}
where $B=(B_t)_{t\in[0,\beta]}$ is a Brownian motion in $\R^d$ with generator $\frac 12\Delta$, starting from $x$ under $\P_x$. Hence, the total mass of $\mu_{0,0}^{\ssup{k\beta}}$ is equal to $(2\pi\beta k)^{-d/2}$.

We fix a bounded mesurable function $v\colon [0,\infty) \to [0,\infty)$ with compact support. Let us set 
\begin{equation}\label{Gammadef}
\begin{aligned}
\Gamma_k &=\mu_{0,0}^{\ssup{k\beta}}\Big[{\rm e}^{-\sum_{1\leq i<j\leq k}V(B_i,B_j)}\Big] \\
&=\E_{0}\Big[{\rm e}^{-\sum_{1\leq i<j\leq k}V(B_i,B_j)}\,\1\{B(k\beta)\in\d x\}\Big]\Big/\d x\Big|_{x=0},
\end{aligned}
\end{equation}
with $V(\cdot,\cdot)$ defined in \eqref{Vdef}, compare to \eqref{Gammabcdef}. Then the sequence $(\Gamma_k (2\pi\beta k)^{d/2})_{k\in \N}$ is sub-multiplicative. Consequently, by Fekete's lemma, the limit (known as the {\em connective constant}) 
\begin{equation}\label{lambdadef}
\begin{aligned}
&\lambda_{\rm c}(\beta)=\lim_{k\to\infty}\Gamma_k^{-1/k} \quad \mbox{exists and satisfies }\\
& \lambda_{\rm c}(\beta)^{k}\Gamma_k\geq (2\pi\beta k)^{-d/2} \quad\forall k\in \N.
\end{aligned}
\end{equation}
$\lambda_c(\beta)$ is the radius of convergence of the power series with coefficients $\Gamma_k$. It will turn out to capture the limiting behaviour of the coefficients $\Gamma_{\L_N,k}^{\ssup{\rm bc}}$ of the model. Finally, introduce 
\begin{equation}\label{Idef}
I(p)=\sum_{k\in\N}p_k\log\frac{p_k k}{\Gamma_k \e},\qquad p=(p_k)_{k\in\N}\in[0,\infty)^\N.
\end{equation}
The function $I$  will turn out to play the role of a large-deviation rate function -- it is the relative entropy of $p$ with respect to the sequence $(\Gamma_k/k)_{k\in\N}$ plus the sum of the $\Gamma_k/k$.
 
Here is the identification of the free energy $f(\beta,\rho)$ defined in \eqref{freeenergymodified}, which is the first main result of our article.

\begin{theorem}[Free energy and cycle lengths]\label{thm-limfree} Fix $\beta,\rho\in (0,\infty)$ and consider the model defined in \eqref{cycleNew} and \eqref{PNNewdef} in the centred box $\L_N$ with volume $N/\rho$ with Dirichlet boundary conditions ${\rm bc}\in \{ {\rm Dir}\}$. Then the following hold.
\begin{enumerate}
\item[(i)] With $\lambda_({\rm c}\beta)$ defined in \eqref{lambdadef}, the limiting free energy $f(\beta,\rho)$ defined in \eqref{freeenergymodified}
exists and is identified as
\begin{equation}\label{freeenergyident}
f(\beta,\rho)= \inf_{p\in[0,\infty)^\N\colon \sum_k kp_k\leq\rho}\Big[I(p)+\Big(\rho-\sum_{k\in\N}k p_k\Big)\log \lambda_{\rm c}(\beta)\Big].
\end{equation}

\item[(ii)] Let $(L_k)_{k\in\N}$ be a random variable under ${\tt P}^{\ssup N}_{\beta,\L_N}$, then the distribution of $(L_k/|\L_N|)_{k\in \N}$ satisfies a large-deviations principle (LDP) \footnote{We recall that a sequence $(X_N)_{N\in\N}$ of random variables taking values in a topological space $\mathcal X$ satisfies an LDP on the scale $\gamma_N$ with rate function $J\colon \mathcal X\to[0,\infty]$ if $J$ is lower-semicontinuous and 
for any open subset $G$ and any closed subset $F$ of $\mathcal X$,
$$
\liminf_{N\to\infty}\frac 1 {\gamma_N}\log\P(X_N\in G)\geq -\inf_G J\qquad\mbox{and}\qquad\limsup_{N\to\infty}\frac 1 {\gamma_N}\log\P(X_N\in F)\leq -\inf_F J.
$$
If $J$ has a unique minizer $x^*\in\mathcal X$, then it follows that $X_N$ converges weakly towards $x^*$. We refer to \cite{DZ98} for the general theory of large deviations.}
on the set $\mathcal X_\rho=\{p\in[0,\infty)^{\N}\colon\sum_{k\in\N}kp_k\in[0,\rho]\}$ on the scale $|\L_N|$ with rate function $J$ given by
\begin{equation}\label{Jdef}
J(p)=I(p)+\Big(\rho-\sum_{k\in\N}k p_k\Big)\log \lambda_{\rm c}(\beta)- f(\beta,\rho).
\end{equation}

\item[(iii)] Put   
\begin{equation}\label{critdens}
\rho_{\rm c}(\beta)=\sum_{k\in\N}\lambda_{\rm c}(\beta)^{k}\Gamma_k\in[0,\infty].
\end{equation}
Then the unique minimizer $p^*$ of the formula on the right-hand side of \eqref{freeenergyident} is given as 
\begin{equation}\label{pkELequation}
p^*_k=\frac 1k \lambda_{\rm c}(\beta)^{k}\Gamma_k \times\begin{cases} \e^{-c (\rho) k},&\mbox{if }\rho\leq \rho_{\rm c}(\beta),\\
1&\mbox{if } \rho>\rho_{\rm c}(\beta),
\end{cases}\qquad k\in\N,
\end{equation}
where $c(\rho)\in[0,\infty)$ is such that 
$$
\sum_{k\in\N}\lambda_{\rm c}(\beta)^{k}\Gamma_k \e^{-c (\rho) k}=\rho.
$$
 In particular, $\rho\mapsto c(\rho)$ is strictly decreasing with $c(\rho_{\rm c}(\beta))=0$. Furthermore, $\sum_{k\in\N}k p^*_k=\rho\wedge\rho_{\rm c}(\beta)$ for any $\rho\in(0,\infty)$. 

\item[(iv)]
\begin{equation}\label{freeeenergyident2}
 f(\beta,\rho)=\rho \log\lambda_{\rm c}(\beta)-\begin{cases}\rho c(\rho)+\sum_{k\in\N}\frac 1k \Gamma_k\lambda_{\rm c}(\beta)^k\e^{-c(\rho) k}&\mbox{if }\rho\leq \rho_{\rm c}(\beta),\\
\sum_{k\in\N}\frac 1k \Gamma_k\lambda_{\rm c}(\beta)^k&\mbox{if }\rho>\rho_{\rm c}(\beta). 
\end{cases}
\end{equation}
In particular, $f(\beta,\cdot)$ has a phase transition (non-analyticity) in $\rho_{\rm c}(\beta)$ if this point is finite.
\end{enumerate}
\end{theorem}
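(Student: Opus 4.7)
The plan is to derive all four statements from a single large-deviation principle (LDP) for the empirical partition density $p_k^{\ssup N} := l_k/|\L_N|$ under ${\tt P}^{\ssup N}_{\beta,\L_N}$. Rewriting the log of the combinatorial weight in \eqref{cycleNew} with $l_k = |\L_N| p_k$ and invoking Stirling's formula gives
$$
\log\prod_{k\in\N}\frac{[|\L_N|\Gamma_{\L_N,k}^{\ssup{\rm Dir}}]^{l_k}}{l_k!\,k^{l_k}} = |\L_N|\sum_{k\in\N} p_k\log\frac{\Gamma_{\L_N,k}^{\ssup{\rm Dir}}\e}{p_k k} + o(|\L_N|),
$$
which, after replacing $\Gamma_{\L_N,k}^{\ssup{\rm Dir}}$ by its limit $\Gamma_k$, equals $-|\L_N| I(p) + o(|\L_N|)$. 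The hard constraint $\sum_k k l_k = N$ becomes $\sum_k k p_k = \rho$, but long cycles carry exponentially small weights $\Gamma_k \asymp \lambda_{\rm c}(\beta)^{-k}$ that are invisible in the limiting density profile. The effective constraint therefore relaxes to $\sum_k k p_k \leq \rho$, with the missing mass $\rho-\sum_k k p_k$ paying the connective-constant cost $\log\lambda_{\rm c}(\beta)$ per unit length. This produces the rate function $J$ of \eqref{Jdef}, and (i) and (ii) then follow once the LDP is established (the former by Varadhan's lemma applied to the trivial functional).

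To make this rigorous I would carry out three technical steps. First, for every fixed $k$, $\Gamma_{\L_N,k}^{\ssup{\rm Dir}} \to \Gamma_k$ as $N \to \infty$ by domain-monotone convergence of Dirichlet Brownian bridges, with the pointwise bound $\Gamma_{\L_N,k}^{\ssup{\rm Dir}} \leq \Gamma_k$ for every $k,N$ because the Dirichlet bridge measure is the free bridge measure restricted to paths staying inside $\L_N$. Second, Fekete's lemma applied to the sub-multiplicative sequence $(\Gamma_k(2\pi\beta k)^{d/2})_k$ together with \eqref{lambdadef} pins the exponential rate of $\Gamma_k$ to $\lambda_{\rm c}(\beta)^{-k}$ with only polynomial corrections. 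Third, a truncation argument: fix a cutoff $K$, split $p = p^{\leq K}+p^{>K}$, apply Stirling to the prefix and the connective-constant bound to the tail, then send $K\to\infty$ after $N\to\infty$. The LDP upper bound follows from the pointwise monotonicity $\Gamma_{\L_N,k}^{\ssup{\rm Dir}}\leq\Gamma_k$; the lower bound uses the pointwise convergence on finite prefixes combined with a diagonal exhaustion in $K$.

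Parts (iii) and (iv) reduce to a convex minimisation on the convex set $\{\sum_k k p_k\leq\rho\}$. The functional $p\mapsto I(p)+(\rho-\sum_k k p_k)\log\lambda_{\rm c}(\beta)$ is strictly convex, so the minimiser $p^*$ is unique. Stationarity with a Lagrange multiplier $c\geq 0$ for the inequality constraint yields $p_k^* = \Gamma_k\lambda_{\rm c}(\beta)^k\e^{-ck}/k$, and complementary slackness splits into the two cases of \eqref{pkELequation}: for $\rho\leq\rho_{\rm c}(\beta)$ the constraint is active and $c(\rho)\geq 0$ is defined implicitly by $\sum_k\Gamma_k\lambda_{\rm c}(\beta)^k\e^{-c(\rho)k}=\rho$, while for $\rho>\rho_{\rm c}(\beta)$ the constraint is slack, $c(\rho)=0$, and the excess mass contributes $(\rho-\rho_{\rm c}(\beta))\log\lambda_{\rm c}(\beta)$. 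Strict monotonicity and real-analyticity of $c\mapsto\sum_k\Gamma_k\lambda_{\rm c}(\beta)^k\e^{-ck}$ on $[0,\infty)$ give the claimed properties of $c(\rho)$, and substituting $p^*$ into \eqref{freeenergyident} yields \eqref{freeeenergyident2}. Non-analyticity at $\rho_{\rm c}(\beta)$ is then immediate: on $(\rho_{\rm c}(\beta),\infty)$ the function $f(\beta,\cdot)$ is affine with slope $\log\lambda_{\rm c}(\beta)$, whereas on $(0,\rho_{\rm c}(\beta))$ it is strictly convex with derivative $\log\lambda_{\rm c}(\beta)-c(\rho)$, so the two branches cannot be pieced together analytically.

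The principal obstacle is the uniform-in-$k$ control of $\Gamma_{\L_N,k}^{\ssup{\rm Dir}}$ in the critical regime $k\asymp|\L_N|^{2/d}$, where a bridge of length $k\beta$ has typical diameter comparable to the side-length of $\L_N$ and Dirichlet boundary conditions suppress a non-negligible fraction of paths, making $\Gamma_{\L_N,k}^{\ssup{\rm Dir}}$ substantially smaller than $\Gamma_k$. Fortunately only the LDP upper bound is sensitive to this suppression, and there the pointwise monotonicity $\Gamma_{\L_N,k}^{\ssup{\rm Dir}}\leq\Gamma_k$ is favourable. The lower bound only requires realising finitely-supported profiles, for which the pointwise convergence at fixed $k$ is enough; the missing mass $\rho-\sum_{k\leq K} k p_k^*$ can then be allocated to a few macroscopic cycles whose Dirichlet weight still carries the right exponential rate $\lambda_{\rm c}(\beta)^{-k}$ modulo polynomial boundary corrections. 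Quantifying these corrections --- showing that the Dirichlet boundary effects on macroscopic cycles contribute only $o(|\L_N|)$ on the log-scale --- is the main technical hurdle and will require precise heat-kernel estimates in $\L_N$.
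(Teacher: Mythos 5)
Your proposal follows essentially the same route as the paper: rewrite the log-weight via Stirling, truncate in cycle length, use the sub-multiplicativity of $(\Gamma_k(2\pi\beta k)^{d/2})_k$ and Fekete to pin the exponential rate, establish the LDP on $\mathcal X_\rho$, and then solve the constrained convex problem via KKT with complementary slackness. Parts (iii)--(iv) and the non-analyticity argument (affine versus strictly convex branches forbid analytic continuation) are all correct and mirror the paper's Section 3.3.

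One refinement worth noting on the lower bound of the LDP, which is where you flag the "main technical hurdle." You propose to dump the missing mass $\rho-\sum_{k\le K}kp_k$ into a few \emph{macroscopic} cycles and then control their Dirichlet corrections with "precise heat-kernel estimates." The paper instead allocates the surplus to cycles of a single \emph{intermediate} length $k_N=\lfloor N^{1/2d}\rfloor$, chosen so that $k_N\to\infty$ but $k_N=o(N^{1/d})$. Because $k_N\ll N^{1/d}$, a Brownian bridge of length $k_N\beta$ has typical diameter $\asymp N^{1/(2d)}\ll N^{1/d}$, so the probability of touching $\partial\Lambda_N$ is of order $\e^{-CN^{2/d}/k_N}$, which is \emph{super}-exponentially small compared with $\Gamma_{k_N}\asymp\lambda_{\rm c}(\beta)^{-k_N}$. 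This is exactly \eqref{Gammalowbound}, and it reduces the estimate to an elementary Gaussian tail bound rather than sharp boundary heat-kernel asymptotics. For truly macroscopic cycle lengths $k\asymp N$, the Dirichlet correction is of order $\e^{-cN^{1-2/d}}$ — exponentially small but \emph{not} polynomial — so the "polynomial boundary corrections" you mention would not accurately describe that regime; you would indeed need genuine heat-kernel control there, which the paper never has to invoke. Also, $L_{k_N}=A_{p,L,N}\approx(\rho-\sum kp_k)|\Lambda_N|/k_N$ diverges, so these are not "a few" cycles; the point is rather that $L_{k_N}/|\Lambda_N|\to 0$, making them invisible in the density profile while still carrying the residual mass, which is what makes the relaxed constraint $\sum kp_k\le\rho$ appear in the limit.

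With that adjustment the argument closes along exactly the lines you sketch; the substance and structure are the same as the paper's.
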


The proof of Theorem~\ref{thm-limfree} is provided in Section~\ref{sec-freeenerproof}. 
Let us make some remarks about the assertions appearing there. First note that the rate function of the LDP there has two terms, the entropic term $I$, which describes the statistics of the finite cycle lengths, and the energy term, $(\rho-\sum_k kp_k)\log\lambda_{\rm c}(\beta)$, which is the contribution from the condensate part. Unlike in the free Bose gas (see, e.g., \cite{KVZ23}), the condensate yields here an explicit contribution on the leading scale, but not an entropic one. While each cycle of length $k$ comes with an energetic term $\Gamma_k$, the condensate comes with the logarithm of the connective constant of the sequence $(\Gamma_k)_{k\in\N}$. The convergence of $(L_k/\L_N)_{k\in\N}$ towards the minimizer $p^*$ shows that the model has a phase transition at the critical density $\rho_{\rm c}(\beta)$ if this quantity is finite. In \eqref{critdens} we see that  $\rho_{\rm c}(\beta)$
is the Green's function of the transformed Brownian motion geometrically weighted with parameter $\lambda=\lambda_{\rm c}(\beta)$, taken at the origin, and $\lambda_{\rm c}(\beta)$ is the radius of convergence. If the Green's function is finite at this point, then the free energy $f(\beta,\cdot)$ is not analytic here, since all the coefficients in the series in \eqref{freeenergyident} are positive. 

Now the question of course arises, under what circumstances this phase transition occurs. We give a positive answer in our next main result.

\begin{theorem}[Phase transition in $d\geq 5$]
\label{thm-Gammak}
Assume that $d\geq 5$ and that the interaction potential $v$ (cf.\ \eqref{Gammabcdef} and \eqref{Vdef}) is bounded and continuous and has a bounded support. Let $\rho_{\rm c}(v, \beta)=\rho_{\rm c}(\beta)$ be the critical density defined in \eqref{critdens}. Then, for any $\beta\in(0,\infty)$, there is $\be_\beta>0$ such that $\rho_{\rm c}(\be v,\beta)<\infty$ for any $\be\in(0,\be_\beta]$. 
\end{theorem}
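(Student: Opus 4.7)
The plan is to reduce the finiteness of $\rho_{\rm c}(\be v,\beta) = \sum_{k\in\N} \lambda_{\rm c}(\beta)^k \Gamma_k$ to a pointwise Gaussian-type upper bound
\begin{equation*}
\lambda_{\rm c}(\beta)^k \,\Gamma_k \;\le\; C(\be,\beta)\, k^{-d/2}, \qquad k\in\N,
\end{equation*}
which, once established in $d\ge 5$, immediately gives $\rho_{\rm c}(\be v,\beta) \le C\sum_{k\ge 1} k^{-d/2} <\infty$. To produce this bound I would deploy a continuous-space version of the Brydges--Spencer lace expansion, adapting the weakly-self-avoiding-walk framework of \cite{BHK}. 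The starting observation is that $\Gamma_k$ is the diagonal two-point function of a Brownian motion whose legs interact pairwise via the bounded, compactly-supported, non-negative potential $V$, placing it in the universality class of the weakly self-avoiding walk. Note that one cannot obtain this directly from the trivial estimate $\Gamma_k \le (2\pi\beta k)^{-d/2}$ since $\lambda_{\rm c}(\be v,\beta)\ge 1$, so the competition between the geometric weight and the repulsion must be controlled simultaneously.

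Concretely, for $\lambda>0$ I would introduce the weighted two-point function
\begin{equation*}
G_\lambda(x) \;:=\; \sum_{k\in\N}\lambda^k\, \E_0\Big[{\rm e}^{-\be\sum_{1\le i<j\le k}V(B_i,B_j)}\,\1\{B(k\beta)\in\d x\}\Big]\Big/\d x,
\end{equation*}
so that by \eqref{critdens} and \eqref{Gammadef} one has $\rho_{\rm c}(\be v,\beta) = G_{\lambda_{\rm c}(\beta)}(0)$. Writing $U_{ij} := 1 - \exp(-\be V(B_i,B_j)) \in [0,1]$ and expanding the product $\prod_{i<j}(1-U_{ij})$ via the Brydges--Spencer inclusion--exclusion indexed by connected laces on $\{1,\dots,k\}$, then integrating out the free Brownian pieces between successive interaction vertices, one derives a convolution identity
\begin{equation*}
G_\lambda(x) \;=\; C_\lambda(x) \,+\, (\Pi_\lambda \ast G_\lambda)(x),
\end{equation*}
where $C_\lambda(x) := \sum_{k\in\N}\lambda^k \varphi_{k\beta}(0,x)$ is the free weighted Brownian propagator and $\Pi_\lambda$ is an explicit signed series over irreducible lace diagrams whose edges carry the kernels $U_{ij}$ and whose vertices are joined by free Brownian legs of duration $\beta$. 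This is the direct continuous-time analogue of the discrete SAW lace expansion.

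For $\be$ small, I would then run the standard lace-expansion bootstrap on suitable Fourier-side norms of $\widehat G_\lambda$ against $\widehat C_\lambda$, uniformly in $\lambda \in (0,\lambda_{\rm c}(\beta)]$. The diagrammatic bounds on $\Pi_\lambda$ reduce, by factorisation of Brownian increments, to the finiteness of free loop integrals built from $C_{\lambda_{\rm c}(\beta)}$, chiefly the triangle $(C_{\lambda_{\rm c}(\beta)}^{\ast 3})(0)$ and a small number of further few-loop diagrams. Since $C_{\lambda_{\rm c}(\beta)}(x) \asymp |x|^{-(d-2)}$ at criticality, all the required diagrams converge in $d\ge 5$, and boundedness plus compact support of $v$ ensures that each $U_{ij}$ is pointwise at most $\be\|v\|_\infty \beta$ and is effectively supported on pairs of legs staying close to one another, so each additional lace edge contributes a factor proportional to $\be$. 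For $\be$ sufficiently small the bootstrap closes and yields $\widehat G_{\lambda_{\rm c}(\beta)}(\xi) \le C(1+\beta|\xi|^2)^{-1}$; Fourier inversion together with the convolution identity then delivers the claimed pointwise bound $\lambda_{\rm c}(\beta)^k\Gamma_k \le C k^{-d/2}$, and summation completes the proof.

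The main obstacle is the rigorous continuous-setting implementation of the lace expansion and its diagrammatic estimates: the classical SAW bounds exploit one-step simple-random-walk interactions, whereas here each lace edge $U_{ij}$ must be controlled by $\be\int_0^\beta v(|B_i(s)-B_j(s)|)\,\d s$ and then integrated against the Brownian transition densities along legs of duration $\beta$. Reworking the diagrammatic machinery of \cite{BHK} in this continuous-time, continuous-space setting, while verifying that the bootstrap hypotheses can be propagated uniformly in $\lambda\le\lambda_{\rm c}(\beta)$ for small $\be$, is the technical heart of the proof.
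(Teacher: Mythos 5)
Your proposal correctly identifies the global strategy that the paper follows: treat $\Gamma_k$ as the two-point function of a continuous weakly-self-avoiding walk, run a lace expansion indexed by irreducible graphs on $\{1,\dots,k\}$, and close a bootstrap for small coupling $\be$ in $d\ge 5$ using convergence of few-loop free diagrams. The convolution identity you write down is equivalent to the paper's $(I-\lambda\varphi-G^{\ssup\Pi}_\lambda)\star(I+G^{\ssup\Gamma}_\lambda)=I$.

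However, there is a genuine gap in your closing step, and a substantive divergence in how the bootstrap is run. You claim the bootstrap yields $\widehat G_{\lambda_{\rm c}(\beta)}(\xi)\le C(1+\beta|\xi|^2)^{-1}$ and then that ``Fourier inversion $\ldots$ delivers $\lambda_{\rm c}(\beta)^k\Gamma_k\le Ck^{-d/2}$''. Two problems. First, the stated Fourier bound is not integrable for $d\ge 2$, so it does not by itself give $G_{\lambda_{\rm c}}(0)<\infty$; one needs a bound reflecting the Gaussian large-$|\xi|$ decay of the free kernel $\widehat C_{\lambda_{\rm c}}(\xi)=\lambda_{\rm c}\widehat\varphi(\xi)/(1-\lambda_{\rm c}\widehat\varphi(\xi))$, e.g.\ $\widehat G\le C\widehat C_{\lambda_{\rm c}}$. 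Second, and more seriously, a generating-function bound at the critical point does not translate into the coefficientwise estimate $\lambda_{\rm c}^k\Gamma_k\le Ck^{-d/2}$ by Fourier inversion alone; extracting individual $k$-coefficients requires extra machinery (fractional derivatives, contour arguments, or an inductive approach as in van der Hofstad--Slade), and this is the technical heart of the reference \cite{ABR16} that you are leaning on. Neither of these issues is addressed, and both are nontrivial.

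The paper sidesteps both difficulties by running a pointwise, space-side bootstrap that never needs a coefficientwise bound. It introduces the smoothed ratio $f_\be(\lambda):=\sup_x(G^{\ssup\Gamma}_{\be,\lambda}\star\varphi)(x)/G(x)$ and proves (Lemma~\ref{Le_0}) that $\lambda\mapsto f_\be(\lambda)$ is continuous and increasing on $[0,\lambda_{\rm cr}(\be))$, then (Proposition~\ref{Prop_main}) that for small $\be$ it cannot take values in $(2,3]$; since $f_\be\le1$ near $\lambda=0$, continuity forces $f_\be\le2$ up to criticality, whence $G^{\ssup\Gamma}_{\be,\lambda_{\rm cr}}\le 5G$ pointwise and in particular $\rho_{\rm c}=G^{\ssup\Gamma}_{\be,\lambda_{\rm cr}}(0)<\infty$. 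The lace expansion (Lemma~\ref{Le_1}) produces the space-side bound $|G^{\ssup\Pi}_{\be,\lambda}(x)|\le C\be(1+|x|)^{6-3d}$ under the a priori hypothesis $G^{\ssup\Gamma}_\lambda\le 7G$, and the deconvolution (Lemma~\ref{Le_2}) is carried out in the Banach algebra with norm $\|f\|=\max\{\|f\|_1,\|f\|_\infty,\sup_x|x|^d|f(x)|\}$, using a Neumann series rather than Fourier-side norms. If you want to pursue the Fourier-side route you sketch, you would need to (i) formulate the bootstrap so the bound at criticality retains Gaussian decay for large $|\xi|$ and then integrate it, rather than aiming for a coefficientwise bound, and (ii) address the uniformity in $\lambda\uparrow\lambda_{\rm cr}(\be)$ explicitly; both are precisely the places where the paper's space-side formulation is cleaner.
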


The proof of Theorem~\ref{thm-Gammak} is spanned through Section~\ref{sec-ProofBEC} -- Section \ref{sec-technical-est}. As we will explain in Section~\ref{sec-background}, the restriction to $d\geq 5$ 
is related to the 
question that lies at the heart of a famous and fundamental question that is notoriously difficult to answer in 
dimensions $d\in\{2,3,4\}$: the question whether or not the Green's function of the weakly self-avoiding walk is finite at the critical point.

The organization of the remainder of this paper is as follows. In Section~\ref{sec-background} we discuss several aspects of our main results, in Section~\ref{sec-freeenerproof} we prove Theorem~\ref{thm-limfree}, in Section~\ref{sec-ProofBEC} we give the proof of Theorem~\ref{thm-Gammak}, subject to the proof of two crucial results that we prove in Sections~\ref{Le_1_proof} and \ref{Le_2_proof}, respectively. In Section~\ref{sec-technical-est}, we prove some technical estimates, which are used in the two sections preceding to that. 


\section{Background and Discussion}\label{sec-background}

In this section we discuss the background, proof method and conclusions of our main results. More concretely, in Section~\ref{sec-BoseModel} we explain the relation between the well-known interacting Bose gas and the model that we study in this paper, in Section~\ref{sec-BEC-SAW} we summarize what is known and what is conjectured about the weakly self-avoiding walk, and we draw conjectural conclusions about the occurrence of the phase transition in those dimensions that we do not handle in this paper. Some elements and outline of our main proof method for Theorem~\ref{thm-Gammak} is presented and commented on in Section~\ref{sec-laxeexp}. Finally, we comment on our model from the viewpoint of random permutations in Section~\ref{sec-RandPerm}, and in Section~\ref{sec-bc} we comment on the role of the boundary conditions carried in our model.

\subsection{The Interacting Bose Gas.}\label{sec-BoseModel}

\noindent The model that we are interested in and described in Section \ref{sec-model} is strongly inspired by 
an investigation of an interacting quantum gas at positive temperature $1/\beta \in (0,\infty)$ in the thermodynamic limit, the setup for which
can be briefly described as follows. As before, let $\Lambda\subset \R^d$ be a centered box and 
$$
\Hcal^{\ssup{\rm {bc}}}_{N,\L}=-\frac 12\sum_{i=1}^N\Delta_i+\sum_{1\leq i<j\leq N} v(|x_i-x_j|),\qquad x_1,\dots,x_N\in\Lambda, 
$$
is the Hamiltonian operator for $N$ particles located at $x_1,\dots,x_N$ in $\L\subset\R^d$ with a prescribed boundary condition.  
Again $v\colon [0,\infty)\to[0,\infty)$ is some pair-interaction functional, which is assumed to be bounded with bounded support.  We are interested in {\it bosons}, and in this vein a key object of interest is the {\em symmetrized trace} 
\begin{equation}\label{defpartition}
\mathbf Z_N^{\ssup{\rm bc}}(\beta,\L):={\rm Tr}_{+}\big(\e^{-\beta\Hcal^{\ssup{\rm {bc}}}_{N,\L}}\big), \qquad \beta\in (0,\infty),
\end{equation}
where the index $+$ stands for symmetrization -- that is, application of the projection operator on the set of all permutation invariant wave functions. 

We are going to rewrite this trace in terms of many interacting Brownian bridges, which is a well-known manipulation since the early 1970s and builds on Feynman's intuition from 1953.
We refer to \cite[Lemma~2.1]{ACK10} for a proof and further details for the following, and to \cite{U06b} for a non-technical and physics-related introduction to the rewrite of the Bose gas in terms of  Brownian bridges. 

Using well-known trace formulas \cite{G70}, \eqref{defpartition} can be rewritten in probabilistic terminology using {\it Brownian bridges} as 
\begin{equation}\label{bfZNBrowBrid}
\mathbf Z_N^{\ssup{\rm bc}}(\beta,\L)=\frac{1}{N!}\sum_{\sigma\in\Sym_N}\int_{\L^N}\d x_1\cdots\d x_N\bigotimes_{i=1}^N {\mu_{x_i,x_{\sigma(i)}}^{\ssup{\rm bc,\beta}}}\Big[\exp\Big\{-\sum_{1\le i<j\le N}V(B^{\ssup{i}},B^{\ssup{j}})\Big\}\Big].
\end{equation}
Next, it is well-known that each permutation $\sigma\in \Sym_N$ can be decomposed into cycles -- that is, each $\sigma$ can be represented by a partition of $N$. 
For each $\sigma$ there is an $l\in{\mathfrak P}_N$ such that $\sigma$ consists of precisely $l_k$ cycles of length $k$, for any $k$. Using the Markov property of the Brownian motion (or, equivalently, the semigroup property  of the family of Brownian bridge measures), we can identify each sequence of legs that lie within one cycle of length $k$ as one Brownian bridge starting and ending at the same site and having the time interval $[0,\beta k]$. Hence, for any $k\in \N$, $\sigma$ gives rise to $l_k$ bridges of length $k$, labeled 
\begin{equation}\label{def-B-klk}
B^{\ssup{k, 1}},\dots,B^{\ssup{k, l_k}},
\end{equation}
and any bridge $B^{\ssup{k,i}}$ has the legs
\begin{equation}
B^{\ssup{k,i}}_j=(B_j^{\ssup{k,i}}(s))_{s\in[0,\beta]}=(B^{\ssup{k,i}}((j-1)\beta+s))_{s\in[0,\beta]}.
\end{equation}
Then, writing $[m]=\{1,\dots,m\}$, our ensemble of legs is the collection 
$$
\bigg\{(B_j^{\ssup{k,i}})_{k,i,j}\colon k\in [N],\,\, i\in[l_k],\,\,j\in\ [k]\bigg\}
$$
sampled under the measure $\bigotimes_{k\in\N}(|\Lambda|\mu_{\Lambda}^{\ssup{{\rm bc},k\beta}})^{\otimes l_k}$. Furthermore, observe that, for any $l\in\mathfrak P_N$, the number of permutations of $1,\dots,N$  such that $l_k$ is the number of its cycles of length $k$ for any $k\in\N$ is equal to $N!/\prod_{k\in\N} l_k! \,k^{l_k}$ \cite[Th.~12.1]{C02}. As a result, the partition function of the interacting Bose gas may be written
\begin{equation}\label{partfctBose}
\sum_{l\in\mathfrak{P}_N}  \Big(\prod_{k\in\N}\frac{|\L|^{l_k}}{l_k!\,k^{l_k}}\Big)\Big[\bigotimes_{k\in\N}\big(\mu_{\L}^{\ssup{{\rm bc},k\beta}}\big)^{\otimes l_k}\Big]\big[{\rm e}^{-{G}_{N,\beta}}\big],
\end{equation}
where  the entire interaction can be written as 
\begin{equation}\label{oldinteraction}
{G}_{N,\beta}= \sum_{(k_1, i_1, j_1)\neq (k_2, i_2, j_2)} V\big(B^{\ssup{k_1, i_1}}_{j_1},B^{\ssup{k_2, i_2}}_{j_2}\big).
\end{equation}
Hence, in the standard form of the interacting Bose gas, every leg of every bridge interacts with any other leg of a bridge. If we drop all interactions between two legs of different bridges, that is, if we replace ${G}_{N,\beta}$ on the right-hand side of \eqref{partfctBose} by the interaction
\begin{equation}\label{GWNewdef}
H_{N,\beta}=\sum_{k =1}^N \sum_{i =1}^{l_{k}}\sum_{1\leq j_1<j_2\leq k} V\big(B^{\ssup{k, i}}_{j_1},B^{\ssup{k, i}}_{j_2}\big), 
\end{equation}
then, for this interaction, the expectation over the product of the $\mu_{\L}^{\ssup{{\rm bc},k\beta}}$ decomposes into a product of expectations, and we see that in \eqref{partfctBose}, the partition function of our model, $Z_N^{\ssup{\rm bc}}(\beta,\L)$, defined in \eqref{cycleNew} arises. This is our motivation to study this model. Although different cycles do not interact and it is a simplification compared to the scenario of the interacting Bose gas, the self-interactions within each cycle already manifest into an interesting phase transition, as highlighted by Theorem \ref{thm-limfree} and Theorem \ref{thm-Gammak},  confirming the conjecture that we will now be discussing in 
Section~\ref{sec-BEC-SAW}.

\subsection{Self-repellent Random Motions.}\label{sec-BEC-SAW}

\noindent In our model \eqref{cycleNew} of a partially interacting Bose gas, it turned out in Theorem~\ref{thm-limfree} that the occurrence of a condensation phase transition is equivalent to the summability of $\lambda_{\rm c}(\beta)^k\Gamma_k$ on $k\in\N$. This in turn is equivalent to the finiteness of the Green's function at zero for the corresponding self-repellent random path model in the critical point. This model is of the type of the weakly self-avoiding walk, however, for a Brownian bridge instead of a free (i.e., open-end) path, and with a different type of mutually repellent interaction, which has not yet been studied in the literature on self-interacting path models. Indeed, the interaction $\sum_{1\leq i<j\leq k}V(B_i,B_j)$ for a Brownian motion $B$ is in the same spirit as the interaction $H_k=\sum_{1\leq i<j\leq k}\1\{S_i=S_j\}$ (the {\em self-intersection local time}) for a random walk $(S_i)_{i\in\N_0}$ in $\Z^d$, since $V>0$. The two  differences between these two models are that (1) precise intersections do almost surely not appear for the Brownian-motion model, and (2) the entire leg $B_i\colon[0,\beta]\to\R^d$ is involved in the interaction, not only its endpoint $B_i(\beta)$. But we find it obvious that both models should lie in the same universality class of transformed path measures. Hence let us briefly summarize what is known for the weakly self-avoiding walk.

See \cite{S11} for a brief survey on the weakly self-avoiding walk, the random walk measure on $k$-step nearest-neighbour paths $(S_0,S_1,\dots,S_k)$ in $\Z^d$ starting from the origin with probability weight $\exp\{-\be \sum_{1\leq i<j\leq k}\1\{S_i=S_j\}\}$ and partition function $Z_k$. We restrict to dimensions $d\geq 2$. The main conjecture is that the expected end-to end distance $|S_k-S_0|$ after $k$ steps should be of size $\approx k^{\nu_d}$, with $\nu_d\in[\frac 12,1]$ a critical power that is believed to be $\nu_2=\frac 34$, $\nu_3\approx 0.5876$, and $\nu_d=\frac 12$ in $d\geq 4$, with logarithmic corrections in $d=4$. 
Furthermore, $Z_k$ is  conjectured to be $\asymp \lambda^{-k} k^{\gamma_d -1}$ for some $\gamma_d$. The powers $\nu_d$ and $\gamma_d$ are believed to be universal, i.e., not to depend on details of the underlying random walk. Their conjectured values are $\gamma_2=\frac{43}{32}$ and $\gamma_3\approx 1.1568$ and $\gamma_d=1$ for $d\geq 4$ with logarithmic corrections in $d=4$. This implies that the partition  function in the critical point should satisfy $Z_k\approx \lambda_{\rm c}^{-k} k^{\gamma_d-1}$ as $k\to\infty$, where $\lambda_{\rm c}$ is the critical parameter (the connective constant, defined analogously to \eqref{lambdadef}).

Conjectures about the finiteness of the Green's function in the critical point can be deduced from the above conjectures. Recall that the prediction is that $|S_k|\approx k^{\nu_d}$ and $Z_k\approx \lambda^{-k} k^{\gamma_d-1}$ as $k\to\infty$, where we remark that $Z_k$ is the experctation of $\e^{-\be H_k}$ for the random walk with a free end, while $\Gamma_k$ is the same for the random walk bridge. Assuming even that $S_k k^{-\nu_d}$ converges in distribution under the (free-end) weakly self-avoiding walk model towards some non-degenerate variable (and neglecting the logarithmic corrections that are expected in $d=4$), we would then obtain that
$$
\Gamma_k \lambda^k\approx k^{\gamma_d-1}\frac 1{Z_k}\E\big[\e^{- \be H_k}\1_{\{S_k k^{-\nu_d}\in\d x\}}\big]/\d x|_{x=0}\approx k^{\gamma_d-1-d\nu_d}.
$$
That is, if the right-hand side is summable, then the model with bridge instead of free end should have a finite Green's function in the critical point. This is the case precisely if  $\gamma_d<d\nu_d$. Surprisingly, this should be true in any dimension $d\in\{2,3,4\}$, also for $d=2$. 

Hence, we conjecture that the condensation phase transition appears in our model in \eqref{cycleNew} also in $d=2$. This is surprising, since in the original interacting quantum Bose gas (see Section~\ref{sec-BoseModel}) it is generally conjectured that the phase transition exists only in dimensions $d\geq 3$, but not in $d=2$. For the two-dimensional free (i.e., non-interacting) quantum Bose gas, the non-existence of the phase transition relies on the non-summability of $1/k$, which is \lq just at the boundary\rq\ of the validity of this statement.

\subsection{Lace Expansion.}\label{sec-laxeexp}

\noindent Let us comment on the method that we use in the proof of Theorem~\ref{thm-Gammak}, the {\em lace expansion}. This is a diagrammatic expansion of the exponential interaction term $\e^{-\be \sum_{1\leq i<j\leq k}\1\{S_i=S_j\}}$ in terms of a procedure that reminds of the inclusion-exclusion principle from basic probability theory. Since diagrammatic illustrations of the method remind of the laces on the edge of a tablecloth, this method got its name ``lace expansion''. It was introduced in the mid-eighties by Brydges-Spencer \cite{BS85} for weakly self-avoiding walks in dimension $d>4$ , proving a central limit theorem, if the interaction parameter
$\alpha>0$ is small enough. The method was later extend in many works to different fields,
for instance to percolation in sufficiently high dimensions see \cite{vdHS02,HvdHS03}, and also in \cite{HS92} to strictly self-avoiding
random walks in dimensions $5$ and above, see also the recent article \cite{S22}.

The method is perturbative and needs a simple, but non-trivial model to expand around, which is most often the simple random walk. Since (at least conjecturally) the weakly self-avoiding walk is similar to the simple random walk only in dimensions $d\geq 5$, the method has been successful only in these dimensions, leading to rather strong assertions, among which are the following: the endpoint satisfies a central limit theorem, 
i.e., the distribution of $S_k/\sqrt k$ under the transformed measure converges towards a Gaussian distribution with an appropriate non-trivial variance. Furthermore, the Green's function is finite in the critical parameter value $\lambda_{\rm c}$.

Besides many other extensions, in recent years the lace expansion was extended to models in continuous space, where self-intersections almost surely do not occur. In \cite{ABR16}, which gave us the leading idea for the proof in the present paper, the following model is successfully handled in this vein. Here the energy $V(f,g)$ defined in \eqref{Vdef} is replaced by 
\begin{equation}\label{VSAWdef}
V_{\rm SAW}(f,g)=\be\1\{|f(\beta)-g(\beta)|\leq r\},\qquad f,g\in\Ccal,
\end{equation}
where $r,\be>0$ are parameters, with a free end of the motion. Then the model is the weakly self-avoiding walk for a Gaussian random walk on $\R^d$, whose steps have mean zero and variance $\beta$. In \cite{ABR16}, it was proved that some result in the vicinity of the local central limit theorem holds:
\begin{equation}\label{LCLT}
\lim_{k\to\infty}\frac 1{Z_k} \E_0\Big[\e^{-\be \sum_{1\leq i<j\leq k}V(S_i,S_j)}\,\1\{k^{-1/2}S_k \approx x\}\Big]=g_D(0,x),\qquad x\in \R^d \setminus U,
\end{equation}
for sufficiently small $\be>0$ and some neighbourhood $U$ of $0$, where $Z_k$ is the partition function, $g_D$ is the Gaussian transition kernel with $D=D_\be>0$ a suitable diffusion constant, and ``$\approx$'' needs to be adjusted to a proper transition from discrete to continuous space.
Along the proof, it was also shown that $Z_k\sim C\lambda^{-k}$ for some $C,\lambda>0$, implying of course that $\lambda=\lambda_{\rm c}(\gamma)$ is the critical parameter.  For $x=0$, we have only ``$\leq$'' in \eqref{LCLT}, which says that $\Gamma_k\leq O(\lambda_k^{-k} k^{-d/2})$, which implies that $\Gamma_k\lambda^k$ is summable, i.e., the Green's function in the critical point is finite. This implies that Theorem~\ref{thm-Gammak} holds for this model for sufficiently small interaction parameter $\be$. 

The novelty of the present paper is to extend it to the interaction $V$ defined in \eqref{Vdef}. This is a substantial non-trivial step 
since the interaction depends now on the {\it entire leg} $B_i$, not only on its endpoint $B_i(\beta)$. 
The new ideas needed in the present context, for which we will draw on the lace expansion method using Banach algebras introduced in \cite{BHK}, will be discussed along the way of proving Theorem \ref{thm-Gammak}.

\subsection{Random Permutations.}\label{sec-RandPerm}

\noindent The model that we study in this paper is a probability measure on the set $\mathfrak P_N$ of all integer partitions of $N$ of the form 
\begin{equation}\label{measuredef}
{\tt P}_N(l)=\frac1{Z_N}\prod_{k\in\N}\frac{[\theta^{\ssup N}_k]^{l_k}}{l_k! \,k^{l_k}},\qquad l=(l_k)_{k\in\N}\in\mathfrak{P}_N.
\end{equation}
Here $Z_N$ is the normalisation constant that turns ${\tt P}_N$ into a probability measure, and $\theta_k^{\ssup N}\in(0,\infty)$ are positive weights. Such measures are also known as models of {\em random permutations}, i.e., as a probability measure on the set $\mathfrak S_N$ of all permutations $\sigma$ of $1,\dots,N$, giving the weight $\theta_k^{\ssup N}$ to any cycle of $\sigma$ with length $k$, independently over all cycles.  Note that one can freely multiply $\theta_k^{\ssup N}$ by a factor $\e^{\alpha k}$ for fixed $k$ without changing ${\tt P}_N$, since this factor drops out in the normalisation.

For $\theta_k^{\ssup N}=\theta_k$ not depending on $N$, such distributions have been studied in recent years for many choices of the weights $\theta_k$, and various limiting regimes and limiting distributions have been identified and studied in great detail, producing a rich picture. This line of research goes under the name  {\em random (spatial resp.~geometric) permutations} and was initiated in \cite{BUe09} as a simplified model for the interacting Bose gas (it appears after interchanging the logarithm with a certain sum in the partition function). See \cite{RZ20} for one of the latest contributions and a host of references on this line of research, which has departed considerably from the spirit of the interacting Bose gas in the last decade.

The weights that we study in this paper, are roughly of the form
\begin{equation}\label{thetachoice}
\theta_k^{\ssup N}= N (\gamma_{N,k}+o(1)),\qquad N\to\infty, k\in\N,
\end{equation}
with some $\gamma_{N,k}\in(0,\infty)$, whose behaviour as $N\to\infty$ is {\it a priori}  not easy to determine. Indeed, for fixed $k$, it converges, as $N\to\infty$ towards $\gamma_k=\frac 1\rho \Gamma_k$ defined in \eqref{Gammadef}, and this is also true for $k\ll N^{2/d}$, but for $k\gg N^{2/d}$, other effects emerge in the asymptotics, and a proof of the necessary asymptotics for deriving an LDP are not so easy to get, and actually in the case of periodic boundary condition we do not. Note that $\Gamma_k$ is exponentially fast in $k$, but this exponential terms drops out of the question about the phase transition, and the second term is decisive.  For the free Bose gas, $\gamma_k=\frac 1\rho (2\pi\beta k)^{-d/2}$ with $\rho,\beta\in(0,\infty)$ are the particle density and inverse temperature.

\subsection{Influence of Boundary Conditions.}\label{sec-bc}

\noindent The technical difficulties in deriving an LDP in Theorem \ref{thm-limfree} stem from the precise form of $\gamma_{N,k}$ in \eqref{thetachoice}, which depend on the boundary conditions that we use in our interacting Bose gas model. 

The by far most often used condition (in the standard interacting Bose gas and in practically all variants) is the periodic one, which is a mathematical idealisation that gives simpler formulas and less technicalities. Many papers on the free Bose gas (e.g. \cite{S93, S02, BCMP05}) concentrate either on this choice or on the even simpler {\it free boundary conditions}, where the starting and termination sites of the bridges stay inside the box, but not necessarily the entire path (this does not come from a quantum mechanical model). However, for zero boundary condition,  \eqref{thetachoice} is actually not satisfied in the free Bose gas for $k$ coupled with $N$, since for $k\gg N^{2/d}$, the probability to stay in the box with diameter $\asymp N^{1/d}$ decays to zero exponentially in $kN^{-2/d}$. To the best of our knowledge, this seems to be a fact that has not been noticed nor handled in the study of the free Bose gas before 2023, even though this case is highly relevant from the physics point of view. In the recent paper \cite{KVZ23}, this gap is closed by deriving asymptotics for the reduced one-particle matrix of the free Bose gas for all these well-known boundary conditions, also including Neumann conditions, to prove off-diagonal long-range order, which is considered to be the equivalent to the occurrence of BEC.

In our present paper, however, the technicalities arising in the proof of the LDP turned out serious for the case of periodic boundary condition and we decided to complete the proof only for Dirichlet boundary condition. Indeed, we were not able to prove the upper bound $\Gamma_{\L_N,k}^{\ssup{\rm per}}\leq \lambda_{\rm c}(\beta)^k(1+\eps)^k$ for any $k\in\{1,\dots,N\}$ and all sufficiently large $N$, precisely for the reason mentioned above (the asymptotics for $k\gg N ^{2/d}$ are difficult to handle). We elaborate on this point in Remark \ref{remark periodic} towards the end of Section~\ref{sec-ApprGamma}.

\section{Proof of Theorem~\ref{thm-limfree}} \label{sec-freeenerproof}
The proof of Theorem \ref{thm-limfree} is split into three steps: In Section \ref{sec-ApprGamma} we will show that the limiting free energy exists as $N\to\infty$ 
and is independent of the Dirichlet boundary conditions imposed.  The assertions (i) and (ii) of Theorem \ref{thm-limfree} will be shown in Section \ref{sec-LDPproof},
while assertions (iii) and (iv) will be proved in Section \ref{sec-Varformproof}.

\subsection{Approximation of $\Gamma^{\ssup{\rm bc}}_{\L_N,k}$ with $\Gamma_k$.}\label{sec-ApprGamma}

Recall from \eqref{Gammadef} and from \eqref{Gammabcdef} that 
\begin{eqnarray}
\Gamma_k &=&\E_{0}\Big[{\rm e}^{-H_k(B)}\,\1\{B(k\beta)\in\d x\}\Big]\Big/\d x\Big|_{x=0},\\
\Gamma^{\ssup{\rm Dir}}_{\L_N,k}&=&\frac 1 {|\Lambda_N|} \int_{\Lambda_N} \d x\, \E_x \Big[\e^{-H_k(B)}\1\{B(s)\in\L_N\,\forall s\in[0,\beta k]\}\1\{B(k\beta)\in\d x\}\Big]\Big/\d x,\label{GammaDir}
\end{eqnarray}
where we put 
$$
H_k(B)=\sum_{1\leq i< j\leq k} V(B_i, B_j) =\sum_{1\leq i< j\leq k} \int_0^\beta\d s\, v\big(|B_i(s)-B_j(s)|\big)
$$
and recall that $B_i=(B((i-1)+s))_{s\in[0,\beta]}$ is the $i$-th leg of the Brownian motion $B$ on $[0,k\beta]$.

In our proof of the LDP in Section~\ref{sec-LDPproof} below, we will need upper bounds for $\Gamma_{\L_N,k}^{\ssup{\rm Dir}}$ in terms of $\Gamma_k$ for any $k\in\{1,\dots,N\}$, but lower bounds only for any $k\leq k_N$ for some $k_N\to\infty$. In this section, we formulate and prove these assertions in \eqref{simpleesti} and \eqref{Gammalowbound}. 

For convenience, we put $\L_N=[-\frac 12 L_N,\frac 12 L_N)^d$. For $x\in\R^d$, we define $z(x)\in\Z^d$ by $x\in z(x)L_N$ and $[x]=x-z(x)L_N\in\L_N$. 

First we note that
\begin{equation}\label{simpleesti}
\Gamma_k\geq \Gamma^{\ssup{\rm Dir}}_{\L_N,k},\qquad N,k\in\N, 
\end{equation} 
which follows from dropping the indicator on the event $\{B(s)\in\L_N\,\forall s\in[0,\beta k]\}$
in \eqref{GammaDir} and noting the shift-invariance of $H_k(B)$.

We now turn to a lower bound for $\Gamma^{\ssup{\rm Dir}}_{\L_N,k}$ for $k$'s that are fixed or diverging slowly. Pick some integer $k_N\to\infty$ such that $k_N\leq o(N^{1/d})$. Then we claim that for any $\eps\in(0,1)$, there is $N_0$ such that
\begin{equation}\label{Gammalowbound}
\Gamma_{\L_N,k}^{\ssup{\rm Dir}}\geq \Gamma_{k}(1-\eps) ,\qquad N\geq N_0, k\in\{1,\dots,k_N\}.
\end{equation}
Indeed, with a small $\eta\in(0,1)$, split the integration in \eqref{GammaDir} into $(1-\eta)\L_N$ and $\L_N\setminus (1-\eta)\L_N$ to see that
$$
\begin{aligned}
\Gamma_{\L_N,k}^{\ssup{\rm Dir}}
&\geq \Gamma_{k}
-\frac 1{|\L_N|}\int_{(1-\eta)\L_N}\d x\,\mu_{x,x}^{\ssup{\beta k}}\Big[\e^{-H_k(B)}\1\{B(s)\in \L_N^{\rm c}\text{ for some }s\in[0,\beta k]\}\Big]\\
&\qquad -\frac 1{|\L_N|}\int_{ \L_N\setminus (1-\eta)\L_N}\d x\,\mu_{x,x}^{\ssup{\beta k}}\Big[\e^{-H_k(B)}\Big]\\
&\geq \Gamma_{k}-\e^{- C N^{2/d}/k}-\frac{|\L_N\setminus (1-\eta)\L_N|}{|\L_N|} \Gamma_{k} 
\\
&\geq \Gamma_{k}- \e^{- C N^{2/d}/k}-C\eta\Gamma_{k},
\end{aligned}
$$
where we first used an estimate for the probability that a Brownian bridge on the time interval $[0,k \beta]$ reaches a site that is $\eta N^{1/d}$ away (with some suitable $C>0$, not depending on $N$) and then used the maximal growth of $k$.  Now use that $\Gamma_{k}=\lambda_{\rm c}(\beta)^{-k(1+o(1))}$ has a nontrivial exponential behaviour in $k$ to see that the latter bound can be lower estimated against $\Gamma_{k}(1-\eps)$ for any $N\geq N_0$, if $\eta$ and $N_0$ are picked suitably. This proves 
\eqref{Gammalowbound}.

\begin{remark}\label{remark periodic}
    For the periodic boundary conditions, we are not able to derive the corresponding bounds for the following reason: Indeed, by shift-invariance of the interaction
$$
H_{\L_N,k}^{\ssup {\rm per}}(B)=\sum_{z\in\Z^d}\sum_{1\leq i<j\leq k}\int_0^\beta v\big(|B_i(s)-B_j(s)+z L_N|\big)\,\d s,
$$
we see that
$$
\Gamma_{\L_N,k}^{\ssup{\rm per}}=\sum_{z\in\Z^d} \E_0\Big[\e^{- H_{\L_N,k}^{\ssup {\rm per}}(B)}\1\{B(k\beta)\in \d y\}\Big]\Big|_{y=z L_N}.
$$
Now we can invoke the lower bound $\Gamma_{\L,k}^{\ssup{\rm per}}\geq \Gamma_{\L,k}^{\ssup{\rm Dir}}$ and use our lower-bound proof for the latter. We can also estimate $H_{\L_N,k}^{\ssup {\rm per}}(B)\geq H_k(B)$. However, the problem that we encounter now is to upper-estimate the expectation density on $\{B(k\beta)\in L_N \Z^d\}$ against the one on $\{B(k\beta)=0\}$. In fact, this seems to be a rather deep issue, since the behavior of the self-repellent walk depends on the spread of the path in a complicated manner that is currently not understood. However, under the assumptions that we imposed in Theorem~\ref{thm-Gammak}, an extension of that proof might very well be sufficient to derive suitable asymptotics, but we have refrained from pursuing this here.
\end{remark}

\subsection{Proof of the LDP.}\label{sec-LDPproof}

\noindent In this section we will prove assertions (i) and (ii) of Theorem~\ref{thm-limfree}. Recall the set of partitions,  $\mathfrak P_N=\{l=(l_k)_{k\in \N}\in \N_0^\N\colon \sum_k k l_k=N\}$ and the state space $\mathcal X_\rho=\{p\in [0,\infty)^{\N}\colon \sum_k kp_k\in[0,\rho]\}$. We are going to prove that, for any $p\in \mathcal X_\rho$,
\begin{equation}\label{LDPball}
\lim_{\delta\downarrow 0}\lim_{N\to\infty}\frac 1{|\L_N|}\log {\tt P}^{\ssup N}_{\beta,\L_N}\Big((L_k/|\L_N|)_{k\in\N}\in B_\delta(p)\Big)=-J(p),
\end{equation}
where the rate function $J(p)$ is introduced in \eqref{Jdef}, and $B_\delta(p)$ is the $\delta$-ball around $p$ in some metric that induces the product topology on $[0,\infty)^{\N}$. Since the state space $\mathcal X_\rho$ is compact, from \eqref{LDPball} the LDP in (ii) and the identification of the free energy in (i) of Theorem~\ref{thm-limfree} follow along some standard arguments from LDP theory.

Fix $p\in \mathcal X_\rho$.  From a combinatorial argument \cite{A76} it follows that $\#\mathfrak P_N =\e^{o(N)}$ as  $N\to\infty$. Hence, the upper bound  in   \eqref{LDPball} will follow from 
\begin{equation}\label{LDPproof}
\begin{aligned}
&\limsup_{\delta\downarrow 0}\limsup_{N\to\infty}\frac1{|\L_N|}\log \bigg(\sup_{l\in\mathfrak P_N\colon l/|\L_N|\in B_\delta(p)} \prod_{k\in \N}\frac{\big[|\L_N|\Gamma_{\L_N,k}^{\ssup{\rm bc}}\big]^{l_k}}{l_k! k^{l_k}}\bigg)
\\
&\leq -I(p)-\Big(\rho-\sum_{k\in\N}kp_k\Big)\log\lambda_{\rm c}(\beta),
\end{aligned}
\end{equation}
and the lower bound will follow from 
\begin{equation}\label{LDPprooflower}
\begin{aligned}
&\liminf_{\delta\downarrow 0}\liminf_{N\to\infty}\frac1{|\L_N|}\log \bigg(\prod_{k\in \N}\frac{\big[|\L_N|\Gamma_{\L_N,k}^{\ssup{\rm bc}}\big]^{l^{\ssup{\delta,N}}_k}}{l^{\ssup{\delta,N}}_k! k^{l_k^{\ssup{\delta,N}}}}\bigg)
&\geq -I(p)-\Big(\rho-\sum_{k\in\N}kp_k\Big)\log\lambda_{\rm c}(\beta),
\end{aligned}
\end{equation}
for some $l^{\ssup{\delta,N}}\in B_\delta(p)$. Then combining \eqref{LDPproof} and \eqref{LDPprooflower} will imply \eqref{LDPball}. 

\noindent{\bf Proof of \eqref{LDPproof}:} Pick $\delta>0$ and  $l\in\mathfrak P_N$ such that $l/|\L_N|\in B_\delta(p)$. We need a large auxiliary parameter $L\in\N$. We split the product on $k$ on the left-hand side of \eqref{LDPproof} into the products on $k\leq L$ and on $k>L$.  We substitute
\begin{equation}\label{pNLdef}
p_k^{\ssup{N,L}}=\frac {l_k}{|\L_N|}, \qquad k\in \{1,\dots,L\}. 
\end{equation}
Using Stirling's formula and \eqref{simpleesti}-\eqref{Gammalowbound}, the first partial product can be asymptotically identified as
\begin{equation}\label{mainpart}
\begin{aligned}
\prod_{k=1}^L \Big[\frac 1{l_k!}\Big(\frac 1k|\L_N|\Gamma_{\L_n,k}^{\ssup{\rm bc}}\Big)^{l_k}\Big]
&=
\e^{o(|\L_N|)} \prod_{k=1}^L\frac{(\Gamma_k/k)^{p^{\ssup{N,L}}_k |\L_N|}}{(p^{\ssup{N,L}}_k /{\rm e})^{p^{\ssup{N,L}}_k |\L_N|}} 
\\
&=\e^{o(|\L_N|)} \exp\Big\{-|\L_N| I_L(p^{\ssup{N,L}})\Big\},
\end{aligned}
\end{equation}
where $I_L(p)=\sum_{k=1}^L p_k\log\frac{p_k k}{\Gamma_k\e}$ is the ``$L$-cutoff" version of the rate function $I$ given in \eqref{Idef}.
Hence, we see from the preceding argument that
\begin{equation}\label{firstterm}
\begin{aligned}
&\limsup_{N\to\infty}\frac1{|\L_N|}\log \bigg(\sup_{l\in\mathfrak P_N\colon l/|\L_N|\in B_\delta(p)}\prod_{k=1}^L\frac{\big[|\L_N|\Gamma_{\L_N,k}^{\ssup{\rm bc}}\big]^{l_k}}{l_k! k^{l_k}} \bigg) 
&\leq -\inf_{\widetilde p\in B_\delta(p)}I_L(\widetilde p).
\end{aligned}
\end{equation}

Now the product on $k>L$ on the left-hand side of \eqref{LDPproof}
is decomposed into the product of the $\Gamma$-terms and the remainder. Indeed, we are going to show, for any $\delta\in(0,1)$,
\begin{equation}\label{entropy}
\limsup_{L\to\infty}\limsup_{N\to\infty}\frac 1{|\L_N|}\log \bigg(\sup_{l\in\mathfrak P_N\colon l/|\L_N|\in B_\delta(p)}\prod_{k>L} \Big[\frac 1{l_k!}\Big(\frac 1k|\L_N| \Big)^{l_k}\Big]\bigg)\leq 0.
\end{equation}
Concerning the remainder product of the $\Gamma$-terms,  we will show that 
\begin{equation}\label{Gammaremainder}
\begin{aligned}
&\limsup_{\delta\downarrow0}\limsup_{L\to\infty}\limsup_{N\to\infty}\frac 1{|\L_N|}\log\bigg( \sup_{l\in\mathfrak P_N\colon l/|\L_N|\in B_\delta(p)}\prod_{k>L} \big[\Gamma_{\Lambda_N,k}^{\ssup{\mathrm{bc}}}\big]^{l_k}\bigg)
&\leq-\Big(\rho-\sum_{k\in\N}kp_k\Big)\log\lambda_{\rm c}(\beta).
\end{aligned}
\end{equation}

For proving \eqref{entropy}, we use the Stirling bound 
$$
\Big(\frac{l}\e\Big)^{-l}c\frac 1{\sqrt{l}}\leq \frac 1{l!}\leq \Big(\frac {l }\e\Big)^{-l}C\frac 1{\sqrt{l}} \qquad l\in \N,
$$
for some $c,C\in(0,\infty$. Then we see that the left-hand side of \eqref{entropy} can be bounded as follows.
$$
\begin{aligned}
\frac1{|\L_N|}\log\prod_{k>L} \Big[\frac 1{l_k!}\Big(\frac 1k|\L_N| \Big)^{l_k}\Big]
\leq\frac\rho N\sum_{k>L}l_k\log\frac{N\e}{\rho k l_k}+\frac 1N \rho\log C
\end{aligned}
$$
Now use that $\sum_{k>L}\frac{l_k}N\leq \frac 1L\sum_{k>L}\frac{k l_k}N\leq \frac 1L$ and apply Jensen's inequality to the logarithm and the sum over $l_k/\sum_{n>L}l_n$, to see that \eqref{entropy} holds.

Let us turn to a proof of \eqref{Gammaremainder}. Recall from \eqref{lambdadef} that $\lim_{k\to\infty}\Gamma_k^{-1/k}=\lambda_{\rm c}(\beta)$. Pick an $\eps>0$, then we may assume that $L$ is so large that $\Gamma_k\leq (\lambda_{\rm c}(\beta)^{-1}+\eps)^k$ for any $k>L$. 

Recall that by \eqref{simpleesti} we have $\Gamma_{\L_N,k}^{\ssup{\rm Dir}}\leq \Gamma_k$. Thus we can bound the left-hand side of \eqref{Gammaremainder} as follows, for any $l\in\mathfrak P_N$ and for $N>N_0$.
\begin{equation}\label{secondterm}
\begin{aligned}
\frac 1{|\L_N|}\log \prod_{k>L} \big[\Gamma_{\Lambda_N,k}^{\ssup{\mathrm{bc}}}\big]^{l_k}
&\leq\frac 1{|\L_N|}\sum_{k>L}l_k\log\Big(\big[
(\lambda_{\rm c}(\beta)^{-1}+\eps)\big]^k\Big)\\
&\leq \log \big[
(\lambda_{\rm c}(\beta)^{-1}+\eps)\big]\sum_{k>L}\frac{k l_k}{|\L_N|}\\
&=\log \big[
(\lambda_{\rm c}(\beta)^{-1}+\eps)\big]\Big(\rho-\sum_{k=1}^L k p_k^{\ssup{N,L}}\Big),
\end{aligned}
\end{equation}
with $p_k^{\ssup{N,L}}$ as in \eqref{pNLdef}.

Collecting \eqref{firstterm}, \eqref{entropy} and \eqref{secondterm}, we see that, for any $L\in\N$ and $\delta,\eps\in(0,1)$,
\begin{equation}\label{fastfertig}
\begin{aligned}
\limsup_{N\to\infty}\frac1{|\L_N|}&\log \sup_{l\in\mathfrak P_N\colon l/|\L_N|\in B_\delta(p)}\prod_{k\in \N}\frac{\big[|\L_N|\Gamma_{\L_N,k}^{\ssup{\rm bc}}\big]^{l_k}}{l_k! k^{l_k}}\\
&\leq  -\inf_{\widetilde p\in B_\delta(p)}I_L(\widetilde p)+\Big(\rho-\inf_{\widetilde p\in B_\delta(p)}\sum_{k=1}^L k \widetilde p_k\Big) \log \big[
(\lambda_{\rm c}(\beta)^{-1}+\eps)\big].
\end{aligned}
\end{equation}
Letting $\delta\downarrow 0$ and $\eps\downarrow 0$, we obtain, for any $L\in\N$,
$$
\limsup_{\delta\downarrow 0}\big({\rm l.h.s.~of }\ \eqref{fastfertig}\big)\leq -I_L(p)-\Big(\rho-\sum_{k=1}^L kp_k\Big)\log\lambda_{\rm c}(\beta).
$$
Passing to the limit $L\to\infty$, we complete the proof of \eqref{LDPproof}. Now let us turn to the 

\medskip

\noindent{\bf Proof of \eqref{LDPprooflower}.} Again, we pick a large auxiliary parameter $L$ and define $l^{\ssup{\delta,N}}=(l^{\ssup{\delta,N}}_k)_{k\in\{1,\dots,N\}}$ (also depending on $L$) by
$$
l^{\ssup{\delta,N}}_k=
\begin{cases}
\lfloor |\L_N| p_k\rfloor &\mbox{for }k\leq L,\\
A_{p,L,N}&\mbox{for }k=k_N=\lfloor N^{1/2d}\rfloor,\\
0&\mbox{otherwise,}
\end{cases}
$$
were $A_{p,L,N}$ is picked such that $\sum_{k=1}^N k l^{\ssup{\delta,N}}_k=|\L_N|\rho$. Then $l^{\ssup{\delta,N}}_k/|\L_N|\to p_k$ as $N\to\infty$ for $k\in\{1,\dots,L\}$, and $l^{\ssup{\delta,N}}_{k_N}/|\L_N|=A_{p,L,N}/|\L_N|=
(\rho-\sum_{k=1}^Lk p_k)(1+o(1))
\frac 1{k_N}$ as $N\to\infty$. We see that $l^{\ssup{\delta,N}}/|\L_N|\in B_\delta(p)$ for $N$ and $L$ sufficiently large.

As in \eqref{mainpart}, we see  that
$$
\lim_{N\to\infty}\frac1{|\L_N|}\log \prod_{k=1}^L\frac{\big[|\L_N|\Gamma_{\L_N,k}^{\ssup{\rm bc}}\big]^{l^{\ssup{\delta,N}}_k}}{l^{\ssup{\delta,N}}_k! k^{l^{\ssup{\delta,N}}_k}}
\geq - I_L(p)
$$
Furthermore, it is easy to see that
$$
\liminf_{L\to\infty}\liminf_{N\to\infty}\frac 1{|\L_N|}\log \prod_{k>L} \Big[\frac 1{l^{\ssup{\delta,N}}_k!}\Big(\frac 1k|\L_N| \Big)^{l^{\ssup{\delta,N}}_k}\Big]=0.
$$
Finally, usin the assertion around \eqref{Gammalowbound} (which holds for both bc$=$per and bc$=$Dir) and recalling that $\Gamma_k=\lambda_{\rm c}(\beta)^{-k(1+o(1))}$, we get that
$$
\liminf_{L\to\infty}\liminf_{N\to\infty}\frac 1{|\L_N|}\log \prod_{k>L} \big[\Gamma_{\Lambda_N,k}^{\ssup{\mathrm{bc}}}\big]^{l^{\ssup{\delta,N}}_k}
\geq
-\Big(\rho-\sum_{k\in\N}kp_k\Big)\log\lambda_{\rm c}(\beta).
$$
Putting together the last three displays, we see that \eqref{LDPprooflower} holds.

Hence, \eqref{LDPball} has been proved, and this finishes the proof of the LDP in the assertions (i) and (ii) of Theorem~\ref{thm-limfree}.

\subsection{Analysis of the Rate Function}\label{sec-Varformproof}

\noindent Now we prove assertions (iii) and (iv) of Theorem~\ref{thm-limfree}. It is clear that the state space $\mathcal X_\rho$ is compact and that $J$ is continuous and strictly convex on $\Xcal_\rho$. Hence, it suffices to derive the Euler--Lagrange equations for minimisers. Abbreviate $\alpha_k=k/\Gamma_k $ and $\alpha=(\alpha_k)_k$, then $J(p)=\rho\log\lambda_{\rm c}(\beta)+ \langle p, \log (p\alpha/\e)-{\rm id}\lambda_{\rm c}(\beta)\rangle$ for any $p\in\Xcal_\rho$. Assume that $p$ is a minimizer for $J$. It is easy to see that $p_k>0$ for any $k$, since otherwise adding a small weight would clearly make the $J$-value smaller.  Then, for any perturbator $\delta=(\delta_k)_k\in\R^{\N}$, we have
\begin{equation}\label{ELeqcalc}
0=\partial_\eps\Big|_{\eps=0} J(p+\eps\delta)=\langle\delta,\log (p \alpha/\e)-{\rm id}\lambda_{\rm c}(\beta)\rangle+\langle p,\smfrac\delta p\rangle=\langle\delta,\log (p \alpha)-{\rm id}\lambda_{\rm c}(\beta)\rangle.
\end{equation}
Let us assume that the minimizer $p$ satisfies $\sum_k kp_k=\rho$. Then we may assume that the perturbator $\delta $  satisfies $0=\sum_k k\delta_k=\langle \delta,{\rm id}\rangle$, and the right-hand side of \eqref{ELeqcalc} is equal to $\langle\delta,\log (p \alpha)\rangle$. Hence, we have derived that any $\delta$ that is perpendicular to the vector ${\rm id}= (k)_k$ is also perpendicular to the vector $\log (p\alpha)$. Hence, there is some constant $C$ such that $Ck=\log(p_k\alpha_k)$ for any $k$, which reads
\begin{equation}\label{ELeq}
k p_k=\e^{-Ck}\Gamma_k,\qquad k\in\N.
\end{equation}
Now check the constraint $\sum_k k p_k=\rho$. In the first case, where $\sum_k \Gamma_k \lambda_{\rm c}(\beta)^k$ is finite, the smallest $C$ that can ever make the sum of $\e^{-Ck}\Gamma_k$  finite is $C=-\log\lambda_{\rm c}(\beta)$, and the largest $\rho$ that can ever be realized by that sum is $\rho_{\rm c}(\beta)$. Substituting $\e^{-Ck}=\lambda_{\rm c}(\beta)^{k}\e^{-c(\rho)k}$ we arrive at the Euler-Lagrange equations in \eqref{pkELequation} with $c=c(\rho)$ as in the text below \eqref{pkELequation}. In the case where  $\sum_k \Gamma_k \lambda_{\rm c}(\beta)^k=\infty$, then any $\rho>0$ can be realized by that choice of $C=c(\rho)+\log \lambda_{\rm c}(\beta)$.

Now, if the minimizer $p$ satisfies $\sum_k kp_k<\rho$, then the perturbator $\delta$ is arbitrary, and from \eqref{ELeqcalc} we get that $\log (p \alpha)-{\rm id}\lambda_{\rm c}(\beta)=0$, i.e., that \eqref{ELeq} holds with $\e^{-Ck}$ replaced by $\lambda_{\rm c}(\beta)^{k}$, which proves \eqref{pkELequation} as well. In particular, $\sum_k kp_k=\rho_{\rm c}(\beta)$. The formula in (iv) is proved by substituting what we have derived.

\section{Proof of Theorem~\protect\ref{thm-Gammak}: Estimates on the Green Function}

\label{sec-ProofBEC}

This section is devoted to the proof of Theorem~\ref{thm-Gammak}. Without
loss of generality, we are going to do the proof only for $\beta=1$
(otherwise, consider a simple rescaled version of $v$).

In Section~\ref{sec-outlineThm} we introduce some notation, formulate the
main steps of the proof and prove Theorem~\ref{thm-Gammak}, subject to these
steps. In Section~\ref{sec-OutlineProp} we prove explain the main one of
them, and prove it finally in Section~\ref{sec-def-Pi}.

\subsection{Some Notation, and the Main Steps}

\label{sec-outlineThm}

\noindent Recall that $\varphi _{t}(x)=(2\pi t)^{-d/2}\exp [-\frac{|x|^{2}}{%
2t}]$ denotes the Gaussian kernel with variance $t$, and we write $\varphi
:=\varphi _{1}$. We define%
\begin{equation}
G(x):=\sum_{n=1}^{\infty }\varphi _{n}(x),\qquad x\in \mathbb{R}^{d},
\label{Def_G}
\end{equation}%
which is, up to the missing summand for $n=0$ (which is often formally
written as the delta function at $x$), equal to the Green function of the
random walk with standard normal increments. (It is not to be confused with
the Green function $G_{\mathrm{c}}(x)=\int_{0}^{\infty }\varphi _{t}(x)%
\,\mathrm{d}t$ for Brownian motion. See Lemma~\ref{Le_Edgeworth} and its proof
for some relation between them.) In spite of the missing summand for $n=0$,
we will call $G$ the Green function from now. Note that $G(x)$ is finite in $%
d\geq 3$, and $G(\cdot )$ is rotational symmetric, bounded, and infinitely
often differentiable with all derivatives in the set ${\mathcal{C}}_{0}(%
\mathbb{R}^{d})$ of continuous functions $\mathbb{R}^{d}\rightarrow \mathbb{R%
}$ vanishing at infinity. Recall that $v$ is a bounded continuous function $%
\mathbb{R}^{+}\rightarrow \mathbb{R}^{+}$ with compact support, i.e., we
assume that there exists $L,R>0$ such that $v(r)\leq L$ for every $r,$ and $%
v(r)=0$ for $r\geq R.$

Let us introduce the main object of the proof, a version of the
self-repellent Brownian motion. Recall the Hamiltonian on ${\mathcal{C}}_{N}$%
:%
\[
H_{N}(B):=\sum_{1\leq i<j\leq N}V(B_{i},B_{j}),\ \mathrm{where\ }%
V(f,g)=\int_{0}^{1}v(|f(s)-g(s)|)\,\mathrm{d}s,\ \,\,f,g\in \mathcal{C}_{1},
\]%
where we recall from \eqref{jthleg} that $B_{j}=(B_{j}(s))_{s\in \lbrack
0,1]}:=(B((j-1)+s))_{s\in \lbrack 0,1]}$ is the $j$-th leg of the Brownian
motion $B$. Then $0\leq H_{N}(B)\leq LN$ since $0\leq v\leq L$. Also recall
that $\mu _{x,y}^{{\scriptsize (1)}}$ denotes the canonical Brownian bridge
measure in the time interval $[0,1]$. We write $\mathbb{P}_{N}$ for the
standard Wiener measure on $\mathcal{C}_{N}$. Then the tilted measure ${%
\mathbb{Q}}_{\alpha ,N}$ on $\mathcal{C}_{N}$ is defined by%
\begin{equation}
{\mathbb{Q}}_{\alpha ,N}(\mathrm{d}B):=\mathrm{e}^{-\alpha H_{N}(B)}\,%
\mathbb{P}_{N}(\mathrm{d}B),\qquad N\in \mathbb{N},\alpha \in \lbrack
0,\infty ).  \label{def-tilted}
\end{equation}%
Note that ${\mathbb{Q}}_{\alpha ,N}$ is not normalized. Its normalized
version is the announced variant of the self-repellent Brownian motion. For $%
N=1,$ there is no interaction, so we have ${\mathbb{Q}}_{\alpha ,1}=\mathbb{P%
}_{1}$. The finite dimensional marginals of ${\mathbb{Q}}_{\alpha ,N}$ have
continuous Lebesgue densities. In particular, we define%
\begin{equation}
\Gamma _{\alpha ,N}(x):=\frac{{\mathbb{Q}}_{\alpha ,N}(B_{N}\in \mathrm{d}x)%
}{\mathrm{d}x}.  \label{def-GammaN}
\end{equation}%
Then $\Gamma _{k}$ defined in \eqref{Gammadef} for $v$ replaced by $\alpha v$
(and $\beta =1$) is equal to $\Gamma _{\alpha ,k}(0)$. Note that $\mathrm{e}%
^{-\alpha LN}\varphi _{N}(x)\leq \Gamma _{\alpha ,N}(x)\leq \varphi _{N}(x)$
for any $N\in \mathbb{N}$ and $x\in \mathbb{R}^{d}$. Now define the Green
function for the self-repellent Brownian motion as follows: for $\lambda >0,$
let%
\begin{equation}
G_{\alpha ,\lambda }^{{\scriptsize (\Gamma )}}(x):=\sum_{N=1}^{\infty
}\lambda ^{N}\Gamma _{\alpha ,N}(x),\ \lambda _{\mathrm{cr}}(\alpha ):=\sup
\left\{ \lambda >0\colon \Vert G_{\alpha ,\lambda }^{{\scriptsize (\Gamma )}%
}\Vert _{1}<\infty \right\} .  \label{Def_criticalvalue}
\end{equation}%
Then $\lambda _{\mathrm{cr}}(\alpha )$ is equal to $\lambda _{\mathrm{c}}(1)$
defined in \eqref{lambdadef} for $v$ replaced by $\alpha v$.

Our main result in the present section is:

\begin{theorem}
\label{Th_main} Assume $d\geq 5$. Then there exists $\alpha _{0}=\alpha
_{0}(v,d)>0$ such that for $\alpha \in (0,\alpha _{0})$%
\[
G_{\alpha ,\lambda _{\mathrm{cr}}(\alpha )}^{{\scriptsize (\Gamma )}}(x)\leq
5G(x),\qquad x\in \mathbb{R}^{d}. 
\]%
In particular, $\sum_{N=1}^{\infty }\lambda _{\mathrm{cr}}(\alpha
)^{N}\Gamma _{\alpha ,N}(0)<\infty $ for $\alpha \in (0,\alpha _{0})$.
\end{theorem}

Theorem \ref{Th_main} evidently implies Theorem \ref{thm-Gammak}. The rest
of the article is devoted to the proof of Theorem \ref{Th_main}.

For $\lambda <\lambda _{\mathrm{cr}}(\alpha )$, consider%
\begin{equation}
\overline{G}_{\alpha ,\lambda }^{{\scriptsize (\Gamma )}}:=G_{\alpha
,\lambda }^{{\scriptsize (\Gamma )}}\star \varphi ,  \label{Def_Gbar}
\end{equation}%
so that $\overline{G}_{\alpha ,\lambda }^{{{({\Gamma }})}}$ is integrable,
and has bounded and continuous derivatives of all orders. We first observe
the following simple fact:

\begin{lemma}
\label{Le_0}Fix $\alpha \in (0,\infty )$ such that $\mathrm{e}^{\alpha
L}\leq 2$ (Here, $2$ is taken for convenience and any fixed constant $>1$
would suffice for our purposes). Then the function 
\[
f_{\alpha }(\lambda ):=\sup_{x\in \mathbb{R}^{d}}\frac{\overline{G}_{\alpha
,\lambda }^{{\scriptsize (\Gamma )}}(x)}{G(x)}
\]%
is continuous and increasing in $\lambda \in \lbrack 0,\lambda _{\mathrm{cr}%
}(\alpha )).$
\end{lemma}

\begin{proof}
It is clear that $f_\alpha$ is increasing. For the continuity, it suffices
to prove that $f_\alpha(\cdot)$ is continuous on $[ \lambda _{0}^{\prime
},\lambda _{0} ] $ for any $0<\lambda _{0}^{\prime }<\lambda _{0}<\lambda _{%
\mathrm{cr}}( \alpha).$

We first prove that there exists $R(\lambda _{0}^{\prime },\lambda _{0})$
such that for $\lambda \in \lbrack \lambda _{0}^{\prime },\lambda _{0}]$%
\begin{equation}
f_{\alpha }(\lambda )=\sup_{|x|\leq R(\lambda _{0}^{\prime },\lambda _{0})}%
\frac{\overline{G}_{\alpha ,\lambda }^{{\scriptsize (\Gamma )}}(x)}{G(x)}.
\label{Cont_campact}
\end{equation}%
Since $\mathrm{e}^{-\alpha LN}\varphi _{N}\leq \Gamma _{\alpha ,N}$, and
since $\sum_{n}\lambda ^{n}\varphi _{n}$ does not converge for $\lambda >1,$
we get that $\lambda _{\mathrm{cr}}(\alpha )\leq \mathrm{e}^{\alpha L}\leq 2$%
, by the choice of $\alpha $. Note that the lower bound $\lambda _{\mathrm{cr%
}}(\alpha )\geq 1$ is evident as $H_{N}(B)\geq 0.$ Define $\lambda
_{1}:=(\lambda _{0}+\lambda _{\mathrm{cr}}(\alpha ))/2.$ For any $N\in 
\mathbb{N}$, using that $\lambda _{\mathrm{cr}}(\alpha )\leq 2$ and $\Gamma
_{\alpha ,n}(\cdot )\leq \varphi _{n}(\cdot )$, 
\begin{eqnarray}
\overline{G}_{\alpha ,\lambda }^{{{({\Gamma }})}}(x)
&=&\sum_{n=1}^{N-1}\lambda ^{n}(\Gamma _{\alpha ,n}\star \varphi
)(x)+\sum_{n=N}^{\infty }\left( \frac{\lambda }{\lambda _{1}}\right)
^{n}\lambda _{1}^{n}(\Gamma _{\alpha ,n}\star \varphi )(x)  \nonumber \\
&\leq &\sum_{n=1}^{N-1}2^{n}\varphi _{n+1}(x)+\sum_{n=N}^{\infty }\left( 
\frac{\lambda _{0}}{\lambda _{1}}\right) ^{n}\lambda _{1}^{n}(\Gamma
_{\alpha ,n}\star \varphi )(x)  \label{G_lambda_bar} \\
&\leq &\sum_{n=1}^{N-1}2^{n}\frac{1}{(2\pi (n+1))^{\frac{d}{2}}}\,\exp
[-|x|^{2}/2(n+1)]+\left( \frac{\lambda _{0}}{\lambda _{1}}\right) ^{N}\Vert 
\overline{G}_{\alpha ,\lambda _{1}}^{{{({\Gamma }})}}\Vert _{\infty }. 
\nonumber
\end{eqnarray}%
Remark that by definition of $\lambda _{1}$ we have $G_{\alpha ,\lambda
_{1}}^{{{({\Gamma }})}}\in L^{1}(\mathbb{R}^{d})$ and therefore $\Vert 
\overline{G}_{\alpha ,\lambda _{1}}^{{{({\Gamma }})}}\Vert _{\infty }=\Vert
G_{\alpha ,\lambda _{1}}^{{{({\Gamma }})}}\star \varphi \Vert _{\infty
}<\infty $. By choosing%
\[
N:=\frac{|x|}{2\sqrt{2\log 2}} 
\]%
one sees that there are constants $c,C$, depending on $\lambda _{0},$ such
that for $\lambda \leq \lambda _{0}$%
\[
\overline{G}_{\alpha ,\lambda }^{{{({\Gamma }})}}(x)\leq C\exp [-c|x|]. 
\]%
As $\overline{G}_{\alpha ,\lambda }^{{{({\Gamma }})}}(0)$ is bounded away
from $0$ for $\lambda \geq \lambda _{0}^{\prime }$, and the Green function $%
G(x)$ decays like $\mathrm{const}\times |x|^{-d+2}$ as $|x|\rightarrow
\infty $, \eqref{Cont_campact} follows.

To prove the desired continuity, we appeal to the representation of $%
\overline{G}_{\alpha ,\lambda }^{{{({\Gamma }})}}$ in the first identity in (%
\ref{G_lambda_bar}). Then $(\Gamma _{\alpha ,n}\star \varphi )(x)$ is
continuous, and therefore uniformly continuous on $B_{R}:=\{x\colon |x|\leq
R\}.$ Therefore, for any $N,$ the first part $\sum_{n=1}^{N-1}\lambda
^{n}(\Gamma _{\alpha ,n}\star \varphi )(\cdot )$ is uniformly continuous,
also uniformly in $\lambda \in \lbrack \lambda _{0}^{\prime },\lambda _{0}].$
Now since $\lambda _{0}<\lambda _{\mathrm{cr}}$, by definition of $\lambda _{%
\mathrm{cr}}(\alpha )$, the second summand $\sum_{n=N}^{\infty }\lambda
_{0}^{n}(\Gamma _{\alpha ,n}\star \varphi )(x)$ converges to $0$ as $%
N\rightarrow \infty $ uniformly in $x$ and $\lambda \in \lbrack \lambda
_{0}^{\prime },\lambda _{0}],$ we conclude that $\overline{G}_{\alpha
,\lambda }^{{{({\Gamma }})}}(x)$ is uniformly continuous in $x\in B_{R}$ and 
$\lambda \in \lbrack \lambda _{0}^{\prime },\lambda _{0}].$ The same remains
true for $\overline{G}_{\alpha ,\lambda }^{{{({\Gamma }})}}(x)/G(x)$.
Therefore $\sup_{x\in B_{R}}\overline{G}_{\alpha ,\lambda }^{{{({\Gamma }})}%
}(x)/G(x)$ is continuous in $\lambda \in \lbrack \lambda _{0}^{\prime
},\lambda _{0}]$, as claimed.
\end{proof}

The key to proof of Theorem \ref{Th_main} is the following.

\begin{prop}
\label{Prop_main}There exists $\alpha _{0}>0$ such that for $\alpha \leq
\alpha _{0},$ there is no $\lambda <\lambda _{\mathrm{cr}}(\alpha )$ with $%
f_{\alpha }(\lambda )\in (2,3].$
\end{prop}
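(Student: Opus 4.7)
The plan is to implement a ``forbidden interval'' bootstrap in the spirit of Brydges--Spencer, adapted to the continuous, path-wise setting as in \cite{BHK,ABR16}: under the hypothesis $f_\be(\lambda)\le 3$, a lace expansion will yield the strictly stronger bound $f_\be(\lambda)\le 2$, so the open interval $(2,3]$ cannot be attained. Combined with the continuity of $f_\be$ from Lemma~\ref{Le_0} and the trivial value $f_\be(0)=0$, this excludes values in $(2,3]$ for all $\lambda<\lambda_{\mathrm{cr}}(\be)$.

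First I would derive the lace expansion identity. Writing $\e^{-\be H_N(B)}=\prod_{1\le i<j\le N}(1+U_{ij})$ with $U_{ij}:=\e^{-\be V(B_i,B_j)}-1$, and performing a Brydges--Spencer expansion over connected diagrams on the leg index set $\{1,\dots,N\}$, one obtains an identity of the schematic form
\begin{equation*}
\Gamma_{\be,N}(x)=\varphi_N(x)+\sum_{m=2}^{N}\bigl(\Pi_{\be,m}\star\Gamma_{\be,N-m}\bigr)(x),
\end{equation*}
where each coefficient $\Pi_{\be,m}$ is an integral of a connected lace diagram and carries at least one factor $U_{ij}=O(\be)$. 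Summing against $\lambda^N$ yields a renewal-type identity for $G_{\be,\lambda}^{\ssup\Gamma}$ of the form
\begin{equation*}
G_{\be,\lambda}^{\ssup\Gamma}=\lambda\varphi+\lambda\,\varphi\star G_{\be,\lambda}^{\ssup\Gamma}+\Pi_{\be,\lambda}\star G_{\be,\lambda}^{\ssup\Gamma},\qquad \Pi_{\be,\lambda}:=\sum_{m\ge 2}\lambda^m\Pi_{\be,m},
\end{equation*}
valid for $\lambda<\lambda_{\mathrm{cr}}(\be)$.

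The quantitative heart is the diagrammatic estimate on $\Pi_{\be,\lambda}$. Under the bootstrap assumption $\overline{G}_{\be,\lambda}^{\ssup\Gamma}(x)\le 3\,G(x)$ together with the crude bound $\Gamma_{\be,n}\le\varphi_n$, every lace diagram is dominated by a convolution of a bounded number of copies of $G$ and $\varphi$, each interaction vertex supplying a factor $\le \be\|v\|_\infty R^d$. The triangle and square-type convolution bounds for $G$ available in $d\ge 5$ then collapse each such diagram to at most a constant multiple of $G(x)$ pointwise and to a constant in $L^1$; summing over diagrams one obtains
\begin{equation*}
|\Pi_{\be,\lambda}(x)|\le C_v\,\be\,G(x),\qquad \|\Pi_{\be,\lambda}\|_1\le C_v\,\be,
\end{equation*}
uniformly in $\lambda<\lambda_{\mathrm{cr}}(\be)$. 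Substituting into the identity above, using $\lambda\le\lambda_{\mathrm{cr}}(\be)\le \e^{\be L}$ together with the Brownian convolution identity $G=\varphi+\varphi\star G$, an elementary manipulation yields $\overline{G}_{\be,\lambda}^{\ssup\Gamma}(x)\le (1+C_v'\be)G(x)$ for $\be\le\be_0$, so that $f_\be(\lambda)\le 1+C_v'\be<2$, contradicting $f_\be(\lambda)\in(2,3]$.

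The principal obstacle I anticipate is the diagrammatic step. Because each interaction $V(B_i,B_j)=\int_0^1 v(|B_i(s)-B_j(s)|)\,\d s$ couples the \emph{entire} leg trajectories on $[0,1]$, not just their endpoints, the scalar convolution bookkeeping employed for the endpoint-interaction model in \cite{ABR16} is insufficient. Following \cite{BHK}, one must carry out the expansion and bound its diagrams in a Banach algebra of integral kernels acting on path space, and ``project'' onto endpoint positions only at the boundaries between successive diagrammatic blocks, so that the scalar triangle and bubble estimates on $G$ become applicable. This algebraic framework, together with the associated convolution estimates, is the content of the lemmas deferred to Sections~\ref{Le_1_proof} and \ref{Le_2_proof}, with the technical bounds collected in Section~\ref{sec-technical-est}.
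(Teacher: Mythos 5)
Your proposal follows the same overall strategy as the paper: assume $f_\be(\lambda)\le 3$, deduce $G_{\be,\lambda}^{\ssup\Gamma}\le 7G$, run a lace expansion to express $G_\lambda^{\ssup\Gamma}$ through an irreducible kernel $G_\lambda^{\ssup\Pi}$, control $G_\lambda^{\ssup\Pi}$ using the bootstrap hypothesis and smallness of $\be$, and conclude $f_\be(\lambda)\le 2$. That matches the paper's decomposition into Lemma~\ref{Le_1} (lace expansion) and Lemma~\ref{Le_2} (deconvolution). However, two of your intermediate claims are materially weaker than what the argument actually requires, and one genuinely hard step is dismissed.

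First, the decay you claim for the irreducible kernel, $\vert\Pi_{\be,\lambda}(x)\vert\le C_v\be\,G(x)$, is not strong enough. $G(x)$ decays only like $\vert x\vert^{2-d}$, whereas the deconvolution step needs the irreducible kernel to decay at least as fast as $(1+\vert x\vert)^{-d-4}$ (this is what the mean-zero correction and the Taylor expansion in Lemma~\ref{lemma-rho} demand). The paper proves $\vert G_{\be,\lambda}^{\ssup\Pi}(x)\vert\le C\be(1+\vert x\vert)^{6-3d}$ in Lemma~\ref{Le_1}, and the inequality $6-3d\le -d-4$ is precisely where the restriction $d\ge 5$ enters. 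A bound of order $G(x)$ would not feed into the subsequent inversion argument.

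Second, and more fundamentally, the phrase ``an elementary manipulation yields $\overline{G}_{\be,\lambda}^{\ssup\Gamma}\le (1+C_v'\be)G$'' conceals the core analytic difficulty. Inverting $I-\lambda\varphi-G_\lambda^{\ssup\Pi}$ under convolution is not elementary, because $\lambda\varphi$ has total mass $\lambda$ which can be as large as $\lambda_{\mathrm{cr}}(\be)\in[1,2]$, so the kernel to be inverted has total mass near (or above) $1$ and the Fourier symbol degenerates at the origin. This is exactly what Lemma~\ref{Le_2} handles: the paper introduces the Banach algebra $\mathbf B$ with norm $\Vert f\Vert=\max\{\Vert f\Vert_1,\Vert f\Vert_\infty,\sup_x\vert x\vert^d\vert f(x)\vert\}$, makes the crucial mean-zero subtraction $\rho:=-G_\lambda^{\ssup\Pi}+(\mu-\lambda)\varphi$ with $\mu=\int(G_\lambda^{\ssup\Pi}+\lambda\varphi)$, proves $\Vert\rho\star G_\mu\Vert\le C\be$ (Lemmas~\ref{lemma-rho}--\ref{Le_rho2}, exploiting rotational symmetry and the cancellation $\int\rho=0$), and then sums a Neumann series in $\mathbf B$, finally identifying the resulting $S$ with $G_\lambda^{\ssup\Gamma}$ by a Fourier uniqueness argument (Lemma~\ref{lemma-S}). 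None of this is a one-line substitution. Relatedly, your attribution of the Banach algebra to the diagrammatic estimate is misplaced: the diagrammatic step (Section~\ref{Le_1_proof}) is done by direct Gaussian estimates on the path-pair kernels $u_n$ of \eqref{def-un} and scalar convolution bounds after projecting to endpoints; the Banach algebra lives in the deconvolution section (Section~\ref{Le_2_proof}), acting on functions of a single spatial variable, not on path-space kernels.

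Finally, a small but real slip: the renewal relation should read $\Gamma_N=\varphi\star\Gamma_{N-1}+\sum_{k\ge 2}\Pi_k\star\Gamma_{N-k}$, not $\Gamma_N=\varphi_N+\sum_m\Pi_m\star\Gamma_{N-m}$; the latter would require a different definition of $\Pi$ and does not follow from the breakpoint decomposition you describe.
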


The proof is based on two main lemmas, namely Lemma \ref{Le_1} and Lemma \ref%
{Le_2}, which will be proved in the next two sections. Let us first note
that the above proposition, together with Lemma~\ref{Le_0}, implies Theorem %
\ref{Th_main}:

\begin{proof}[\textbf{Proof of Theorem \protect\ref{Th_main} (assuming
Proposition \protect\ref{Prop_main})}]
We will show Theorem \ref{Th_main} with $\alpha_0$ replaced by $%
\min\{\alpha_0,\frac 1L\log 2\}$, where $\alpha_0$ is from Proposition~\ref%
{Prop_main}.

Fix $\alpha \in (0,\alpha _{0})$ so small that $\lambda _{\mathrm{cr}%
}(\alpha )\leq 2$. As $\Gamma _{\alpha ,n}\leq \varphi _{n}$ for any $n\in 
\mathbb{N}$, it is evident that $f_{\alpha }(\lambda )\leq 1$ for $\lambda
\leq 1$. Then Proposition \ref{Prop_main} and the continuity in $\lambda $
shown in Lemma \ref{Le_0} imply that $f_{\alpha }(\lambda )\leq 2$ for $%
\lambda <\lambda _{\mathrm{cr}}(\alpha ).$ This means that $\overline{G}%
_{\alpha ,\lambda }^{{{({\Gamma }})}}(x)\leq 2G(x)$ for any $x\in \mathbb{R}%
^{d}.$ As $\Gamma _{\alpha ,n}(x)\geq 0$ for all $x,$ it follows that $%
\overline{G}_{\alpha ,\lambda _{\mathrm{cr}}(\alpha )}^{{{({\Gamma }})}%
}(x)\leq 2G(x)$. Observe now that from $H_{n}(B)\leq H_{n+1}(B)$, it follows
that%
\[
\Gamma _{\alpha ,n}\star \varphi \geq \Gamma _{\alpha ,n+1}. 
\]%
This implies%
\[
\overline{G}_{\alpha ,\lambda _{\mathrm{cr}}(\alpha )}^{{{({\Gamma }})}}\geq 
\frac{G_{\alpha ,\lambda _{\mathrm{cr}}}^{{{({\Gamma }})}}-\varphi }{\lambda
_{\mathrm{cr}}(\alpha )}, 
\]%
and therefore, using $\overline{G}_{\alpha ,\lambda _{\mathrm{cr}}(\alpha
)}^{{{({\Gamma }})}}(x)\leq 2G(x),$ and $\lambda _{\mathrm{cr}}(\alpha )\leq
2,$ it follows that%
\begin{equation}
G_{\alpha ,\lambda _{\mathrm{cr}}(\alpha )}^{{{({\Gamma }})}}\leq \lambda _{%
\mathrm{cr}}(\alpha )\overline{G}_{\alpha ,\lambda _{\mathrm{cr}}(\alpha )}^{%
{{({\Gamma }})}}+\varphi \leq 4G+\varphi \leq 5G,  \label{5_G_est}
\end{equation}%
which implies Theorem \ref{Th_main}.
\end{proof}

\subsection{Outline of the Proof of Proposition \protect\ref{Prop_main}.}\label{sec-OutlineProp}

\noindent We outline the arguments for Proposition \ref{Prop_main}. We have
to show that if $\alpha $ is small enough, then the following implication is
true for all $\lambda <\lambda _{\mathrm{cr}}(\alpha )$: 
\[
f_{\alpha }(\lambda )\leq 3\quad \Longrightarrow \quad f_{\alpha }(\lambda
)\leq 2.
\]%
So, we assume $f_{\alpha }(\lambda )\leq 3$. For the ease of notation, we
drop the parameter $\alpha $. The proof then splits into three main steps:
First, by the argument leading to (\ref{5_G_est}) we see that this leads to%
\begin{equation}
G_{\lambda }^{{\scriptsize (\Gamma )}}\leq \lambda \overline{G}_{\lambda }^{%
{\scriptsize (\Gamma )}}+\varphi \leq 7G(x).  \label{Bound_7}
\end{equation}%
Next, we build on a modification of the lace expansion technique. This
provides a complicated but rather explicit bound for an inverse of $%
I+G_{\lambda }^{{{({\Gamma }})}}$, under convolution, where $I$ is the
identity operator (formally given by the Dirac $\delta $ function). This
inverse is of the form 
\[
(I+G_{\lambda }^{{\scriptsize (\Gamma )}})^{-1}=I-G_{\lambda }^{{\scriptsize %
(\Pi )}},\ \mathrm{where\ }G_{\lambda }^{{\scriptsize (\Pi )}%
}(x):=\sum_{N=1}^{\infty }\lambda ^{N}\Pi _{N}(x)
\]%
for some functions $\Pi _{N}(x)$ that we will describe precisely in the next
section. The crucial point is that under the condition $G_{\lambda }^{{{({%
\Gamma }})}}(x)\leq 7G(x)$ for all $x\in \mathbb{R}^{d}$, we obtain good
decay properties for $G_{\lambda }^{{\scriptsize (\Pi )}}(x).$ Also $%
G_{\lambda }^{{\scriptsize (\Pi )}}(x)$ is in a suitable sense small if $%
\alpha $ is small.

In the final step, which is quite orthogonal to lace expansion used in the
preceding step, we show that if $G_{\lambda }^{{\scriptsize (\Pi )}}$ is
small, in a sense that will be made precise, we can invert $I-G_{\lambda }^{%
{\scriptsize (\Pi )}}$ under convolution using a Neumann series. This then
gives an estimate of $G_{\lambda }^{{{({\Gamma }})}}$ in terms of
\textquotedblleft smallness properties\textquotedblright\ of $G_{\lambda }^{{%
{({\Pi }})}},$ and in particular it gives the estimate $G_{\lambda }^{%
{\scriptsize (\Gamma )}}(x)\leq 2G(x)$ if $\alpha $ is small enough, implying%
\[
\overline{G}_{\lambda }^{{\scriptsize (\Gamma )}}=G_{\lambda }^{{\scriptsize %
(\Gamma )}}\star \varphi \leq 2G(x)\star \varphi \leq 2G.
\]%
This completes the proof of Proposition \ref{Prop_main}.

The argument is however slightly more delicate than just indicated, as we
cannot use a Neumann series directly on the $G_{\lambda }^{{\scriptsize (\Pi
)}}.$ There is a correction needed corresponding to the correct diffusion
matrix of the self-repelling process. This relation is however somewhat
hidden in the computation.

\subsection{The Main Lemmas, and Proof of Proposition \protect\ref{Prop_main}%
.}\label{sec-def-Pi}

\noindent For $1\leq k<m,\ k,m\in \mathbb{N}$, let $\mathcal{G}_{k,m}$ be
the set of simple graphs on the set $\{k,k+1,\ldots ,m\}.$ We also write $%
\mathcal{G}_{m}:=\mathcal{G}_{1,m}$ and set%
\begin{equation}
U_{\alpha }(f,g):=1-\mathrm{e}^{-\alpha V(f,g)},\ f,g\in {\mathcal{C}}%
_{1},\qquad \mathrm{and}\,\,\,U_{i,j}^{{{({\alpha }})}}:=U_{\alpha
}(B_{i},B_{j})\qquad i,j\in \mathbb{N}.  \label{Uij}
\end{equation}%
Remark that $\alpha >0$ small implies that $U_{\alpha }$ is small. With $%
H_{N}(B)=\sum_{1\leq i<j\leq N}V(B_{i},B_{j})$ we can expand 
\[
\mathrm{e}^{-\alpha H_{N}(B)}=\prod_{1\leq i<j\leq N}(1-U_{i,j}^{{{({\alpha }%
})}})=\sum_{{\mathfrak{g}}\in \mathcal{G}_{N}}\prod_{(i,j)\in {\mathfrak{g}}%
}(-U_{i,j}^{{{({\alpha }})}}),
\]%
where we identify the graph ${\mathfrak{g}}$ with its vertex set. For
notational convenience, from now on we will suppress the dependence on $%
\alpha $ on the function $\Gamma _{N}=\Gamma _{\alpha ,N}$ defined in %
\eqref{def-GammaN}, on the tilted measure ${\mathbb{Q}}_{N}={\mathbb{Q}}%
_{\alpha ,N}$ defined in \eqref{def-tilted} and on the functions $%
U=U_{\alpha }$ and $U_{i,j}=U_{i,j}^{{({\alpha }})}$ defined in \eqref{Uij}.
Then ${\mathbb{Q}}_{N}$ can be rewritten as 
\begin{equation}
{\mathbb{Q}}_{N}(\mathrm{d}B)=\sum_{{\mathfrak{g}}\in \mathcal{G}%
_{N}}\prod_{(i,j)\in {\mathfrak{g}}}(-U_{i,j})\,\mathbb{P}_{N}(\mathrm{d}B).
\label{Q_N_expansion}
\end{equation}

We call an index $k\in \{1,\ldots ,N\}$ a \emph{breakpoint} of a graph ${%
\mathfrak{g}}\in \mathcal{G}_{N}$ if there is no edge $(i,j)\in {\mathfrak{g}%
}$ with $i\leq k<j$. If a graph has no breakpoint, we call it \emph{%
irreducible}. Remark that we use the notion in a way that is slightly
different from the standard one in lace expansions: For instance, if ${%
\mathfrak{g}}$ contains a bond $(k,j)$ with $j>k,$ then $k$ is not a
breakpoint. We write $\mathcal{J}_{N}$ for the set of irreducible graphs in $%
\mathcal{G}_{N}$. This makes sense only for $N\geq 2.$ For ${\mathfrak{g}}%
\in \mathcal{G}_{N}\backslash \mathcal{J}_{N},$ we denote by $b({\mathfrak{g}%
})$ the smallest breakpoint of ${\mathfrak{g}}$, and write $\mathcal{G}_{N}^{%
{{({k}})}}$ for the set of graphs in $\mathcal{G}_{N}\backslash \mathcal{J}%
_{N}$ for which $k$ is the smallest breakpoint. For convenience, we set $b({%
\mathfrak{g}}):=N$ for ${\mathfrak{g}}\in \mathcal{J}_{N},$ and accordingly $%
\mathcal{G}_{N}^{{{{(N})}}}:=\mathcal{J}_{N}.$

If $\mathbb{E}_{N}$ denotes expectation with respect to the Wiener measure $%
\mathbb{P}_{N}$ on ${\mathcal{C}}_{N}$, we define%
\begin{equation}
\Pi _{N}^{{{({\alpha }})}}(x)=\Pi _{N}(x):=\sum_{{\mathfrak{g}}\in \mathcal{J%
}_{N}}\mathbb{E}_{N}\Big[\prod_{(i,j)\in {\mathfrak{g}}}(-U_{i,j})\mathbf{1}%
_{\{B_{N}\in \mathrm{d}x\}}\Big]\Big/\mathrm{d}x,\qquad N\geq 1,
\label{def-Pi}
\end{equation}%
Remark that for $N=1,\ \mathcal{J}_{1}=\emptyset ,$ and $\Pi _{1}^{{{({%
\alpha }})}}$ simply equals $\varphi .$ Define the Green function of $\Pi $
by%
\begin{equation}
G_{\alpha ,\lambda }^{{\scriptsize (\Pi )}}(x):=\sum_{N=1}^{\infty }\lambda
^{N}\Pi _{N}(x),  \label{G_Pi_Def}
\end{equation}%
provided the series is absolutely summable. The precise formulation for the
second step sketched above is provided by the following lemma.

\begin{lemma}
\label{Le_1}There is a constant $C>0$ (depending only on $d,L,R)$ such that
for any $\alpha ,\lambda >0$ that satisfy%
\begin{equation}
G_{\alpha ,\lambda }^{{\scriptsize (\Gamma )}}(x)\leq 7G(x),\qquad x\in 
\mathbb{R}^{d},  \label{3_bound}
\end{equation}%
(which in particular implies $\lambda \leq \lambda _{\mathrm{cr}}\left(
\alpha \right) $), the following inequality holds: The right-hand side of (%
\ref{G_Pi_Def}) is absolutely summable and 
\begin{equation}
\sum_{N=2}^{\infty }\lambda ^{N}|\Pi _{N}(x)|\leq C\alpha
(1+|x|)^{6-3d},\qquad x\in \mathbb{R}^{d}.  \label{Bound_Laplace_Pi}
\end{equation}
\end{lemma}

\begin{remark}
A crucial point here is that we sum only from $N=2.$ In fact $\Pi
_{1}=\varphi .$
\end{remark}

The proof of Lemma \ref{Le_1} is in Section \ref{Le_1_proof}.

The reason to consider $G_{\lambda }^{{\scriptsize (\Pi )}}$ is that $\delta
_{0}-G_{\lambda }^{{\scriptsize (\Pi )}}$ is essentially an inverse of $%
\delta _{0}+G_{\lambda }^{{\scriptsize (\Gamma )}}$ under convolution. This
is based on the following simple result:

\begin{lemma}
\label{Le_Conv_equation}With $\Gamma _{0}:=\delta _{0}$ and $\Gamma
_{1}=\varphi $ we have 
\begin{equation}
\Gamma _{N}=\sum_{k=1}^{N}\Pi _{k}\star \Gamma _{N-k},\qquad N\geq 1.
\label{Convol_equ}
\end{equation}%
(With the convention $\Gamma _{0}=\delta _{0}$).
\end{lemma}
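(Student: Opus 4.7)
My plan is to decompose the graph sum defining $\Gamma_N$ according to the smallest breakpoint of the graph and then apply the Markov property of Brownian motion at that breakpoint, exploiting the translation invariance of the pair interaction $V$.

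The starting point is the representation
$$
\Gamma_N(x) = \sum_{\graph\in\mathcal{G}_N}\E_N\Big[\prod_{(i,j)\in\graph}(-U_{i,j})\,\1_{\{B_N\in\d x\}}\Big]\Big/\d x,
$$
coming from \eqref{Q_N_expansion} and \eqref{def-GammaN}. I partition $\mathcal{G}_N = \bigsqcup_{k=1}^N \mathcal{G}_N^{\ssup k}$ by smallest breakpoint, with $\mathcal{G}_N^{\ssup N} = \mathcal{J}_N$. The combinatorial key is that any $\graph \in \mathcal{G}_N^{\ssup k}$ decomposes uniquely into $\graph_1$ on $\{1,\ldots,k\}$ and $\graph_2$ on $\{k+1,\ldots,N\}$, where $\graph_1 \in \mathcal{J}_k$ is forced to be irreducible by minimality of $k$, and $\graph_2$ ranges over arbitrary simple graphs on $\{k+1,\ldots,N\}$; the boundary cases $k=1$ (isolated vertex $1$, $\graph_1 = \emptyset$) and $k = N$ ($\graph_2 = \emptyset$) are handled by the conventions $\mathcal{J}_1 := \{\emptyset\}$ and $\Gamma_0 := \delta_0$. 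Because no edge of $\graph$ crosses $k$, the edge weight $\prod_{(i,j)\in\graph}(-U_{i,j})$ factorizes over $\graph_1$ and $\graph_2$.

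Next, I fix $k$ and apply the Markov property at time $k$: setting $y = B(k)$ and $\widetilde B(s) := B(k+s) - y$, the process $\widetilde B$ is a Brownian motion independent of $\mathcal{F}_k$, and the original legs satisfy $B_{k+j} = y + \widetilde B_j$. Since $V(f,g)$ depends only on $f-g$ (see \eqref{Vdef}), each $U_{k+i,k+j}$ indexed by $\graph_2$ agrees with the corresponding $\widetilde U_{i,j}$ built from $\widetilde B$. Conditioning on $B(k) = y$ and integrating in $y$ converts the factored expectation into a convolution: the sum over $\graph_1 \in \mathcal{J}_k$ produces $\Pi_k(y)$ by \eqref{def-Pi}, and the sum over $\graph_2$ (equivalently, over simple graphs on $\{1,\ldots,N-k\}$) produces $\Gamma_{N-k}(x-y)$ by the same expansion applied to $\widetilde B$. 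For $k = 1$ this yields $\varphi \star \Gamma_{N-1}$ (using that $B(1)$ has density $\varphi$ and the empty $\graph_1$ contributes weight $1$); for $2 \leq k \leq N$ it yields $\Pi_k \star \Gamma_{N-k}$ (with $\Pi_N \star \delta_0 = \Pi_N$ at $k = N$). Summing over $k$ gives \eqref{Convol_equ}. The only nontrivial bookkeeping is the combinatorial bijection $\mathcal{G}_N^{\ssup k} \cong \mathcal{J}_k \times \mathcal{G}_{N-k}$; I do not expect any genuine analytic obstacle, since the remaining ingredients (Markov property, translation invariance of $V$, Fubini) are routine.
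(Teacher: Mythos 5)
Your proof is correct and follows essentially the same route as the paper: partition $\mathcal{G}_N$ by smallest breakpoint, observe that $\mathcal{G}_N^{\ssup k}$ is in bijection with $\mathcal{J}_k\times\mathcal{G}_{N-k}$, and use the Markov property together with translation invariance of $V$ to turn the factorized graph sum into the convolution $\Pi_k\star\Gamma_{N-k}$ (with $\Pi_1=\varphi$ and $\Gamma_0=\delta_0$ covering the boundary cases). The only cosmetic difference is that you unify $k=1$ via the convention $\mathcal{J}_1=\{\emptyset\}$, whereas the paper treats $k=1$ and $k=N$ as separate cases; this changes nothing of substance.
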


\begin{proof}
The formula is correct for $N=1$ where $\Gamma _{1}=\Pi _{1}=\varphi .$
Using (\ref{Q_N_expansion}), we have for $N\geq 2$%
\begin{eqnarray*}
\Gamma _{N}(x) &=&\frac{\mathrm{d}}{\mathrm{d}x}\mathbb{E}_{N}\Big[\sum_{{%
\mathfrak{g}}\in \mathcal{G}_{N}}\prod_{(i,j)\in {\mathfrak{g}}}(-U_{ij}){%
\mathbf{1}}_{\{B_{N}\in \mathrm{d}x\}}\Big] \\
&=&\frac{\mathrm{d}}{\mathrm{d}x}\sum_{k=1}^{N}\mathbb{E}_{N}\Big[\sum_{{%
\mathfrak{g}}\in \mathcal{G}_{N}^{(k)}}\prod_{(i,j)\in {\mathfrak{g}}%
}(-U_{ij}){\mathbf{1}}_{\{B_{N}\in \mathrm{d}x\}}\Big].
\end{eqnarray*}

The summand for $k=N$ on the right hand side restricts the summation over ${%
\mathfrak{g}}$ to ${\mathfrak{g}}\in \mathcal{J}_{N}$, which gives for this
summand $\Pi _{N}$. As $\Gamma _{0}=\delta _{0},$ this is the $k=N$ summand
on the right hand side of (\ref{Convol_equ}).

We next consider the summand for $k=1$. We remark that $1$ is a break point
of ${\mathfrak{g}}$ if and only if there is no edge in ${\mathfrak{g}}$
starting in $1$. In the product $\prod_{(i,j)\in {\mathfrak{g}}}(-U_{i,j})$
there is then no interaction between the first leg $B_{1}$ and any other
leg. Using translation invariance, we get%
\[
\frac{\mathrm{d}}{\mathrm{d}x}\sum_{{\mathfrak{g}}\in \mathcal{G}_{N}^{{{({1}%
})}}}\mathbb{E}_{N}\Big[\prod_{(i,j)\in {\mathfrak{g}}}(-U_{i,j}){\mathbf{1}}%
_{\{B_{N}\in \mathrm{d}x\}}\Big]=\int \varphi (y)\Gamma _{N-1}(x-y)\,\mathrm{%
d}y 
\]%
which as $\Pi _{1}=\varphi $ is the first summand on the rhs of (\ref%
{Convol_equ}).

This argument can be extended to handle the summation over graphs in $%
\mathcal{G}_{N}^{{{({k}})}}$, for $2\leq k\leq N-1$. A graph ${\mathfrak{g}}$
is in $\mathcal{G}_{N}^{{{({k}})}}$ if and only if it is the union of an
irreducible graph on $\{1,\ldots ,k\}$ and an arbitrary graph on $%
\{k+1,\ldots ,N\}$. Summing over all the possibilities, we get for $2\leq
k\leq N-1$%
\[
\frac{\mathrm{d}}{\mathrm{d}x}\sum_{{\mathfrak{g}}\in \mathcal{G}_{N}^{{{({k}%
})}}}\mathbb{E}_{N}\Big[\prod\nolimits_{(i,j)\in {\mathfrak{g}}}(-U_{i,j}){%
\mathbf{1}}_{\{B_{N}\in \mathrm{d}x\}}\Big]=\int \Pi _{k}(y)\Gamma
_{N-k}(x-y)\,\mathrm{d}y. 
\]%
We remark that for $k=N-1,$ there is no graph on $\{N\},$ and we get $\Pi
_{N-1}\star \varphi .$ Summing over $k$ we conclude the proof of the lemma.
\end{proof}

\begin{remark}
A consequence of the convolution equation \eqref{Convol_equ} is that%
\begin{eqnarray*}
G_{\lambda }^{{\scriptsize (\Gamma )}} &=&\sum_{N=1}^{\infty }\lambda
^{N}\Gamma _{N}=\sum_{N=1}^{\infty }\lambda ^{N}\sum_{k=1}^{N}\Pi _{k}\star
\Gamma _{N-k} \\
&=&\sum_{k=1}^{\infty }\lambda ^{k}\Pi _{k}\star \bigg(\sum_{N=k}^{\infty
}\lambda ^{N-k}\Gamma _{N-k}\bigg)\newline
=G_{\lambda }^{{\scriptsize (\Pi )}}\star (\delta _{0}+G_{\lambda }^{%
{\scriptsize (\Gamma )}}) \\
&=&G_{\lambda }^{{\scriptsize (\Pi )}}+G_{\lambda }^{{\scriptsize (\Pi )}%
}\star G_{\lambda }^{{\scriptsize (\Gamma )}},
\end{eqnarray*}%
i.e.%
\begin{equation}
-G_{\lambda }^{{\scriptsize (\Pi )}}\star G_{\lambda }^{{\scriptsize (\Gamma
)}}+G_{\lambda }^{{\scriptsize (\Gamma )}}-G_{\lambda }^{{\scriptsize (\Pi )}%
}=0,  \label{Basic_Green_identity}
\end{equation}%
at least if $G_{\lambda }^{{{({\Pi }})}}$ and $G_{\lambda }^{{{({\Gamma }})}}
$ are integrable which is the case for $\lambda <\lambda _{\mathrm{cr}}.$

This equation can be rewritten as%
\[
(\delta _{0}-G_{\lambda }^{{\scriptsize (\Pi )}})\star (\delta
_{0}+G_{\lambda }^{{\scriptsize (\Gamma )}})=\delta _{0}.
\]
\end{remark}

The precise formulation of the third step for the proof of Proposition \ref%
{Prop_main} is provided by the following lemma.

\begin{lemma}
\label{Le_2}Assume that $\left\{ \Pi _{N}\right\} _{N\geq 2}$ satisfies \eqref%
{Bound_Laplace_Pi} for $\lambda <\lambda _{\mathrm{cr}}(\alpha )$ with the
constant $C$ depending only on $d,L,R.$ If $\alpha $ is small enough, then
for every $\lambda <\lambda _{\mathrm{cr}}(\alpha )$ there exists a unique
continuous, bounded, rotational symmetric function $F_{\alpha }\in L_{1}$
that satisfies%
\begin{equation}
-G_{\alpha ,\lambda }^{{\scriptsize (\Pi )}}\star F_{\alpha ,\lambda
}+F_{\alpha ,\lambda }=G_{\alpha ,\lambda }^{{\scriptsize (\Pi )}}
\label{Eq_F}
\end{equation}%
and 
\begin{equation}
F_{\alpha ,\lambda }(x)\leq 2G(x),\qquad x\in \mathbb{R}^{d}.
\label{bound_F}
\end{equation}
\end{lemma}

The proof of Lemma \ref{Le_2} is given in Section \ref{Le_2_proof}.

\begin{proof}[\textbf{Proof of Proposition \protect\ref{Prop_main} (assuming
Lemma \protect\ref{Le_1} and Lemma \protect\ref{Le_2})}]
If $f_{\alpha }(\lambda )\leq 3,$ the argument leading to (\ref{5_G_est})
implies that $G_{\alpha ,\lambda }^{{{({\Gamma }})}}\leq 7G$. By Lemma \ref%
{Le_1}%
\[
\sup_{x}\frac{\left\vert \sum\nolimits_{N=2}^{\infty }\lambda ^{N}\Pi
_{N}\left( x\right) \right\vert }{(1+|x|)^{6-3d}}\leq C\alpha .
\]%
By Lemma \ref{Le_2}, if $\alpha $ is small enough, there is, for any $%
\lambda <\lambda _{\mathrm{cr}}(\alpha )$, a unique $F_{\alpha ,\lambda }$
that satisfies \eqref{Eq_F}. By (\ref{Basic_Green_identity}), we have $%
F_{\alpha ,\lambda }=G_{\alpha ,\lambda }^{{{({\Gamma }})}},$ and then by %
\eqref{bound_F}, $G_{\lambda }^{{{({\Gamma }})}}\leq 2G.$ This implies that $%
f_{\alpha }(\lambda )\leq 2$ and finishes the proof.
\end{proof}

\section{Proof of Lemma \protect\ref{Le_1}: Lace Expansion\label{Le_1_proof}}

\noindent In the standard version of lace expansions, one considers sets of
paths that visit a single point on the lattice possibly several times. This
is not the case in our situation, since the steps of our random walk have a
Lebesgue density. What we do instead is to consider continuous-time paths
that on time intervals of order one can be close together. The closeness is
measured by the following functions $u_{n,\alpha },\ n\geq 2,$ depending on $%
2n$ locations:%
\begin{equation}
u_{n,\alpha }(\mathbf{x}):=\int_{\mathcal{C}_{1}^{n}}\prod_{i=1}^{n-1}U_{%
\alpha }(f_{i},f_{i+1})\prod_{i=1}^{n}\mu _{x_{2i-1},x_{2i}}^{\ssup 1}(%
\mathrm{d}f_{i})\ \mathrm{for}\,\,\mathbf{x}\in (\mathbb{R}^{d})^{2n},
\label{def-un}
\end{equation}%
where $\mu _{x,y}^{\ssup 1}$ is defined in  \eqref{nnBBM}.

In words, this is (up to normalization) the expectation over a vector of $n$
independent Brownian bridges from $x_{2i-1}$ to $x_{2i}$ for $i=1,\dots ,n$
with interactions between each two subsequent ones of them. Clearly, $%
u_{n,\alpha }$ is invariant under shifting the components of $\mathbf{x}%
=(x_{1},\ldots ,x_{2n})$ by a single vector $y\in \mathbb{R}^{d}$. Also, $%
U_{\alpha }(f,g)$ is close to $0$ unless $f$ and $g$ are close together.
Therefore, $\ u_{n,\alpha }(\mathbf{x})\approx 0$ unless the components of $%
\mathbf{x}$ are close together. $\ u_{n,\alpha }$ is also small of $\alpha $
is small. The quantitative estimates are expressed in Lemma \ref{Le_Est_u}
in Section \ref{Sect_Technical_est}.

Again for notational simplicity, we will usually drop the parameter $\alpha $
appearing in $u_{n,\alpha }$, but the reader should be aware that the
parameter is present in essentially every expression.\hfill 

The basic reason for using the modification of the usual lace expansion is
that if in the expansion (\ref{Q_N_expansion}) a graph has two bonds which
share a point $k\in \mathbb{N}$, then this part in (\ref{Q_N_expansion})
still contains for the continuous time process a complicated interaction on
the interval $\left[ k-1,k\right] .$ For that reason we require for $k$
being a break point that there is a time gap of length $1.$

\subsection{The Set of Laces $\mathbf{{\mathcal{L}}_{\mathit{N}}}$ and Their
Characterization.}

Recall from Section \ref{sec-def-Pi} that $\mathcal{G}_{N}$ denotes the set
of simple graphs on $\{1,\dots ,N\}$ and $\mathcal{J}_{N}$ denotes the set
of irreducible graphs (i.e., graphs with no breakpoint) in $\mathcal{G}_{N}$
for $N\geq 2$. We also recall the function $\Pi _{N}$ defined in %
\eqref{def-Pi}. For the estimation of $\Pi _{N}$, we are going to introduce
now a decomposition (called a \emph{lace expansion}) of $\mathcal{J}_{N}$
according to the minimal number of edges the graph has so that it is still
irreducible. We write $\mathcal{L}_{N}$ for the set of \textbf{laces}, i.e.,
irreducible graphs that are no longer irreducible when leaving out any edge.
Every non-irreducible graph may have many laces, but we are going to
construct for any ${\mathfrak{g}}\in {\mathcal{J}}_{N}$ a particular lace $%
\mathrm{lace}({\mathfrak{g}})$ that will be crucial in our expansion of $\Pi
_{N}$. Indeed, the starting point of our lace expansion is 
\begin{equation}
\sum_{{\mathfrak{g}}\in {\mathcal{J}}_{N}}\prod_{(i,j)\in {\mathfrak{g}}%
}(-U_{ij})=\sum_{\ell \in {\mathcal{L}}_{N}}\sum_{{\mathfrak{g}}\in {%
\mathcal{J}}_{N}\colon \mathrm{lace}({\mathfrak{g}})=\ell }\prod_{(i,j)\in {%
\mathfrak{g}}}(-U_{ij}),  \label{laceexp1}
\end{equation}%
where ${\mathcal{L}}_{N}$ denotes the set of laces.

Evidently, if the graph on $\{1,\ldots ,N\}$ contains the edge $(1,N)$ then
it is irreducible. We denote the set of graphs that contain this particular
edge by $\mathcal{J}_{N}^{{{({1}})}}$, and for a graph ${\mathfrak{g}}\in 
\mathcal{J}_{N}^{{{({1}})}},\ \mathrm{lace}({\mathfrak{g}}):=\left\{ \left(
1,N\right) \right\} .$

For other irreducible graphs, the construction of its lace is more
complicated. Take ${\mathfrak{g}}\in \mathcal{J}_{N}\setminus \mathcal{J}%
_{N}^{{{({1}})}}.$ ${\mathfrak{g}}$ contains an edge $(1,j)$, as otherwise
it is not irreducible. We take the largest such $j$, which is not $N$, as
otherwise we would have ${\mathfrak{g}}\in \mathcal{J}_{N}^{{{({1}})}}$, and
denote it by $j_{1}.$ This edge $(1,j_{1})$ is the first member of $\mathrm{%
lace}({\mathfrak{g}})$. For the next, we know that there is an edge $%
(i^{\prime },j^{\prime })$ with $i^{\prime }\leq j<j^{\prime },$ as
otherwise ${\mathfrak{g}}$ would not be connected. We take the \textit{%
largest }\textrm{possible }$j^{\prime }$ and afterwards the smallest
possible $i^{\prime }$ to this $j^{\prime }$. We denote this edge by $%
(i_{2},j_{2})\in {\mathfrak{g}}$. If $j_{2}=N$ we are finished with the
construction of $\mathrm{lace}({\mathfrak{g}}):=\left\{ \left(
1,j_{1}\right) ,\left( i_{2},j_{2}=N\right) \right\} .$ Otherwise, we go on
in this way. Finally, we end up with a collection%
\begin{equation}
\ell =\big\{(i_{1}=1,j_{1}),(i_{2},j_{2}),\ldots ,(i_{k},j_{k}=N)\big\}.
\label{typical_lace}
\end{equation}%
This is the uniquely defined $\mathrm{lace}({\mathfrak{g}})$. We write $%
\mathcal{J}_{N}^{{{({k}})}}$ for the set of irreducible graphs whose lace
has $k$ edges.

\begin{lemma}[Characterization of laces]
\label{lem-laceident} Let $\ell \in \mathcal{L}_{N}$ and ${\mathfrak{g}} \in 
\mathcal{J}_{N}.$ Then $\mathrm{lace} ( {\mathfrak{g}} ) =\ell $ if and only
if all the edges $( i,j ) \in {\mathfrak{g}} \backslash \ell $ have the
property that 
\[
\mathrm{lace} ( \ell \cup \{ ( i,j ) \} ) =\ell . 
\]
\end{lemma}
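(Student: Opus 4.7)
My plan is to analyse how the greedy construction of $\mathrm{lace}(\cdot)$ reacts to the removal of edges that do not belong to $\ell$. The core observation is that at every step $s$, the algorithm chooses $(i_s,j_s)$ by first maximising $j'$ over $\{(i',j')\in\graph\colon i'\leq j_{s-1},\ j'>j_{s-1}\}$ and then minimising $i'$ over $\{(i',j_s)\in\graph\colon i'\leq j_{s-1}\}$. Consequently, if $\graph_1\subseteq \graph_2$ and the optimiser $(i_s,j_s)$ produced on $\graph_2$ happens to lie in $\graph_1$ at every step, then the construction on $\graph_1$ reproduces the same sequence: $(i_s,j_s)$ is still achievable in $\graph_1$, while no candidate in $\graph_1\subseteq \graph_2$ can improve on an optimum already attained in the larger graph.

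For the direction ($\Rightarrow$), I would fix $(i,j)\in\graph\setminus\ell$, set $\graph':=\ell\cup\{(i,j)\}\subseteq\graph$, and apply the above observation with $\graph_1=\graph'$ and $\graph_2=\graph$. Since by assumption each optimiser $(i_s,j_s)$ lies in $\ell\subseteq\graph'$, the construction on $\graph'$ delivers $\mathrm{lace}(\graph)=\ell$, as required.

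For the direction ($\Leftarrow$), I would first verify that $\mathrm{lace}(\ell)=\ell$ for every $\ell\in\mathcal{L}_N$: indeed, $\mathrm{lace}(\ell)$ is by construction an irreducible subgraph of $\ell$, and the minimality built into the definition of $\mathcal{L}_N$ forces equality. Granting the extra-edge hypothesis, I then argue by contradiction. Suppose $\mathrm{lace}(\graph)\neq\ell$ and let $s$ be the first step at which the construction on $\graph$ departs from the sequence $(i_1,j_1),(i_2,j_2),\dots$ defining $\ell$. Then there is $(i^\ast,j^\ast)\in\graph$ producing either a strictly larger $j^\ast>j_s$ with $i^\ast\leq j_{s-1}$, or, with $j^\ast=j_s$, a strictly smaller $i^\ast<i_s$. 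Since the construction on $\ell$ itself already yields $(i_s,j_s)$ as optimal at step $s$, the edge $(i^\ast,j^\ast)$ cannot lie in $\ell$, so $(i^\ast,j^\ast)\in\graph\setminus\ell$ and by hypothesis $\mathrm{lace}(\ell\cup\{(i^\ast,j^\ast)\})=\ell$. However, running the construction on $\ell\cup\{(i^\ast,j^\ast)\}$ and inspecting the earliest step $t^\ast\leq s$ at which $(i^\ast,j^\ast)$ becomes a candidate (such $t^\ast$ exists because $i^\ast\leq j_{s-1}$), one sees that the new edge forces a strictly larger $j$ (using $j^\ast\geq j_s>j_{t^\ast}$ when $t^\ast<s$, or the original defect directly when $t^\ast=s$) or a strictly smaller $i$ at step $s$. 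This contradicts $\mathrm{lace}(\ell\cup\{(i^\ast,j^\ast)\})=\ell$.

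The main obstacle lies in the case analysis for the ($\Leftarrow$) direction: pinning down the step $t^\ast$ at which the intruding edge $(i^\ast,j^\ast)$ first interferes with the greedy algorithm, and separately handling the $j$-maximisation defect and the $i$-minimisation defect. A minor preliminary is the verification that $\mathrm{lace}(\ell)=\ell$ on all of $\mathcal{L}_N$, which is short but relies on the fact that the output of the lace construction is itself irreducible, so that minimality of $\ell$ can be invoked.
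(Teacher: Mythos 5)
Your proposal is correct and takes essentially the same greedy/inductive approach as the paper: for ($\Rightarrow$) you make explicit the monotonicity of the lace construction under subgraph inclusion (which the paper dismisses as "evident"), and for ($\Leftarrow$) your "first step of departure" argument is the same edge-by-edge induction the paper uses, combined with the same invocation of lace minimality to finish. The preliminary check that $\mathrm{lace}(\ell)=\ell$ via the fact that a supergraph of an irreducible graph is irreducible is a small but sound addition that the paper leaves implicit.
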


\begin{proof}
If $\mathrm{lace} ( {\mathfrak{g}} ) =\ell $ then evidently all edges $( i,j
) \in {\mathfrak{g}} \backslash \ell $ satisfy $\mathrm{lace} ( \ell \cup \{
( i,j ) \} ) =\ell .$

For the other direction, assume $\ell \in \mathcal{L}_{N}$ satisfies $\ell
\subset {\mathfrak{g}} ,$ and $\mathrm{lace} ( \ell \cup \{ ( i,j ) \} )
=\ell $ for all $( i,j ) \in {\mathfrak{g}} \backslash \ell .$ We want to
prove that $\mathrm{lace} ( {\mathfrak{g}} ) =\ell .$ (we remind the reader
that ${\mathfrak{g}} $ may have many subsets which are laces). Write $\ell $
in the form (\ref{typical_lace}). If $( 1,j_{1} ) \notin \mathrm{lace} ( {%
\mathfrak{g}} ) ,$ then ${\mathfrak{g}} $ would have to contain an edge $(
1,k ) $ with $k>j_{1},$ and then $\mathrm{lace} ( \ell \cup \{ ( 1,k ) \} )
\neq \ell .$ So $( 1,j_{1} ) \in \mathrm{lace} ( {\mathfrak{g}} ) .$ Next,
if $( i_{2},j_{2} ) \notin \mathrm{lace} ( {\mathfrak{g}} ) , $ we would
have an edge $( i^{\prime },j^{\prime } ) $ with either $j^{\prime }>j_{2},$
or ($j^{\prime }=j_{2}$ and $i^{\prime }<i_{2}$). In both cases $\mathrm{lace%
} ( \ell \cup \{ ( i^{\prime },j^{\prime } ) \} ) \neq \ell ,$ and we
conclude $( i_{2},j_{2} ) \in \mathrm{lace} ( {\mathfrak{g}} ) .$ In this
way it follows that all edges in $\ell $ are members of $\mathrm{lace} ( {%
\mathfrak{g}} ) ,$ i.e., $\ell \subset \mathrm{lace} ( {\mathfrak{g}} ) .$
But then, if $\mathrm{lace} ( {\mathfrak{g}} ) $ would contain an edge $%
\notin \ell ,$ it would not be a lace, by the minimality property of laces.
\end{proof}

We write for $\ell \in \mathcal{L}_{N}$%
\[
\mathcal{C}_{N}(\ell ):=\big\{(i,j)\notin \ell \colon \mathrm{lace}(\ell
\cup \{(i,j)\})=\ell \big\},
\]%
and call each bond in $\mathcal{C}_{N}(\ell )$ \textit{compatible }with $%
\ell $. In this terminology, Lemma \ref{lem-laceident} can therefore be
rephrased by saying 
\begin{equation}
\mathrm{lace}({\mathfrak{g}})=\ell \qquad \Longleftrightarrow \qquad {%
\mathfrak{g}}\backslash \ell \subset \mathcal{C}_{N}(\ell ).
\label{char_lace}
\end{equation}%
In particular, \eqref{char_lace} implies that for a fixed lace $\ell \in 
\mathcal{L}_{N},$%
\begin{eqnarray*}
\sum_{{\mathfrak{g}}\colon \mathrm{lace}({\mathfrak{g}})=\ell
}\prod_{(i,j)\in {\mathfrak{g}}}(-U_{ij}) &=&\bigg[\prod\nolimits_{(i,j)\in
\ell }(-U_{ij})\bigg]\sum_{{\mathfrak{g}}\subset \mathcal{C}_{N}(\ell
)}\prod_{(i,j)\in {\mathfrak{g}}}(-U_{ij}) \\
&=&\bigg[\prod\nolimits_{(i,j)\in \ell }(-U_{ij})\bigg]\prod_{(i,j)\in 
\mathcal{C}_{N}(\ell )}(1-U_{ij}).
\end{eqnarray*}%
Now define 
\begin{equation}
\Pi _{N}^{{\scriptsize (k)}}(x):=\frac{\mathrm{d}}{\mathrm{d}x}\sum_{{%
\mathfrak{g}}\in \mathcal{J}_{N}^{{{({k}})}}}\mathbb{E}_{N}\Big[%
\prod\nolimits_{(i,j)\in {\mathfrak{g}}}(-U_{ij}){\mathbf{1}}_{\{B_{N}\in 
\mathrm{d}x\}}\Big],\qquad \mbox{so that}\,\,\Pi _{N}=\sum_{k=1}^{\infty
}\Pi _{N}^{{{({k}})}}.  \label{def-Pi-k}
\end{equation}%
Since $\Pi _{N}^{{{({k}})}}=0$ for large enough $k,$ the above sum is
finite, and 
\[
\sum_{N=1}^{\infty }\lambda ^{N}|\Pi _{N}(x)|\leq \sum_{k=1}^{\infty
}\sum_{N=1}^{\infty }\lambda ^{N}|\Pi _{N}^{{{({k}})}}(x)|.
\]%
We are going to estimate the summands on the right-hand side individually
for every $k$. To control the higher order terms, we first characterise the
structure of a lace. The simple proof of the following lemma is left to the
reader.

\begin{lemma}
\label{lemma-restrict} Any lace 
\[
\ell =\big\{(i_{1},j_{1}),(i_{2},j_{2}),\ldots ,(i_{n},j_{n})\big\}
\]%
with $n\geq 3$ edges, where $i_{1}=1,\ j_{n}=N$ and $i_{k}<j_{k}$ for all $%
k, $ satisfies 
\begin{equation}
1=i_{1}<i_{2}\leq j_{1}<i_{3}\leq j_{2}<\cdots <i_{n-1}\leq
j_{n-2}<i_{n}\leq j_{n-1}<j_{n}=N.  \label{lace_restr}
\end{equation}
\end{lemma}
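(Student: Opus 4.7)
The plan is to derive the entire staircase \eqref{lace_restr} from one consequence of the lace minimality: in a lace $\ell$, each edge $e_k=(i_k,j_k)$ must \emph{uniquely cover} some integer $v\in\{1,\ldots,N-1\}$, in the sense that $i_k\le v<j_k$ while $v\notin[i_m,j_m)$ for every other edge $e_m\in\ell$, $m\ne k$. Indeed, if no such $v$ existed, then removing $e_k$ would leave all of $\{1,\ldots,N-1\}$ covered, contradicting the defining property that $\ell\setminus\{e_k\}$ has a breakpoint. The complementary input is the irreducibility of $\ell$ itself: every $v\in\{1,\ldots,N-1\}$ is contained in some $[i_m,j_m)$. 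The whole lemma follows from playing these two principles against each other.

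Step by step, I would first observe that the $i$-coordinates are pairwise distinct, and likewise the $j$-coordinates. Two edges sharing a left (respectively right) endpoint yield nested intervals, and the one with the smaller interval uniquely covers nothing. I would then relabel the edges so that $j_1<j_2<\cdots<j_n$; strict monotonicity of the $i$-values is automatic, because $i_{k+1}\le i_k$ (together with $j_{k+1}>j_k$) would force $[i_k,j_k)\subsetneq[i_{k+1},j_{k+1})$ and again kill the unique coverage of $e_k$. This produces $1=i_1<i_2<\cdots<i_n$ and $j_1<j_2<\cdots<j_n=N$, using the hypothesis $i_1=1$, $j_n=N$. The overlap condition $i_{k+1}\le j_k$ is then the no-breakpoint condition applied to the integer $v=j_k$: if $i_{k+1}>j_k$, then $v\ge j_m$ for $m\le k$ and $v<i_m$ for $m\ge k+1$, so $v$ would be an uncovered breakpoint of $\ell$.

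The final and most substantive step is the separation inequality $j_k<i_{k+2}$ for $k\le n-2$. I would argue by contradiction: if $i_{k+2}\le j_k$, then the intervals $[i_k,j_k)$ and $[i_{k+2},j_{k+2})$ overlap and together cover $[i_k,j_{k+2})$, which by the previously established monotonicities contains $[i_{k+1},j_{k+1})$. Hence every vertex covered by $e_{k+1}$ is also covered by $e_k$ or $e_{k+2}$, so $e_{k+1}$ has no uniquely covered vertex, contradicting the minimality of $\ell$. Combining this separation with the monotonicities and the hypothesis $j_n=N$ produces the full chain in \eqref{lace_restr}.

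I expect no real obstacle in the argument — it is a purely combinatorial verification. The one point that requires a little care is bookkeeping about the admissible range of the witness vertices: in the overlap step the witness $v=j_k$ must lie in $\{1,\ldots,N-1\}$, which uses $k<n$ and $j_k<j_n=N$; in the separation step, any vertex uniquely covered by $e_{k+1}$ is automatically inside $\{1,\ldots,N-1\}$ because $1\le i_{k+1}$ and $j_{k+1}\le j_n=N$. No probabilistic or analytic input from the lace-expansion machinery is used here.
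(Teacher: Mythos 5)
Your proof is correct. The paper explicitly omits the proof of this lemma (``The simple proof of the following lemma is left to the reader''), so there is nothing to compare against, but your argument is the standard one and works cleanly. Your key observation --- that in a lace each edge $e_k$ must uniquely cover some $v\in\{1,\ldots,N-1\}$ in the sense that $i_k\le v<j_k$ while $v\notin[i_m,j_m)$ for all $m\neq k$ --- is exactly the right way to extract all of \eqref{lace_restr} from minimality plus irreducibility. All five steps (distinctness of the $i$'s and $j$'s, joint monotonicity after relabeling, the overlap condition $i_{k+1}\le j_k$ from irreducibility at $v=j_k$, the separation $j_k<i_{k+2}$ from unique coverage of $e_{k+1}$, and the endpoint bookkeeping $i_1=1$, $j_n=N$) are sound; in particular the separation step correctly uses that $[i_k,j_k)\cup[i_{k+2},j_{k+2})=[i_k,j_{k+2})\supseteq[i_{k+1},j_{k+1})$ once $i_{k+2}\le j_k$ is assumed together with the already-established monotonicities. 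One tiny wording nit: in that step you say the two intervals ``overlap'', but if $i_{k+2}=j_k$ they merely abut; the union is still $[i_k,j_{k+2})$, so the argument is unaffected.
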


\begin{remark}
The statement of the above lemma is in disagreement with the corresponding
ones for the standard used lace expansions. In our case, there is the
possibility to have $i_{k+1}=j_{k},$ but there are no other possibilities
for equality. To see the point: In the usually used expansions, one can have
for a lace with $3$ edges (starting with $i_{1}=1$)%
\[
1=i_{1}<i_{2}<j_{1}=i_{3}<j_{2}<j_{3}=N. 
\]%
This is not possible in our case, as $\{(i_{1},j_{1}),(i_{3},j_{3})\}$ would
be irreducible, and therefore it would be the lace in this situation. For
essentially a similar reasoning, $i_{k+1}=j_{k}$ is not possible in the
standard versions, but it is not excluded in our case
\end{remark}

\subsection{Types of Laces and Their Classification.}\label{sec-types-of-laces}

\noindent We need to make a finer disctinction of laces, on base of Lemma %
\ref{lemma-restrict}. We say that two neighboring edges $( i_{k},j_{k} ) ,\
( i_{k+1},j_{k+1} ) $ in the lace \textit{touch }if $j_{k}= i_{k+1}$. Then
we say that $( i_{k+1},j_{k+1} ) $ touches $( i_{k},j_{k} ) $ from the
right. Otherwise, i.e., when $i_{k+1}<j_{k},$ we say that the edges \textit{%
intersect. }

According to Lemma \ref{lemma-restrict}, we classify the laces on $%
\{1,\ldots ,N\}$ of length $n$ into different types, dictated by %
\eqref{lace_restr}. Given a lace $\ell $ with $n$ edges, we order the set of
edges increasingly according to the left end, writing $e_{1},\ldots ,e_{n}$
for the edges. We split the set of edges into disjoint chains%
\begin{equation}
\chi _{j}=(e_{n_{j-1}+1},\ldots ,e_{n_{j}}),\quad j=1,\dots ,k,\qquad %
\mbox{with }0=n_{0}<n_{1}<n_{2}<\dots <n_{k}=n.  \label{chain}
\end{equation}%
Within a chain, and if a chain has at least two elements, $e_{i+1}$ touches $%
e_{i}$ from the right for $n_{j-1}\leq i\leq n_{j}-1$, but $e_{n_{j}+1}$
intersects $e_{n_{j}},$ $j=1,\ldots ,k-1.$ We write%
\[
\tau :=(n_{1},n_{2}-n_{1},\ldots ,n_{k}-n_{k-1}) 
\]%
and call it the \textit{type }of the lace. In case $\tau =(1,\ldots ,1),$
there are no touching edges, and if $\tau =(n),$ the lace is one chain of $n$
touching edges. The set of types is denoted by $\mathfrak{T}_{n}.$ We write $%
\mathcal{L}_{N}^{{{({n,\tau }})}}$ for the set of laces of type $\tau $ on $%
\{1,\ldots ,N\}$ with $n$ edges. We split $\Pi _{N}^{{{({n}})}}$ defined in %
\eqref{def-Pi-k} according to the different types by writing%
\[
\Pi _{N}^{{\scriptsize (n)}}=\sum_{\tau \in \mathfrak{T}_{n}}\Pi _{N}^{%
{\scriptsize (n,\tau )}}, 
\]%
where $\Pi _{N}^{{{({n,\tau }})}}$ is the sum over graphs whose lace have $n$
edges and are of type $\tau $.

Given a lace $\ell $ with $n$ edges, the set of endpoints of its edges can
be written as increasing sequence:%
\begin{equation}
\mathbf{s}(\ell ):=(s_{i})_{i=1,\dots ,m},\qquad \mbox{where }%
1=s_{1}<s_{2}<\cdots <s_{m-1}<s_{m}=N,  \label{def-bf-s}
\end{equation}%
and $m$ depends on $n$ and the type of the lace. If all neighboring edges
intersect, then $m=2n.$ For every touching of neighbor edges, $m$ is reduced
by one.\footnote{%
For instance, if the type is $(2,3,1,2),$ then $n=8$ and $m=12$.}

If we fix $\mathbf{s}(\ell ),$ then we write $\Pi _{N}^{{{({n,\tau ,\mathbf{s%
}}})}}.$

Any $e\in \ell $ is written $e=(s_{\tau _{\star }(e)},s_{\tau ^{\star }(e)})$
with $\tau _{\star }(e)<\tau ^{\star }(e)$. Then $\ell =((s_{\tau _{\star
}(e)},s_{\tau ^{\star }(e)}))_{e\in \ell }$. In an abuse of notation, we
write $\ell $ also as a sequence 
\begin{equation}
((\tau _{\star }(i),\tau ^{\star }(i)))_{i=1,\ldots ,n}  \label{eq-star}
\end{equation}%
where $i$ stands for the $i$-th edge, these being ordered increasingly.%
\footnote{%
For instance if a lace of $n=5$ edges has type type $(1,3,1)$ then the
sequence of pairs $((\tau _{\star }(i),\tau ^{\star }(i)))_{i=1}^{5}$ is $%
((1,3),(2,4),(4,5),(5,7),(6,8))$.}

We define $\ell _{N}^{{\scriptsize (n,\tau ,s)}}$ to be the unique lace of
type $\tau $ that has endpoints $\mathbf{s}(\ell )=\mathbf{s}$ with given $%
\mathbf{s}=(s_{i})_{i=1,\dots ,m}$ as in \eqref{def-bf-s}.

\subsection{Proof of Lemma \protect\ref{Le_1}.}

\noindent With the notation set up in the previous section, we want to
estimate%
\[
\Pi _{N}^{{\scriptsize (n,\tau )}}(y)=\frac{\mathrm{d}}{\mathrm{d}y}%
\sum_{\ell \in \mathcal{L}_{N}^{{{({n,\tau }})}}}\mathbb{E}_{N}\bigg[\bigg(%
\prod_{(i,j)\in \ell }(-U_{ij})\prod_{(i,j)\in \mathcal{C}_{N}(\ell
)}(1-U_{ij})\mathbf{1}_{\{B_{N}\in \mathrm{d}y\}}\bigg].
\]%
\begin{equation}
\lambda ^{N}\Pi _{N}^{{\scriptsize (n,\tau )}}(y)=\sum_{\mathbf{s}}\Big[%
\prod\nolimits_{i=1}^{m}\lambda ^{s_{i}-s_{i-1}}\Big]\Pi _{N}^{{\scriptsize %
(n,\tau ,\mathbf{s})}}(y),  \label{lace_est_1}
\end{equation}%
where%
\[
\Pi _{N}^{{\scriptsize (n,\tau ,\mathbf{s})}}(y):=\frac{\mathrm{d}}{\mathrm{d%
}y}\sum_{\ell \in \mathcal{L}_{N}^{{{({n,\tau ,\mathbf{s}}})}}}\mathbb{E}_{N}%
\bigg[\prod_{(i,j)\in \ell }(-U_{ij})\prod_{(i,j)\in \mathcal{C}_{N}(\ell
)}(1-U_{ij})\mathbf{1}_{\{B_{N}\in \mathrm{d}y\}}\bigg].
\]%
For $N=1$ there are no laces, and $\Pi _{1}=\varphi .$ If $N\geq 2,$ the set
of laces is $\neq \emptyset .$ In the following, we always take $N\geq 2,$ $%
n\geq 1.$ We start with estimating 
\begin{eqnarray}
\left\vert \Pi _{N}^{{\scriptsize (n,\tau ,\mathbf{s})}}(y)\right\vert 
&\leq &\sum_{\ell \in \mathcal{L}_{N}^{{{({n,\tau ,\mathbf{s}}})}}}\frac{%
\mathrm{d}}{\mathrm{d}y}\mathbb{E}_{N}\bigg[\big(\prod\nolimits_{(i,j)\in
\ell }U_{ij}\big)\prod_{j=1}^{m-1}\prod_{s_{j}<u<v\leq s_{j+1}-1}(1-U_{uv}){%
\mathbf{1}}_{\{B_{N}\in \mathrm{d}y\}}\bigg ]  \label{lace_est_2} \\
&=&:g_{N}^{{\scriptsize (n,\tau ,\mathbf{s})}}(y).  \nonumber
\end{eqnarray}%
We represent $g_{N}^{{\scriptsize (n,\tau ,\mathbf{s})}}(y)$ in terms of an
integration over $2m-2$ variables $x_{2},\ldots ,x_{2m-1}\in \mathbb{R}^{d}$
representing the positions of the Brownian motion at times $s_{1}=1,\
s_{2}-1,\ s_{2},\ldots ,\ s_{m}-1=N-1.$ Thus, $s_{1}-1=0$ and also, $B_{0}=0$
and at time $s_{m}=N,$ the Brownian motion is fixed to be at $y$. We define 
\[
\mathbb{P}_{N,\tau ,\mathbf{s}}^{{\scriptsize (x,y)}}(\cdot ):=\mathbb{P}_{N}%
\Big(\cdot \big|B_{s_{i}-1}=x_{2i-1},B_{s_{i}}=x_{2i}\,\,\mathrm{for}%
\,\,i=1,\ldots ,m\Big)
\]%
with the convention $x_{2m}=y,$ and $x_{1}=0.$ This decomposes the path
measure into independent Brownian bridges of time lengths $1$ and $%
s_{i}-1-s_{i-1}$, with $s_{0}=0$. Recall that $\varphi _{t}$ is the Gaussian
density with variance $t$. Then 
\begin{eqnarray}
g_{N}^{{\scriptsize (n,\tau ,\mathbf{s})}}(y) &=&\int_{(\mathbb{R}%
^{d})^{2m-2}}\mathrm{d}\mathbf{x\ }\mathbb{E}_{N,\tau ,\mathbf{s}}^{%
{\scriptsize (\mathbf{x},y)}}\Big[\prod\nolimits_{(i,j)\in \ell
_{N}^{(n,\tau ,\mathbf{t})}}U_{ij}\Big]  \nonumber \\
&&\quad \times \prod_{j=1}^{m-1}\mathbb{E}_{N,\tau ,\mathbf{s}}^{%
{\scriptsize (\mathbf{x},y)}}\Big[\prod\nolimits_{s_{j}<u<v\leq
s_{j+1}-1}(1-U_{uv})\Big]  \label{est-gamma1} \\
&&\quad \times \prod_{j=1}^{m}\varphi
_{1}(x_{2j}-x_{2j-1})\prod_{j=1}^{m-1}\varphi
_{s_{j+1}-1-s_{j}}(x_{2j+1}-x_{2j}).  \nonumber
\end{eqnarray}%
The integration is only over $x_{2},\ldots ,x_{2m-1}$. Also, we have made
the following convention: it is not excluded that $s_{j}=s_{j-1}+1$ in which
case, the conditional law would not be defined, except when $x_{2j}=x_{2j-1}.
$ This is taken care of by interpreting $\varphi _{0}$ as the Dirac function.

Let us now handle factors appearing on the right hand side of %
\eqref{est-gamma1}. First we observe that 
\begin{equation}
\mathbb{E}_{N,\tau ,\mathbf{s}}^{{\scriptsize (\mathbf{x},y)}}\Big[%
\prod\nolimits_{s_{j}<u<v\leq s_{j+1}-1}(1-U_{ij})\Big]\varphi
_{s_{j+1}-1-s_{j}}(x_{2j+1}-x_{2j})=\Gamma
_{s_{j+1}-s_{j}-1}(x_{2j+1}-x_{2j}),  \label{est-gamma2}
\end{equation}%
if $s_{j+1}\geq s_{j}+2.$ In case $s_{j+1}\geq s_{j}+1,$ the product is over
the empty set and therefore equal to $1,$ and so we have to interpret $%
\Gamma _{0}$ as $\delta ,$ too. Actually, also for $s_{j+1}=s_{j}+2,$ the
product is empty, in which case we arrive at $\Gamma _{1}=\varphi _{1}.$

We next express the factor involving the $U_{ij}$ appearing on the
right-hand side of \eqref{est-gamma1} in terms of the function $u_{n}$
defined in \eqref{def-un}. This factor depends on the type $\tau
=(v_{1},\ldots ,v_{k})\in \mathbb{N}^{k}$ of the lace. If $\chi _{1},\ldots
,\chi _{k}$ are the chains (recall \eqref{chain}) then the terms $%
\prod\nolimits_{(i,j)\in \chi _{r}}U_{ij},$ $r=1,\ldots ,k$ are independent
under the conditional measure $\mathbb{P}_{N,\tau ,\mathbf{s}}^{{\scriptsize %
(\mathbf{x},y)}}.$ Let a chain $\chi $ contain $b$ edges, labelled $%
(s_{i_{1}},s_{i_{2}}),\ (s_{i_{2}},s_{i_{3}}),\ldots ,(s_{i_{b}},s_{i_{b+1}})
$. Then $i_{k+1}-i_{k}=1,2$ or $3,$ depending whether the chain is in
relation to the others. In any case, if $b\geq 3,$ then $i_{k+1}-i_{k}=1$
for $k=2,\ldots ,b-1$. Then by recalling the definition of $u_{n}(\cdot )$
from \eqref{def-un}, 
\begin{equation}
\mathbb{E}_{N,\tau ,\mathbf{s}}^{{\scriptsize (\mathbf{x},y)}}\Big[%
\prod\nolimits_{(i,j)\in \chi }U_{ij}\Big]\prod_{k=1}^{b+1}\varphi
_{1}(x_{i_{k}}-x_{i_{k}-1})=u_{b+1}(x_{i_{1}-1},x_{i_{1}},x_{i_{2}-1},x_{i_{2}},\ldots ,x_{i_{b+1}-1},x_{i_{b+1}}).
\label{est-gamma3}
\end{equation}%
By Lemma \ref{Le_Est_u} we obtain 
\begin{eqnarray*}
&&u_{b+1}(x_{i_{1}-1},x_{i_{1}},\ldots ,x_{i_{b+1}-1},x_{i_{b+1}}) \\
&\leq &C^{b+1}\alpha ^{b}\exp \Big[-\frac{1}{8}\sum%
\nolimits_{k=1}^{b+1}|x_{i_{k}-1}-x_{i_{k}}|^{2}-A%
\sum_{k=1}^{b}|x_{i_{k+1}}-x_{i_{k}}|^{2}\Big].
\end{eqnarray*}

Combining \eqref{est-gamma1}-\eqref{est-gamma3} we therefore get%
\begin{eqnarray*}
g_{N}^{{\scriptsize (n,\tau ,\mathbf{s})}}(y) &\leq &C^{n}\alpha ^{n}\int 
\mathrm{d}\mathbf{x}{\mathbf{\ }}\prod_{j=1}^{m-1}\Gamma
_{s_{j+1}-1-s_{j}}(x_{2j+1}-x_{2j}) \\
&&\quad \times \exp \Big[-\frac{1}{8}\sum%
\nolimits_{j=1}^{m}|x_{2j}-x_{2j-1}|^{2}-A\sum\nolimits_{i=1}^{n}|x_{2\tau
^{\star }(i)}-x_{2\tau _{\star }(i)}|^{2}\Big],
\end{eqnarray*}%
where $A>0$ does not depend on $\alpha ,n,$ or $\tau ,$ and in the last
display we used the notation $\tau ^{\star }(i)$ and $\tau _{\star }(i)$
defined in \eqref{eq-star}.

From the preceding, we get, for any type $\tau $ and any $\mathbf{s}$,
recalling that $s_{1}=1,\ s_{m}=N,$ 
\begin{eqnarray*}
\sum_{N=2}^{\infty }\lambda ^{N}\sum_{\ell \in \mathcal{L}_{N}^{\left(
n,\tau \right) }}g_{N}^{{\scriptsize (n,\tau ,\mathbf{s})}}(y) &\leq
&C^{n}\alpha ^{n}\lambda ^{m-1}\int \mathrm{d}{\mathbf{x\ }}%
\prod_{j=1}^{m-1}\sum_{k=0}^{\infty }\lambda ^{k}\Gamma _{k}(x_{2j-1}-x_{2j})
\\
&&\quad \times \exp \Big[-\frac{1}{8}\sum%
\nolimits_{j=1}^{m}|x_{2j}-x_{2j-1}|^{2}\Big]\exp \Big[-A\sum%
\nolimits_{i=1}^{n}|x_{2\tau ^{\star }(i)}-x_{2\tau _{\star }(i)}|^{2}\Big]
\\
&\leq &C^{n}\alpha ^{n}\int \mathrm{d}{\mathbf{x}}\prod_{j=1}^{m-1}\Big[%
\delta (x_{2j+1}-x_{2j})+G_{\lambda }^{{\scriptsize (\Gamma )}%
}(x_{2j+1}-x_{2j})\Big] \\
&&\quad \times \mathrm{e}^{-\left\vert x_{2}\right\vert ^{2}/8}\exp \Big[-%
\frac{1}{8}\sum\nolimits_{j=1}^{m-1}|x_{2j+2}-x_{2j+1}|^{2}\Big] \\
&&\quad\times \exp \Big[%
-A\sum\nolimits_{i=1}^{n}|x_{2\tau ^{\star }(i)}-x_{2\tau _{\star }(i)}|^{2}%
\Big],
\end{eqnarray*}%
where we recall that $m\leq 2n.$ We integrate now over all the $x_{2j+1},\
1\leq j\leq m-1$. Using the assumption $G_{\lambda }^{{(}{\scriptsize \Gamma
)}}\leq 7G$ from \eqref{3_bound} of Lemma \ref{Le_1}, we get%
\begin{equation}
\int \mathrm{d}x_{2j+1}\bigg[\delta (x_{2j+1}-x_{2j})+G_{\lambda }^{%
{\scriptsize (\Gamma )}}(x_{2j+1}-x_{2j})\bigg]\exp \Big[-\frac{1}{8}%
|x_{2j+2}-x_{2j}|^{2}\Big]\leq Cf_{d}(|x_{2j+2}-x_{2j}|),  \label{est-gamma6}
\end{equation}%
where we abbreviated 
\begin{equation}
f_{d}(x):=(1+|x|)^{-d+2}.  \label{def-fd}
\end{equation}

Combining the last two estimates we obtain%
\begin{equation}
\sum_{N=2}^{\infty }\lambda ^{N}\sum_{\ell \in \mathcal{L}_{N}^{\left(
n,\tau \right) }}g_{N}^{\left( n,\tau ,\mathbf{s}\right) }\left( y\right)
\leq C^{n}\alpha ^{n}\left( \varphi _{1/4}\ast X_{n,\tau }\right) \left(
y\right) ,  \label{g_N_est}
\end{equation}%
where for $n\geq 1$, with renaming $z_{i}:=x_{2i},$ we have written%
\begin{equation}\label{X_n_n>1} 
\begin{aligned}
X_{n,\tau }\left( x\right)  &:=\int\dots \int \d z_{2}\cdots \d z_{m-1} \,  \exp \bigg[-A\sum\nolimits_{i=1}^{n-1}\left\vert z_{\tau
^{\ast }\left( i\right) }-z_{\tau _{\ast }\left( i\right) }\right\vert ^{2}%
\bigg]\exp \bigg[-A\left\vert z_{\tau _{\ast }\left( n\right) }-x\right\vert
^{2}\bigg]  \\
&\qquad \times f_{d}\left( x-z_{m-1}\right) \prod_{j=2}^{m-1}f_{d}\left(
z_{j}-z_{j-1}\right) , 
\end{aligned}
\end{equation}
where $z_{1}:=0.$ We also remark that $m$ in the expression above is a
function of $\tau $ that satisfies $m\left( \tau \right) \leq 2n.$ Also $m=2$
for $n=1,$ and $m\geq 3$ for $n\geq 2.$

We will prove by induction on $n$ that for some constant $C,$ depending only
on $A$, and $d,$ we have, with $F_{d}\left( x\right) :=f_{d}\left( x\right)
^{3}=(1+|x|)^{-3d+6}$ 
\begin{equation}
\sup_{\tau \in \mathfrak{T}_{n}}X_{n,\tau }\left( x\right) \leq
C^{n}F_{d}\left( x\right) ,\qquad n\in \mathbb{N}.  \label{main_bound_L1}
\end{equation}%
We remark that from this, it also follows that%
\[
\sup_{\tau \in \mathfrak{T}_{n}}\left( \varphi _{1/4}\ast X_{n,\tau }\right)
\left( x\right) \leq C^{n}F_{d}\left( x\right) ,\qquad n\in \mathbb{N},
\]%
with a slightly changed constant $C$. Implementing this into (\ref{g_N_est}%
), and as the number of types of laces with $n$ edges is bounded by $2^{n},$
this will prove Lemma \ref{Le_1}.

It remains to prove \eqref{main_bound_L1} by induction. For $n=1$, there is
only one type, and one simply has%
\begin{equation}
X_{1}\left( x\right) =\exp \big[-A\left\vert x\right\vert ^{2}\big]%
f_{d}\left( x\right)  \label{X_1}
\end{equation}%
so (\ref{main_bound_L1}) is evident for $n=1.$

Let $n=2.$ Then there are the two types $\left( 1,1\right) $ and $(2).$ For $%
\tau =\left( 2\right) $%
\begin{eqnarray*}
X_{2}\left( x\right)  &=&\int \mathrm{d}\mathbf{z}\exp \big[-A\left\vert
z\right\vert ^{2}\big]f_{d}\left( z\right) \exp \big[-A\left\vert
x-z\right\vert ^{2}\big]f_{d}\left( x-z\right)  \\
&\leq &\int \mathrm{d}\mathbf{z}\exp \big[-A\left\vert z\right\vert
^{2}-A\left\vert x-z\right\vert ^{2}\big],
\end{eqnarray*}%
which has Gaussian tails, so (\ref{main_bound_L1}) is gain trivial. So we
have only to take care of $\tau =\left( 1,1\right) .$ There we have $m=4,$
and%
\[
X_{2}\left( x\right) =\int \mathrm{d}z_{1}\mathrm{d}z_{2}\,\,\exp \big[-A%
\big(\left\vert z_{2}\right\vert ^{2}+\left\vert x-z_{1}\right\vert ^{2}\big)%
\big]\,\,f_{d}\left( z_{1}\right) f_{d}\left( z_{2}-z_{1}\right) f_{d}\left(
x-z_{2}\right) ,
\]%
which, according to Lemma \ref{Le_Gauss_convol}, is bounded by $C\left(
1+\left\vert x\right\vert \right) ^{-3d+6}=CF_{d}\left( x\right) .$

We assume now $n\geq 3,$ and that \eqref{main_bound_L1} holds up to $n-1$
instead of $n$. We consider first the case where the type $\tau $ starts
with $\left( 1,1,\ldots \right) $ i.e. $\tau =\left( 1,1,\tau ^{\prime
}\right) .$ Then the expression for $X_{n,\tau }$ starts with%
\begin{eqnarray*}
&&\int \mathrm{d}\mathbf{z}\ \exp \big[-A\big(\left\vert z_{2}\right\vert
^{2}+\left\vert z_{4}-z_{1}\right\vert ^{2}+\left\vert
z_{5}-z_{3}\right\vert ^{2}+\cdots \big)\big] \\
&&\qquad \times f_{d}\left( z_{1}\right) f_{d}\left( z_{2}-z_{1}\right)
f_{d}\left( z_{3}-z_{2}\right) f_{d}\left( z_{4}-z_{3}\right) f_{d}\left(
z_{5}-z_{4}\right) \cdot \cdots
\end{eqnarray*}%
or $\left\vert z_{5}-z_{3}\right\vert ^{2}$ replaced by $\left\vert
z_{6}-z_{3}\right\vert ^{2}$ if $n>3$ and the third entry in $\tau $ is $%
\neq 1.$

If the type starts with $\left( 1,1,...\right) $ as assumed here, then the
first three bonds overlap. This situation is similar to the one in the
standard version of the lace expansion. The only difference is the presence
of the factor $\exp \left[ -A\left\vert z_{2}\right\vert ^{2}-A\left\vert
z_{4}-z_{1}\right\vert ^{2}-\cdots \right] $ which replaces the fixed
matching of the endpoints that appears on the standard lattice situation. We
proceed now to remove the second bound in the lace in a standard way, using
Cauchy-Schwarz, and $d\geq 5:$ 
\begin{eqnarray}
&&\int \mathrm{d}z_{1}\mathrm{d}z_{4}\,f_{d}\left( z_{1}\right) f_{d}\left(
z_{2}-z_{1}\right) f_{d}\left( z_{3}-z_{2}\right) f_{d}\left(
z_{4}-z_{3}\right) f_{d}\left( z_{5}-z_{4}\right) \exp \left[ -A\left\vert
z_{4}-z_{1}\right\vert ^{2}\right]  \nonumber \\
&\leq &\sqrt{\int f_{d}^{2}\left( z_{1}\right) f_{d}^{2}\left(
z_{4}-z_{3}\right) \exp \left[ -A\left\vert z_{4}-z_{1}\right\vert ^{2}%
\right] \,\mathrm{d}z_{1}\mathrm{d}z_{4}}  \nonumber \\
&&\times \sqrt{\int f_{d}^{2}\left( z_{2}-z_{1}\right) f_{d}^{2}\left(
z_{5}-z_{4}\right) \exp \left[ -A\left\vert z_{4}-z_{1}\right\vert ^{2}%
\right] \,\mathrm{d}z_{1}\mathrm{d}z_{4}}  \label{Laces_by_CS} \\
&\leq &C\sqrt{\int f_{d}^{2}\left( z_{1}\right) f_{d}^{2}\left(
z_{1}-z_{3}\right) \,\mathrm{d}z_{1}}\sqrt{\int f_{d}^{2}\left(
z_{2}-z_{1}\right) f_{d}^{2}\left( z_{5}-z_{1}\right) \,\mathrm{d}z_{1}} 
\nonumber \\
&\leq &Cf_{d}\left( z_{3}\right) f_{d}\left( z_{5}-z_{2}\right) .  \nonumber
\end{eqnarray}%
In the first inequality, we drop $f_{d}\left( z_{3}-z_{2}\right) \leq 1.$ In
the second, we use 
\begin{equation}
\int f_{d}^{2}\left( x-y\right) \exp \left[ -A\left\vert y\right\vert ^{2}%
\right] \mathrm{d}y\leq Cf_{d}^{2}\left( x\right) .  \label{fd_est}
\end{equation}%
Below, we need a tiny bit of information when $A$ is replace by $A/k,\ k\in 
\mathbb{N}$. It's easy to see that%
\begin{equation}
\int f_{d}^{2}\left( x-y\right) \exp \left[ -\left( A/k\right) \left\vert
y\right\vert ^{2}\right] \mathrm{d}y\leq \theta ^{k}f_{d}^{2}\left( x\right)
\label{modified_fd_est}
\end{equation}%
for some $\theta >1$ (depending also on $A$).

To justify the last step in (\ref{Laces_by_CS}), we note that $%
f_{d}^{2}(x)=(1+|x|)^{4-2d}$ is integrable in $d\geq 5$. Then the first
integral of the fourth line above is a convolution of $f_{d}^{2}$ with
itself, and we can take Fourier transform of this convolution, use a uniform
bound on the Fourier coefficient of $f_{d}^{2}$ and then take inverse
Fourier transform to obtain the desired bound.

Also, if $n>3$ and the third entry in $\tau $ is $\neq 1,$ $z_{5}$ is
replaced by $z_{6}.$ Implementing this inequality then leads to%
\begin{equation}
X_{n,\tau }\leq CX_{n-1,\left( 1,\tau ^{\prime }\right) }.  \label{Ind_Est}
\end{equation}

We next consider the case where $\tau :=\left( 1,t_{2},t_{3},\ldots \right) $
with $t_{2}>1.$ \ Here only the first two bonds of the lace overlap, and the
third touches the second. \ Assume that the bonds number $2,3,...,k+1$ have
the property that bond number $j,\ 3\leq j\leq k+1$ touches bond number $j-1$
from the right, i.e. $t_{2}=k.$ Then the expression we have to integrate (in
order to get $X_{n,\tau }\left( y\right) $ starts with%
\begin{eqnarray*}
&&\exp \left[ -A\left( \left\vert z_{2}\right\vert ^{2}+\left\vert
z_{3}-z_{1}\right\vert ^{2}+\sum\nolimits_{j=3}^{k}\left\vert
z_{j+1}-z_{j}\right\vert ^{2}+\left\vert z_{k+3}-z_{k+1}\right\vert
^{2}+\left\vert z_{k+4}-z_{k+2}\right\vert ^{2}\cdots \right) \right]  \\
&&\times f_{d}\left( z_{1}\right) \cdot f_{d}\left( z_{2}-z_{1}\right) \cdot
\cdots \cdot f_{d}\left( z_{k+2}-z_{k+1}\right) \cdot f_{d}\left(
z_{k+3}-z_{k+2}\right) \cdot \cdots .
\end{eqnarray*}%
We get an upper bound by dropping $f_{d}\left( z_{3}-z_{2}\right) ,$ $%
f_{d}\left( z_{4}-z_{3}\right) $ up to $f_{d}\left( z_{k+2}-z_{k+1}\right) .$
Then we integrate over $z_{3},...,z_{k+1}.$ These variables appear then only
in the Gaussian factor. Integrating that out, we get an upper bound for $%
\int \mathrm{d}z_{3}\cdots \mathrm{d}z_{k+1}$ of the above expression as%
\begin{eqnarray*}
&&\left( \pi /A\right) ^{\left( k-1\right) d/2}k^{-d/2}\exp \left[
-A\left\vert z_{2}\right\vert ^{2}-\frac{A}{k}\left\vert
z_{k+3}-z_{1}\right\vert ^{2}-A\left\vert z_{k+4}-z_{k+2}\right\vert
^{2}-\cdots \right]  \\
&&\times f_{d}\left( z_{1}\right) \cdot f_{d}\left( z_{2}-z_{1}\right) \cdot
f_{d}\left( z_{k+3}-z_{k+2}\right) \cdot \cdots 
\end{eqnarray*}%
This is the same expression as the first in (\ref{Laces_by_CS}) (with
already dropped $f_{d}\left( z_{3}-z_{2}\right) $), with $%
z_{3},z_{4},z_{5},...$ there replaced by $z_{k+2},z_{k+3},z_{k+4},....$
Furthermore, instead of $-A\left\vert z_{4}-z_{1}\right\vert $ in the
exponent, we have now $-\left( A/k\right) \left\vert
z_{k+3}-z_{1}\right\vert ^{2}.$ But we have only to use (\ref%
{modified_fd_est}) instead of (\ref{fd_est}), and in this way get 
\[
X_{n,\tau }\leq C^{t_{2}}X_{n-t_{2},\left( 1,t_{3},...\right) }.
\]

Finally, we have to handle $\tau =\left( k,t_{2},\ldots \right) $ with $%
k\geq 2.$ We first look at the case $\tau =(k)$ in which case $k=n$ if $\tau
\in \mathfrak{T}_{n}.$ Then%
\begin{eqnarray*}
X_{n,\tau }\left( x\right)  &=&\int \mathrm{d}\mathbf{z}\exp \bigg[-A\big(%
\left\vert z_{1}\right\vert ^{2}+\left\vert z_{2}-z_{1}\right\vert
^{2}+\cdots +\left\vert x-z_{n-1}\right\vert \big)\bigg] \\
&&\qquad \times f_{d}\left( z_{1}\right) f_{d}\left( z_{2}-z_{1}\right)
\cdot \cdots \cdot f_{d}\left( x-z_{n-1}\right)  \\
&\leq &\int \mathrm{d}\mathbf{z}\exp \bigg[-A\big(\left\vert
z_{1}\right\vert ^{2}+\left\vert z_{2}-z_{1}\right\vert ^{2}+\cdots
+\left\vert x-z_{n-1}\right\vert \big)\bigg] \\
&\leq &c^{n}\exp \bigg[-\frac{A}{n}\left\vert x\right\vert ^{2}\bigg]
\end{eqnarray*}%
for a constant $c>1$ (depending on $A,d$ only). It is easily checked that $%
\exp \big[-\frac{A}{n}\left\vert x\right\vert ^{2}\big]\leq \left( c^{\prime
}\right) ^{n}F_{d}\left( x\right) $ for some constant $c^{\prime }=c^{\prime
}\left( A\right) >0.$ (actually, $n^{\gamma }F_{d}\left( x\right) $ for some 
$\gamma >0$ is already an upper bound). So \eqref{main_bound_L1} is
satisfied for $X_{n,(n)}$ for all $n,$ if $C$ is chosen appropriately.

Let now $\tau =\left( k,\tau ^{\prime }\right) $ with $k\geq 2.$ Take the
types $\left( k-1\right) \in \mathfrak{T}_{k-1}$ and $\left( 1,\tau ^{\prime
}\right) \in \mathfrak{T}_{n-k+1}.$ Observe that%
\[
X_{n,\left( k,\tau ^{\prime }\right) }=X_{k-1,\left( k-1\right) }\ast
X_{n-k+1,\left( 1,\tau ^{\prime }\right) }.
\]%
As $k\geq 2,$ we can use the induction assumption for the second factor and
obtain%
\[
X_{n,\left( k,\tau ^{\prime }\right) }\leq C_{1}^{k-1}C^{n-k+1}\left(
F_{d}\ast F_{d}\right) \leq \gamma C_{1}^{k-1}C^{n-k+1}F_{d},
\]%
where we use $F_{d}\ast F_{d}\leq \gamma F_{d}$ for some constant $\gamma
=\gamma _{d}>0$ (because $3d-6>d+1$). Therefore, if $\gamma C_{1}\leq C$, we
have%
\[
X_{n,\left( k,\tau ^{\prime }\right) }\leq C^{n}F_{d},
\]%
finishing the induction argument for \eqref{main_bound_L1}. Therefore, Lemma %
\ref{Le_1} is proved.

\section{Deconvolution: Proof of Lemma \protect\ref{Le_2}\label{Le_2_proof}}

Consider the separable Banach space $\left( \mathbf{B},\left\Vert \cdot
\right\Vert \right) $ of bounded continuous functions $f:\mathbb{R}%
^{d}\rightarrow \mathbb{R}$ which have finite norm%
\[
\left\Vert f\right\Vert :=\max \left( \left\Vert f\right\Vert
_{1},\left\Vert f\right\Vert _{\infty },\sup\nolimits_{x}\left\vert
x\right\vert ^{d}\left\vert f\left( x\right) \right\vert \right) <\infty 
\]

\begin{lemma}\label{lem-normesti}
If $f,g\in \mathbf{B}$ then also $f\ast g\in \mathbf{B}.$ Furthermore%
\[
\left\Vert f\ast g\right\Vert \leq 2^{d+1}\left\Vert f\right\Vert \left\Vert
g\right\Vert 
\]
\end{lemma}

\begin{proof}
It is clear that for $f,g\in \mathbf{B}$, also $f\ast g$ is continuous.
Furthermore $\left\Vert f\ast g\right\Vert _{1}=\left\Vert f\right\Vert
_{1}\left\Vert g\right\Vert _{1}\leq \left\Vert f\right\Vert \left\Vert
g\right\Vert ,$ and $\left\Vert f\ast g\right\Vert _{\infty }\leq \left\Vert
f\right\Vert _{1}\left\Vert g\right\Vert _{\infty }\leq \left\Vert
f\right\Vert \left\Vert g\right\Vert .$ Concerning the third part in the
definition of $\left\Vert \cdot \right\Vert ,$ let $x\in \mathbb{R}^{d}.$
Then%
\begin{eqnarray*}
\left\vert \left( f\ast g\right) \left( x\right) \right\vert &=&\left\vert
\int f\left( y\right) g\left( x-y\right) dy\right\vert \leq \int \left\vert
f\left( y\right) \right\vert \left\vert g\left( x-y\right) \right\vert dy \\
&\leq &\int_{y:\left\vert y\right\vert >\left\vert x-y\right\vert
}\left\vert f\left( y\right) \right\vert \left\vert g\left( x-y\right)
\right\vert dy+\int_{y:\left\vert y\right\vert \leq \left\vert
x-y\right\vert }\left\vert f\left( y\right) \right\vert \left\vert g\left(
x-y\right) \right\vert dy.
\end{eqnarray*}%
\[
\left\{ y:\left\vert x-y\right\vert <\left\vert y\right\vert \right\}
\subset \left\{ y:\left\vert y\right\vert >\left\vert x\right\vert
/2\right\} , 
\]%
and%
\[
\left\{ y:\left\vert x-y\right\vert \geq \left\vert y\right\vert \right\}
\subset \left\{ y:\left\vert x-y\right\vert \geq \left\vert x\right\vert
/2\right\} . 
\]%
Therefore,%
\begin{eqnarray*}
\left\vert x\right\vert ^{d}\left\vert \left( f\ast g\right) \left( x\right)
\right\vert &\leq &\left\vert x\right\vert ^{d}\int_{y:2^{d}\left\vert
y\right\vert ^{d}>\left\vert x\right\vert ^{d}}\left\vert f\left( y\right)
\right\vert \left\vert g\left( x-y\right) \right\vert dy \\
&&+\left\vert x\right\vert ^{d}\int_{y:2^{d}\left\vert x-y\right\vert
^{d}\geq \left\vert x\right\vert ^{d}}\left\vert f\left( y\right)
\right\vert \left\vert g\left( x-y\right) \right\vert dy \\
&\leq &2^{d}\left( \sup\nolimits_{y}\left\vert y\right\vert ^{d}\left\vert
f\left( y\right) \right\vert \right) \left\Vert g\right\Vert _{1} \\
&&+2^{d}\left( \sup\nolimits_{y}\left\vert y\right\vert ^{d}\left\vert
g\left( y\right) \right\vert \right) \left\Vert f\right\Vert _{1} \\
&\leq &2^{d+1}\left\Vert f\right\Vert \left\Vert g\right\Vert
\end{eqnarray*}%
This proves the claim.
\end{proof}

We remind the reader on \eqref{Def_G}: $G(x)=\sum_{n=1}^{\infty }\varphi
_{n}(x)$ is the Green function for the random walk with standard normal
increments.

\begin{remark}
\label{Rem_Lemma2}

\begin{enumerate}
\item[a)] Remark that \eqref{Bound_Laplace_Pi} implies 
\begin{equation}
\left\Vert G_{\lambda }^{\Pi }\right\Vert \leq C  \label{GPi_bound}
\end{equation}%
and%
\[
\left\Vert G_{\lambda }^{\Pi }-\lambda \varphi \right\Vert \leq C\alpha . 
\]

\item[b)] If a function $f$ satisfies $\left\Vert f\right\Vert \leq 1,$ then 
$\left\vert f\left( x\right) \right\vert \leq CG\left( x\right) $
\end{enumerate}
\end{remark}

\begin{lemma}\label{lemma-rho}
Assume that $\rho :\mathbb{R}^{d}\rightarrow \mathbb{R}$ is a rotational
invariant continuous function satisfying 
\[
\left\vert \rho \left( x\right) \right\vert \leq \left( 1+\left\vert
x\right\vert \right) ^{-d-4},\ \int \rho \left( x\right) dx=0. 
\]%
Then%
\[
\left\Vert \rho \ast G\right\Vert \leq C<\infty 
\]
\end{lemma}

\begin{proof}
$G,\rho $ are both bounded and continuous, and $\rho $ is integrable.
Therefore $\rho \ast G$ is continuous, and satisfies $\left\Vert \rho \ast
G\right\Vert _{\infty }\leq C<\infty .$ If we check that $\left\vert
x\right\vert ^{d}\left\vert \left( \rho \ast G\right) \left( x\right)
\right\vert \leq C<\infty $, we are done. It suffices to prove this for $%
\left\vert x\right\vert \geq 1$ which we assume for the rest of the proof.
Evidently%
\[
\left( \rho \ast G\right) \left( x\right) =\int_{\left\vert y\right\vert
\geq 1}G\left( y\right) \rho \left( x-y\right) \mathrm{d}y+O\left(
\left\vert x\right\vert ^{-d-4}\right) . 
\]%
We use now that for $\left\vert y\right\vert \geq 1,$%
\[
G\left( y\right) =a\left\vert y\right\vert ^{-d+2}+h\left( y\right) 
\]%
with $\left\vert h\left( y\right) \right\vert \leq C\left( 1+\left\vert
y\right\vert \right) ^{-d-2}$ (Lemma \ref{Le_Edgeworth}). First:%
\begin{eqnarray*}
\int h\left( y\right) \rho \left( x-y\right) dy &=&\int\limits_{\left\vert
y\right\vert \leq \left\vert x\right\vert /2}h\left( y\right) \rho \left(
x-y\right) dy+\int\limits_{\left\vert y\right\vert >\left\vert x\right\vert
/2}h\left( y\right) \rho \left( x-y\right) dy \\
&\leq &\left( 1+\left\vert x\right\vert \right) ^{-d-4}\left\Vert
h\right\Vert _{1}+\left( 1+\left\vert x\right\vert \right) ^{-d-2}\left\Vert
\rho \right\Vert _{1} \\
&\leq &C\left( 1+\left\vert x\right\vert \right) ^{-d-2}.
\end{eqnarray*}%
Therefore, we have to care only for%
\begin{eqnarray*}
\int \rho \left( y\right) \left\vert x-y\right\vert ^{-d+2}dy
&=&\int\limits_{\left\vert y\right\vert \leq \left\vert x\right\vert /2}\rho
\left( y\right) \left\vert x-y\right\vert ^{-d+2}dy+\int\limits_{\left\vert
y\right\vert >\left\vert x\right\vert /2}\rho \left( y\right) \left\vert
x-y\right\vert ^{-d+2}dy \\
&=&:I_{1}+I_{2}.
\end{eqnarray*}%
In the first summand $I_{1}$, we Taylor expand $\left\vert x-y\right\vert
^{-d+2}$ around $x:$%
\begin{eqnarray*}
\left\vert x-y\right\vert ^{-d+2} &=&\left\vert x\right\vert
^{-d+2}+\sum_{i=1}^{d}y_{i}\left( 2-d\right) x_{i}\left\vert x\right\vert
^{-d} \\
&&+\sum_{i,j=1}^{d}y_{i}y_{j}\left( -\left( 2-d\right) d\cdot
x_{i}x_{j}\left\vert x\right\vert ^{-d-2}+\delta _{ij}\left( 2-d\right)
\left\vert x\right\vert ^{-d}\right) +O\left( \left\vert y\right\vert
^{3}\left\vert x\right\vert ^{-d-1}\right) .
\end{eqnarray*}%
We split $I_{1}$ accordingly into $I_{1}=I_{11}+I_{12}+I_{13}+I_{14}.$

For $I_{11}$ we use that $\int \rho \left( x\right) dx=0$ which leads to%
\[
\left\vert I_{11}\right\vert \leq \left\vert x\right\vert
^{-d+2}\int_{\left\vert y\right\vert \geq \left\vert x\right\vert /2}\rho
\left( y\right) dy\leq C\left\vert x\right\vert ^{-d-2}. 
\]%
By the symmetry of $\rho ,$ we have $I_{12}=0,$ and similarly for the
off-diagonal terms $i\neq j$ in $I_{13}.$ For the on-diagonal terms we use
that $\int y_{i}^{2}\rho \left( y\right) dy$ does not depend on $i,$ and
that by the harmonicity of $\left\vert x\right\vert ^{-d+2}$ in $\mathbb{R}%
^{d}-\left\{ 0\right\} $%
\[
\sum_{i=1}^{d}\left[ -\left( 2-d\right) d\cdot x_{i}^{2}\left\vert
x\right\vert ^{-d-2}+\left( 2-d\right) \left\vert x\right\vert ^{-d}\right]
=0 
\]%
Therefore $I_{13}=0.$ As for $I_{14},$ we have%
\[
\left\vert I_{14}\right\vert \leq \int_{\left\vert y\right\vert \leq
\left\vert x\right\vert /2}\left\vert \rho \left( y\right) \right\vert
\left\vert y\right\vert ^{3}\left\vert x\right\vert ^{-d-1}=O\left(
\left\vert x\right\vert ^{-d-1}\right) ,\ \left\vert x\right\vert \geq 1. 
\]%
The estimate of $I_{2}$ is simpler. Using the decay of $\rho ,$ one easily
checks that%
\[
\left\vert I_{2}\right\vert \leq C\left( 1+\left\vert x\right\vert \right)
^{-d-2} 
\]%
which is better than required.\hfill $\diamondsuit $
\end{proof}

\begin{lemma}
\label{Le_rho2}Let $\mu \in \left[ 0,1\right] $ and define%
\[
G_{\mu }(x):=\sum_{n=1}^{\infty }\mu ^{n}\varphi _{n}(x),\qquad x\in \mathbb{%
R}^{d}. 
\]%
Then, if $\mu <1,$ then $G_{\mu }\in \mathbf{B}$, and if $\rho $ satisfies
the conditions of Lemma \ref{lemma-rho}, then 
\begin{equation}
\Vert \rho \star G_{\mu }\Vert \leq C\mu ,\qquad \mu \in \left[ 0,1\right] .
\label{Est_rho_G_mu}
\end{equation}
\end{lemma}

\begin{proof}
Let us first verify that $G_{\mu }\in \mathbf{B}$ for any $\mu \in \lbrack
0,1)$. Since $\left\Vert \varphi _{n}\right\Vert =\varphi _{n}(0)\leq \left(
2\pi n\right) ^{-d/2}\leq 1=\left\Vert \varphi _{n}\right\Vert _{1}$, we
have $\Vert G_{\mu }\Vert _{\infty },\ \leq \Vert G_{\mu }\Vert _{1}\leq 
\frac{\mu }{1-\mu }$. \ Furthermore, 
\[
\sup_{x\in \mathbb{R}^{d}}|x|^{d}G_{\mu }(x)\leq \sum_{n=1}^{\infty }\mu ^{n}%
\frac{1}{(2\pi n)^{d/2}}\,\sup_{x\in \mathbb{R}^{d}}|x|^{d}\mathrm{e}%
^{-|x|^{2}/2n}=d^{d/2}\mathrm{e}^{-d/2}\sum_{n=1}^{\infty }\mu ^{n}\frac{1}{%
(2\pi n)^{d/2}}n^{d/2}\leq \frac{C\mu }{1-\mu }. 
\]%
Hence, $G_{\mu }\in \mathbf{B}$ and $\Vert G_{\mu }\Vert \leq \frac{C\mu }{%
1-\mu }$.

For (\ref{Est_rho_G_mu}), we can assume $\mu \neq 1.$ An elementary
calculation shows that 
\[
\rho \star G_{\mu }=\mu (\rho \star G)-(1-\mu )\rho \star G\star G_{\mu }. 
\]%
Therefore, for any $\mu \in \lbrack 0,1)$, 
\[
\Vert \rho \star G_{\mu }\Vert \leq \mu \Vert \rho \star G\Vert +(1-\mu
)2^{d+1}\Vert \rho \star G\Vert \,\Vert G_{\mu }\Vert \leq C\mu \Vert \rho
\star G\Vert 
\]%
by applying Lemma~\ref{lem-normesti}. Now Lemma~\ref{lemma-rho} implies the
assertion.
\end{proof}

\subsection{Proof of Lemma \protect\ref{Le_2}.}

We choose%
\begin{equation}
\mu :=\int G_{\lambda }^{{\scriptsize (\Pi )}}\left( x\right) dx.
\label{Def_mu}
\end{equation}%
Remark that from (\ref{Basic_Green_identity}), we have%
$$
\left( 1-\mu \right) \left( 1+\int G_{\lambda }^{{\scriptsize (\Gamma )}%
}\left( x\right) dx\right) 
=\left( 1-\int G_{\lambda }^{{\scriptsize (\Pi )}}\left( x\right)
dx\right) \left( 1+\int G_{\lambda }^{{\scriptsize (\Gamma )}}\left(
x\right) dx\right) =1.
$$%
As $G_{\lambda }^{\left( \Gamma \right) }\geq 0,$ the second factor above is 
$\geq 1,$ and therefore the first is in $(0,1],$ implying $
0\leq \mu <1$.
By assumption $G_{\lambda }^{\left( \Pi \right) }$ satisfies (\ref%
{Bound_Laplace_Pi}), we get also $
\left\vert \mu -\lambda \right\vert \leq C\alpha$.
With this choice of $\mu ,$ we have $\int \rho \left( x\right) \mathrm{d}x=0$
for%
\[
\rho :=-G_{\lambda }^{{\scriptsize (\Pi )}}+\mu \varphi ,
\]%
and, as $d\geq 5,$ and therefore $3d-6\geq d+4$,%
$$
\left\vert \rho \left( x\right) \right\vert  \leq \left\vert G_{\lambda }^{%
{\scriptsize (\Pi )}}\left( x\right) \right\vert +\alpha \varphi \left(
x\right)\leq C\alpha \left( 1+\left\vert x\right\vert \right) ^{-d-4}.
$$%
In particular, this implies that%
\begin{equation}
\left\Vert \rho \right\Vert \leq C\alpha ,  \label{bound_3}
\end{equation}%
and using Lemma \ref{Le_rho2} (applied to $\rho /C\alpha $ instead of $\rho $%
)
\[
\left\Vert \rho \ast G_{\mu }\right\Vert \leq C\alpha \left\vert \mu
\right\vert \leq C\alpha .
\]%
Using (\ref{bound_3}), this also implies%
\begin{equation}
\left\Vert \rho \ast G_{\mu }+\rho \right\Vert \leq C\alpha .
\label{bound_4}
\end{equation}%
Therefore, for $\alpha >0$ small enough, the Neumann series%
\[
Q:=\sum_{n=1}^{\infty }\left( -1\right) ^{n}\left( \rho \ast G_{\mu }+\rho
\right) ^{\ast n}
\]%
converges in $\mathbf{B}$ and satisfies
$
\left\Vert Q\right\Vert \leq C\alpha$.
Define $
S:=Q+G_{\mu }+Q\ast G_{\mu }$.

\begin{lemma}
Under the condition of Lemma \ref{Le_2}, if $\alpha >0$ is small enough,
then $S=G_{\lambda }^{{\scriptsize (\Gamma )}}$ for $\lambda <\lambda _{%
\mathrm{cr}}\left( \alpha \right) .$
\end{lemma}

\begin{proof}
One checks that%
\[
S-G_{\lambda }^{{\scriptsize (\Pi )}}-G_{\lambda }^{{\scriptsize (\Pi )}%
}\ast S=0.
\]%
$S$ and $G_{\lambda }^{{\scriptsize (\Pi )}}$ are in $L_{2}\left( \mathbb{R}%
^{d}\right) .$ Taking Fourier transforms $\hat{S},\ \hat{G}_{\lambda }^{%
{\scriptsize (\Pi )}}$, we obtain%
\[
\left( 1+\hat{S}\right) \left( 1-\hat{G}_{\lambda }^{{\scriptsize (\Pi )}%
}\right) =1.
\]%
We have that $G_{\lambda }^{{\scriptsize (\Gamma )}}\in L_{2}\left( \mathbb{R%
}^{d}\right) $ for $\lambda <\lambda _{\mathrm{cr}}\left( \alpha \right) ,$
and from (\ref{Basic_Green_identity})%
\[
\left( 1+\hat{G}_{\lambda }^{{\scriptsize (\Gamma )}}\right) \left( 1-\hat{G}%
_{\lambda }^{{\scriptsize (\Pi )}}\right) =1.
\]%
By Fourier inversion in $L_{2}\left( \mathbb{R}^{d}\right) ,$ the claim
follows.
\end{proof}

\begin{proof}[{\bf Proof of Lemma \protect\ref{Le_2}}]
We prove that $S=S_{\alpha ,\lambda },$ as constructed above, satisfies $%
S_{\alpha ,\lambda }\leq 2G$ for $\lambda <\lambda _{\mathrm{cr}}\left(
\alpha \right) ,$ provided $\alpha $ is small enough. From the previous
lemma, one then also has $G_{\lambda }^{{\scriptsize (\Gamma )}}\leq 2G.$

As $\mu <1$ we have $G_{\mu }\leq G.$ Furthermore, as $\left\Vert
Q\right\Vert \leq C\alpha ,$ one has for $\alpha $ small enough $\left\vert
Q\right\vert \leq G/2.$ It therefore suffices to show that for $\alpha $
small enough, one has%
\[
\left\vert \left( Q\ast G_{\mu }\right) \left( x\right) \right\vert \leq
G\left( x\right) /2.
\]%
For that it suffices that for any function $q\in \mathbf{B}$, with $q\geq 0,$
and $\left\Vert q\right\Vert \leq 1,$ one has $q\ast f_{d}\leq C_{d}f_{d},$
for some $C,$ depending only on $d,$ and $f_{d}\left( x\right) :=\left(
1+\left\vert x\right\vert \right) ^{-d+2}.$ \ Indeed,%
\begin{eqnarray*}
\left( q\ast f_{d}\right) \left( x\right)  &=&\int_{\left\vert
y-x\right\vert \leq \left\vert x\right\vert /2}q\left( y\right) f_{d}\left(
x-y\right) \mathrm{d}y+\int_{\left\vert y-x\right\vert >\left\vert
x\right\vert /2}q\left( y\right) f_{d}\left( x-y\right) \mathrm{d}y \\
&\leq &2^{d}\min \left( 1,\left\vert x\right\vert ^{-d}\right)
\int_{\left\vert z\right\vert \leq \left\vert x\right\vert /2}f_{d}\left(
z\right) \mathrm{d}z+\left\Vert q\right\Vert _{1}f_{d}\left( x/2\right)  \\
&\leq &C_{d}f_{d}\left( x\right) .
\end{eqnarray*}
\end{proof}

\section{Technical Estimates}\label{Sect_Technical_est}\label{sec-technical-est}

\noindent In the proofs of Lemma \ref{Le_1} and Lemma \ref{Le_2} we have used some technical estimates, which we state and prove in this section.
Recall that $\varphi$ is the standard Gaussian density and that the interaction function $v\colon[0,\infty)\to[0,\infty)$ is continuous and satisfies $v\leq L$ and $v(r)=0$ for $r>R$ for some parameters $L,R\in(0,\infty)$.

\begin{lemma}[Gaussian Decay of $u_{n,\be}$]
\label{Le_Est_u} 
Recall the function $u_n=u_{n,\be}$ defined in \eqref{def-un}. Then there exist constants $C,A  \in (0,\infty )$, depending
only on $d,L,R$, such that, for any $n$ and $\mathbf{x}= (
x_{1},x_{2},\ldots ,x_{2n-1},x_{2n} ) \in  ( \mathbb{R}^{d} )
^{2n}$, 
\begin{equation*}
{u}_{n} ( \mathbf{x} ) \leq C^{n}\be^{n-1}\exp  \Big[ -\frac{1}{8%
}\sum\nolimits_{i=1}^{n}|x_{2i}-x_{2i-1}|^{2}-A 
\sum\nolimits_{i=1}^{n-1}|x_{2i}-x_{2i+2}|^{2} \Big] .
\end{equation*}
\end{lemma}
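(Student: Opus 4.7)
The plan is to use the elementary inequality $U_\be(f,g) = 1-e^{-\be V(f,g)} \leq \be V(f,g)$ together with the Brownian bridge Gaussian decomposition to reduce the estimate to a finite-dimensional Gaussian integral, and then to combine the resulting decay with the bridge masses $\varphi_1(x_{2i-1}-x_{2i})$ via a triangle inequality.

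Applying $U \leq \be V = \be\int_0^1 v(|f(s)-g(s)|)\,ds$ to each of the $n-1$ interactions extracts the factor $\be^{n-1}$ and introduces auxiliary times $\mathbf s=(s_1,\dots,s_{n-1})\in [0,1]^{n-1}$:
\[
u_n(\mathbf x) \leq \be^{n-1} \int_{[0,1]^{n-1}} d\mathbf s \int_{\Ccal_1^n} \prod_{i=1}^{n-1} v\bigl(|f_i(s_i)-f_{i+1}(s_i)|\bigr) \prod_{i=1}^n \mu^{\ssup{1}}_{x_{2i-1}, x_{2i}}(df_i).
\]
For fixed $\mathbf s$ each bridge $f_i$ enters the integrand only through the (at most) two times $s_{i-1}$ and $s_i$; using the identity $\mu^{\ssup{1}}_{x,y}(df) = \varphi_1(x-y)\,\mu^{{\rm br};x,y}(df)$ together with the explicit two-time marginal of a Brownian bridge, integrating over the paths collapses to a Gaussian integral over the interaction-time differences $w_i := f_{i+1}(s_i) - f_i(s_i)\in\R^d$. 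The result is a Gaussian density on $(\R^d)^{n-1}$ with mean vector $\mathbf d(\mathbf s)=(d_i(s_i))_i$, where
\[
d_i(s_i) := (1-s_i)(x_{2i+1}-x_{2i-1}) + s_i(x_{2i+2}-x_{2i}),
\]
and a tridiagonal covariance depending only on $\mathbf s$ (the off-diagonal entries come from each middle bridge $f_i$ contributing two time marginals), multiplied by the prefactor $\prod_i \varphi_1(x_{2i-1}-x_{2i})$.

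Since $v$ is bounded by $L$ and supported in $[0,R]$, restricting $|w_i|\leq R$ and applying Gaussian concentration gives an upper bound of the form
\[
u_n(\mathbf x) \leq C^n \be^{n-1} \prod_{i=1}^n \varphi_1(x_{2i-1}-x_{2i}) \int_{[0,1]^{n-1}} d\mathbf s \,\exp\Bigl[-c\sum_{i=1}^{n-1} \tfrac{|d_i(s_i)|^2}{s_i(1-s_i)}\Bigr],
\]
modulo polynomial prefactors $(s_i(1-s_i))^{-d/2}$, which are dominated by the exponential when $|d_i(s_i)|\gtrsim\sqrt{s_i(1-s_i)}$ and contribute only a bounded integral otherwise. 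Splitting each $s_i$-integral into $[0,\tfrac12]$ and $[\tfrac12,1]$ extracts Gaussian decay in $|x_{2i-1}-x_{2i+1}|^2$ (from $s_i$ near $0$, since $d_i(0)=x_{2i+1}-x_{2i-1}$) and in $|x_{2i}-x_{2i+2}|^2$ (from $s_i$ near $1$, since $d_i(1)=x_{2i+2}-x_{2i}$); taking the weaker of the two yields $e^{-c\min(|x_{2i-1}-x_{2i+1}|^2,\,|x_{2i}-x_{2i+2}|^2)}$ per interaction. The triangle inequality
\[
|x_{2i}-x_{2i+2}|^2 \leq 3\bigl(|x_{2i-1}-x_{2i}|^2 + |x_{2i-1}-x_{2i+1}|^2 + |x_{2i+1}-x_{2i+2}|^2\bigr)
\]
then converts this into $e^{-(c/3)|x_{2i}-x_{2i+2}|^2}\cdot e^{c(|x_{2i-1}-x_{2i}|^2+|x_{2i+1}-x_{2i+2}|^2)}$, and the bridge prefactor $\varphi_1(x_{2i-1}-x_{2i}) = (2\pi)^{-d/2}e^{-|x_{2i-1}-x_{2i}|^2/2}$ has a margin of $3/8$ beyond the required $1/8$ to absorb the cross-terms $c|x_{2i-1}-x_{2i}|^2$ (summed over both interactions that involve each middle bridge) for any sufficiently small $c>0$, giving the claim with $A=c/3$.

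The main technical obstacle is carrying out this bound uniformly in $\mathbf s$. First, the Gaussian prefactors $(s_i(1-s_i))^{-d/2}$ diverge at the endpoints of $[0,1]$ and must be dominated by the exponential $e^{-c|d_i(s_i)|^2/(s_i(1-s_i))}$, which requires a case split according to whether $|d_i(s_i)|$ is comparable to $\sqrt{s_i(1-s_i)}$; in the opposing regime the integrand is controlled by the trivial bound $1$ and the apparent blow-up is integrable. Second, the tridiagonal covariance $\Sigma_{\mathbf s}$ of the joint Gaussian in $\mathbf w$ couples the coordinates through the middle bridges, so the factorized bound $\exp\bigl[-c\sum_i|d_i(s_i)|^2/(s_i(1-s_i))\bigr]$ above must be obtained by either a diagonalization of $\Sigma_{\mathbf s}$ (lower-bounding $\mathbf d^T\Sigma_{\mathbf s}^{-1}\mathbf d$ by a sum of diagonal terms) or an inductive reduction on $n$ in which one integrates out the bridges sequentially from $f_n$ back to $f_1$, extracting one Gaussian decay factor at a time.
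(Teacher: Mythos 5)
Your approach---linearize $U_\be \leq \be V$, collapse the bridge integrals to a finite-dimensional Gaussian over the interaction-time differences $w_i = f_{i+1}(s_i)-f_i(s_i)$, and extract decay from the mean displacement $d_i(s_i)$---is genuinely different from the paper's, which instead bounds $U_{i,j}\leq \be L$, represents each bridge as linear interpolation plus a standard Brownian bridge, settles $n=2$ via a case split on whether $\max_i|x_{2i}-x_{2i-1}|>\frac14|x_4-x_2|$ combined with the sub-Gaussian tail of $\sup_t|\mathrm{BB}(t)|$, and handles general $n$ by Cauchy--Schwarz, separating the product into odd- and even-indexed interactions that are then pairwise independent.

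However, your argument has a gap. You claim that splitting the $s_i$-integral into $[0,\frac12]$ and $[\frac12,1]$ ``extracts Gaussian decay'' in $|d_i(0)|^2$ and $|d_i(1)|^2$ respectively, yielding $e^{-c\min(|d_i(0)|^2,|d_i(1)|^2)}$. This is false: $d_i(s)=(1-s)d_i(0)+s\,d_i(1)$ can vanish at an interior point of $[0,1]$ even when both $|d_i(0)|$ and $|d_i(1)|$ are arbitrarily large (take $d_i(0)=v$, $d_i(1)=-v$), and near such a zero the overlap probability $\P(|w_i|\leq R)$ is order one, so the $s_i$-integral is $O(1)$ with no decay at all. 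Your own remark that in ``the opposing regime the integrand is controlled by the trivial bound $1$'' acknowledges exactly this, but controlling the apparent blow-up is not enough---you need decay there, and none comes from the $s_i$-integral. The missing idea is that if $d_i(0)$ and $d_i(1)$ are (nearly) anti-parallel then $|d_i(1)-d_i(0)|\geq\max(|d_i(0)|,|d_i(1)|)$, while by definition $d_i(1)-d_i(0)=(x_{2i+2}-x_{2i+1})-(x_{2i}-x_{2i-1})$, so one of the two bridge chords must be comparably large and its prefactor $\varphi_1$ supplies the missing Gaussian factor; this is precisely the role of the paper's set $W$. Your triangle inequality $|x_{2i}-x_{2i+2}|^2\leq 3(\cdots)$ is the right tool for this but is used only to massage the final form of the exponent, not to rescue the degenerate regime. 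In addition, the tridiagonal covariance coupling through shared middle bridges for $n>2$ is only sketched; the paper's Cauchy--Schwarz trick avoids it entirely, whereas your route requires an explicit lower bound on $\mathbf d^{\mathrm T}\Sigma_{\mathbf s}^{-1}\mathbf d$ (or a sequential integration) that is not carried out.
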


\begin{proof}Let us first assume that $n=2m$ is even (the case of odd $n$ is similar, we drop it in the following), and  identify $u_{2m}(\mathbf x)$ in terms of integrals over standard Brownian bridges $\mathrm{BB}^{\ssup i}$ on $ [0,1 ]$ that start and terminate  at zero. Indeed, if we define $\overline{x}_{t}^{\ssup i}:= ( 1-t ) x_{2i-1}+tx_{2i}$, then $(\overline{x}_{t}^{\ssup i }+\mathrm{BB}^{\ssup {i}} ( t ))_{t\in[0,1] }$ are independent Brownian bridges from $x_{2i-1}$ to $x_{2i}$, $i=1,\dots,m$. That is, they have the same distribution as $\mu^{\ssup 1}_{x_{2i-1},x_{2i}}/\varphi(x_{2i}-x_{2i-1})$. Hence,  we can write
\begin{equation}\label{Uenrepresentation}
u_{2m} ( \mathbf{x} )
=
\prod_{i=1}^{2m}\varphi  (
x_{2i}-x_{2i-1} ) \mathbb{E}\bigg[ \prod_{i=1}^{2m-1} (
1-\exp  [ -\be \varpi _{i} ]  ) \bigg], \\
\end{equation}
where
\begin{equation*}
\varpi _{i}:=\int_{0}^{1}\d t\,\ v \Big( \big|\overline{x}_{t}^{\ssup i }+\mathrm{%
BB}^{\ssup {i}} ( t ) -\overline{x}_{t}^{ \ssup {i+1}}-\mathrm{BB}%
^{\ssup {i+1}} ( t ) \big| \Big)\qquad\mbox{and }\overline{x}_{t}^{\ssup i}= ( 1-t ) x_{2i-1}+tx_{2i}.
\end{equation*}

We will later upper estimate the expectation on the right-hand side of \eqref{Uenrepresentation} against products of expectations over Brownian brides, using the Cauchy--Schwarz  inequality; hence it will be critical and almost sufficient to handle the case $n=2$. Hence, first prove the assertion for the case $n=2$.  We will use the well known fact (proved with the help of the reflection principle) that the distribution of $\sup_{t\in[0,1]} \vert 
\mathrm{BB} ( t )  \vert $ has sub-gaussian tails, i.e., there
exist constants $c_{1},c_{2}>0$, depending only on $d,$ such that%
\begin{equation}
\mathbb{P}\Big( \sup_{t\in[0,1]} \vert \mathrm{BB} ( t )
 \vert >u\Big) \leq c_{1}\exp  [ -c_{2}u^{2} ],\qquad u\in(0,\infty) .
\label{Bridge_tails}
\end{equation}

We first consider the set%
\begin{equation*}
W:=\Big \{\mathbf x\in\R^4\colon \max_{i=1,2} \vert x_{2i}-x_{2i-1} \vert >\frac{1}{4}%
 \vert x_{4}-x_{2} \vert  \Big\} .
\end{equation*}%
If $ \vert x_{2}-x_{1} \vert > \vert x_{4}-x_{2} \vert /4,$
we have%
\begin{equation}
\exp  \Big[ -\frac{1}{2} \vert x_{2}-x_{1} \vert ^{2} \Big] \leq
\exp  \Big[ -\frac{1}{4} \vert x_{2}-x_{1} \vert ^{2}-\frac{1}{64}%
 \vert x_{4}-x_{2} \vert ^{2} \Big] ,  \label{Est_u4_1}
\end{equation}%
and similarly when $ \vert x_{4}-x_{3} \vert > \vert
x_{4}-x_{2} \vert /4.$ Therefore, for $\mathbf x\in W,$ we have, also using that $v\leq L$ and that $1-\e^{-\alpha L}\leq \alpha L$,%
\begin{equation*}
{u}_{2} ( \mathbf{x} ) \leq \frac{L\be }{ ( 2\pi  ) ^{d}}%
\exp  \Big[ -\frac{1}{4} \vert x_{2}-x_{1} \vert ^{2}-\frac{1}{4}%
 \vert x_{4}-x_{3} \vert ^{2}-\frac{1}{64} \vert
x_{4}-x_{2} \vert ^{2} \Big] ,
\end{equation*}
which shows the claim for $n=2$ on $W$.

On $W^{\rm c},$ and on the event
\begin{equation*}
\Big\{\sup_{t\in[0,1]} \vert \mathrm{BB}^{\ssup i} ( t )  \vert \leq 
\frac{1}{2} \vert x_{4}-x_{2} \vert -2R\Big\}
\end{equation*}%
one has $\varpi _{1}=0.$ Therefore, on $W^{\rm c},$%
 \begin{equation}\label{Est_u4_2}
 \begin{aligned}
{u}_{2} ( \mathbf{x} ) &\leq \frac{L\be }{ ( 2\pi  )
^{d}}\exp  \Big[ -\frac{1}{2} \vert x_{2}-x_{1} \vert ^{2}-\frac{1}{%
2} \vert x_{4}-x_{3} \vert ^{2} \Big] 2\mathbb{P} \Big( \sup_{t\in[0,1]} \vert \mathrm{BB} ( t )
 \vert >\frac{1}{2} \vert x_{4}-x_{2} \vert -2R \Big) \\
 &\leq C\be \exp  \Big[ -\frac{1}{2} \vert x_{2}-x_{1} \vert ^{2}-\frac{1%
}{2} \vert x_{4}-x_{3} \vert ^{2}-A   \vert
x_{4}-x_{2} \vert ^{2} \Big],  
\end{aligned}
\end{equation}
using (\ref{Bridge_tails}),
with some constants $C,A  >0,$ depending only on $d,L,R.$ This proves the
claim for $n=2$, actually with slightly better coefficients in the exponent.
But the ones in the statement comes up in an induction argument.

Let $n\geq 3.$ To simplify the notation, we take $n$ even, $n=2m.$ (The
case of an odd $n$ follows by a straightforward modification.) We have 
$$
\begin{aligned}
\frac{u_{2m} ( \mathbf{x} )}
{\prod_{i=1}^{2m}\varphi  (
x_{2i}-x_{2i-1} )} &= \mathbb{E}\bigg[ \prod\nolimits_{j=1}^{2m-1} (
1-\exp  [ -\be \varpi _{j} ]  ) \bigg] \\
&=\mathbb{E}\bigg[
\prod\nolimits_{j=1}^{m} ( 1-\exp  [ -\be \varpi _{2j-1} ]
 ) \prod\nolimits_{j=1}^{m-1} ( 1-\exp  [ -\be \varpi _{2j}%
 ]  ) \bigg] \\
&\leq  \bigg[ \mathbb{%
E}\bigg(\prod\nolimits_{j=1}^{m}\big( 1-\exp  [ -\be \varpi _{2j-1} ]\big)^2\bigg)
 \mathbb{E}\bigg(\prod\nolimits_{j=1}^{m-1}\big( 1-\exp  [ -\be
\varpi _{2j}  ] \big)^2 \bigg) \bigg] ^{1/2} \\
&\leq \bigg[ \mathbb{%
E}\bigg(\prod\nolimits_{j=1}^{m}\big( 1-\exp  [ -\be \varpi _{2j-1} ]\big)\bigg)
 \mathbb{E}\bigg(\prod\nolimits_{j=1}^{m-1}\big( 1-\exp  [ -\be
\varpi _{2j}  ] \big)\bigg) \bigg] ^{1/2} \\
&= \bigg[
\prod\nolimits_{j=1}^{m}\mathbb{E}\Big( 1-\exp  [ -\be \varpi _{2j-1}%
 ] \Big) \prod\nolimits_{j=1}^{m-1}\mathbb{E}\Big( 1-\exp  [
-\be \varpi _{2j} ] \Big) \bigg] ^{1/2}
\end{aligned}
$$
In the third line, we used the Cauchy-Schwarz bound, while in the following inequality, we used that $1-\mathrm{e}^{-\be
\varpi }\in  ( 0,1 ) ,$ and so we can drop the square. In the fifth line we used independence.
Hence, we get
$$
\begin{aligned}
u_{2m} ( \mathbf{x} )
&\leq \Big(\prod_{i=1}^{2m}\sqrt{\varphi  (
x_{2i}-x_{2i-1} )}\Big)
\sqrt{\varphi  ( x_{2}-x_{1} ) }\sqrt{\varphi  (
x_{2m}-x_{2m-1} ) } \\
&\qquad\times \sqrt{\prod\nolimits_{j=1}^{m}\varphi  ( x_{4j}-x_{4j-1} ) \varphi  (
x_{4j-2}-x_{4j-3} ) \mathbb{E} ( 1-\exp  [ -\be \varpi _{2j-1}%
 ]  ) } \\
&\qquad\times \sqrt{\prod\nolimits_{j=1}^{m-1}\varphi  ( x_{4j+2}-x_{4j+1} ) \varphi  (
x_{4j}-x_{4j-1} ) \mathbb{E} ( 1-\exp  [ -\be \varpi _{2j}%
 ]  ) }\\
 &\leq \Big(\prod_{i=1}^{2m}\sqrt{\varphi  (
x_{2i}-x_{2i-1} )}\Big)
\sqrt{\varphi  ( x_{2}-x_{1} ) }\sqrt{\varphi  (
x_{2m}-x_{2m-1} ) } \\
&\qquad\times \sqrt{\prod\nolimits_{j=1}^{m} u_2(x_{4j },x_{4j-1},x_{4j-2},x_{4j-3})} \times \sqrt{\prod\nolimits_{j=1}^{m-1}
u_2(x_{4j+2},x_{4j+1},x_{4j},x_{4j-1})}
\end{aligned}
$$
Now we apply the estimate in \eqref {Est_u4_2} for each of terms in the last line and summarize to finish the proof for $n=2m$. 
\end{proof}

\begin{lemma}
\label{Le_Gauss_convol}
\begin{enumerate}
\item[a)] For any $m\in \mathbb{N}$ and $h>0,$  there exists $C (
m,h ) >0$ such that%
\begin{equation*}
\frac{1}{C ( m,h )  ( 1+ \vert y \vert  ) ^{m}}%
\leq \int_{\mathbb{R}^{d}}\frac{1}{ ( 1+ \vert x \vert  )
^{m}}\mathrm{e}^{-h \vert y-x \vert ^{2}}\,\d x\leq \frac{C (
m,h ) }{ ( 1+ \vert y \vert  ) ^{m}},\qquad y\in\R^d.
\end{equation*}

\item[b)] For any $m_{1},m_{2}\in \mathbb{N}$ and $h>0$, there exists $%
C ( m_{1},m_{2},h ) >0$ such that%
$$
\int_{\mathbb{R}^{d}}\frac{1}{ ( 1+ \vert y_{1}-x \vert
 ) ^{m_{1}}}\frac{1}{ ( 1+ \vert y_{2}-x \vert  )
^{m_{2}}}\mathrm{e}^{-h \vert x \vert ^{2}}\,\d x \leq \frac{C ( m_{1},m_{2},h ) }{ ( 1+ \vert
y_{1} \vert  ) ^{m_{1}} ( 1+ \vert y_{2} \vert
 ) ^{m_{2}}},\qquad y_1,y_2\in\R^d.
$$
\end{enumerate}
\end{lemma}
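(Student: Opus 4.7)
For part (a), the plan is to split the integration domain according to the distance between $x$ and $y$. For the upper bound, on the near region $\{|y-x| \leq |y|/2\}$ the reverse triangle inequality gives $|x| \geq |y|/2$, so the polynomial factor is dominated by $2^{m}(1+|y|)^{-m}$ and the remaining Gaussian integrates to a finite constant depending only on $h$ and $d$. On the complementary far region $\{|y-x| > |y|/2\}$, I will split the exponent as $e^{-h|y-x|^{2}} \leq e^{-h|y|^{2}/8}\, e^{-h|y-x|^{2}/2}$, using that $e^{-h|y|^{2}/8}$ decays faster than any polynomial and can be absorbed into $C(1+|y|)^{-m}$, while the remaining Gaussian integrates to a constant. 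For the lower bound, restricting the integration to the unit ball $\{|y-x| \leq 1\}$ suffices: on this set, $(1+|x|)^{-m} \geq (2+|y|)^{-m} \geq 2^{-m}(1+|y|)^{-m}$ by the triangle inequality, and $e^{-h|y-x|^{2}} \geq e^{-h}$, so the restricted integral is already of the desired form.

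For part (b), I plan to split the integration according to whether $|x|$ is small with respect to $|y_{1}|/2$ and $|y_{2}|/2$. On $\{|x| \leq \min(|y_{1}|, |y_{2}|)/2\}$, the reverse triangle inequality applied twice yields $(1+|y_{i}-x|)^{-m_{i}} \leq 2^{m_{i}}(1+|y_{i}|)^{-m_{i}}$ for $i = 1,2$, so the integrand is bounded by $C(1+|y_{1}|)^{-m_{1}}(1+|y_{2}|)^{-m_{2}}\, e^{-h|x|^{2}}$, and the Gaussian is integrated trivially. On the complementary region there exists $i \in \{1,2\}$ with $|x| > |y_{i}|/2$; for this case I will use $e^{-h|x|^{2}} \leq e^{-h|y_{i}|^{2}/8}\, e^{-h|x|^{2}/2}$, extract $C(1+|y_{i}|)^{-m_{i}}$ from the first factor, drop the $(1+|y_{i}-x|)^{-m_{i}} \leq 1$ factor, and apply the upper bound of part (a) (after the change of variable $x \mapsto y_{j} - x$, $j \neq i$) to the remaining integral to obtain the complementary factor $C(1+|y_{j}|)^{-m_{j}}$.

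The only ingredient beyond the reverse triangle inequality and standard Gaussian integration is the elementary estimate $e^{-hr^{2}/8} \leq C_{M,h}(1+r)^{-M}$ for any $M \in \mathbb{N}$, together with a neat decoupling argument based on the splitting of the Gaussian exponent into halves. I do not anticipate any substantial obstacle; the main care lies in bookkeeping the various subregions so that the constants can be chosen uniformly in $y$ (or $y_{1}, y_{2}$).
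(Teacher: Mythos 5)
Your part (a) is essentially the paper's argument: the same split of the domain into $\{|y-x|\leq|y|/2\}$ and its complement, the same use of the reverse triangle inequality on the near region, and a Gaussian-tail estimate on the far region (you split the exponent in halves; the paper retains the tail integral $\int_{|z|>|y|/2}e^{-h|z|^2}\,\d z$ — cosmetically different, mathematically identical). Your lower bound, restricting to $\{|y-x|\leq 1\}$, is the same as the paper's up to harmless rearrangement.

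For part (b), however, you take a genuinely different route. The paper simply applies Cauchy--Schwarz to factor the integral into
\begin{equation*}
\Bigl(\int (1+|y_1-x|)^{-2m_1}\,\e^{-h|x|^2}\,\d x\Bigr)^{1/2}\Bigl(\int (1+|y_2-x|)^{-2m_2}\,\e^{-h|x|^2}\,\d x\Bigr)^{1/2},
\end{equation*}
and then invokes part (a) twice with exponents $2m_1$ and $2m_2$ — a two-line argument. You instead split the domain at $\{|x|\leq\min(|y_1|,|y_2|)/2\}$: on the near region both polynomial factors are controlled by the reverse triangle inequality, and on the far region you fix $i$ to be the index achieving the minimum, peel off $\e^{-h|y_i|^2/8}$ to supply the $(1+|y_i|)^{-m_i}$ factor, drop the corresponding polynomial, and apply part (a) with parameter $h/2$ to get the remaining $(1+|y_j|)^{-m_j}$. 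Both arguments are correct and yield the claimed constant depending only on $m_1,m_2,h$ (and $d$). The paper's is shorter and avoids casework; yours is more elementary (no H\"older-type tool) and exhibits more explicitly which region of $x$-space contributes what, which can be useful if one later wants sharper or non-symmetric bounds. No gap — the proposal works.
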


\begin{proof}
a)
$$
\begin{aligned}
\int_{\mathbb{R}^{d}}\frac{1}{ ( 1+ \vert x \vert  ) ^{m}}%
\mathrm{e}^{-h \vert y-x \vert ^{2}}\,\d x
 &=\int\limits_{ \vert
x-y \vert \leq  \vert y \vert /2}\frac{1}{ ( 1+ \vert
x \vert  ) ^{m}}\mathrm{e}^{-h \vert y-x \vert
^{2}}\,\d x+\int\limits_{ \vert x-y \vert > \vert y \vert /2}%
\frac{1}{ ( 1+ \vert x \vert  ) ^{m}}\mathrm{e}%
^{-h \vert y-x \vert ^{2}}\,\d x \\
&\leq \frac{1}{ ( 1+ \vert y \vert /2 ) ^{m}}\int \mathrm{%
e}^{-h \vert z \vert ^{2}}\,\d z+\int_{ \vert z \vert
> \vert y \vert /2}\mathrm{e}^{-h \vert z \vert ^{2}}\,\d z \\
&\leq 2^{m}\pi ^{d/2}h^{-d/2}\frac{1}{ ( 1+ \vert y \vert
 ) ^{m}}+\int_{ \vert z \vert > \vert y \vert /2}%
\mathrm{e}^{-h \vert z \vert ^{2}}\,\d z,
\end{aligned}
$$%
where we used in the first integration area that $\vert
x-y \vert \leq  \vert y \vert /2$ implies that $|x|\geq |y|-|x-y|\geq |y|/2$. As the second summand on the right-hand side has a Gaussian tail bound, the second inequality follows.

For the first,%
\begin{eqnarray*}
\int_{\mathbb{R}^{d}}\frac{1}{ ( 1+ \vert x \vert  ) ^{m}}%
\mathrm{e}^{-h \vert y-x \vert ^{2}}\,\d x &\geq
&\int\limits_{ \vert x \vert \leq  \vert y \vert +1}\frac{1%
}{ ( 1+ \vert x \vert  ) ^{m}}\mathrm{e}^{-h \vert
y-x \vert ^{2}}\,\d x \\
&\geq &\frac{1}{ ( 2+ \vert y \vert  ) ^{m}}%
\int\limits_{ \vert z \vert \leq 1}\mathrm{e}^{-h \vert
z \vert ^{2}}\,\d z\geq c ( m,h ) \frac{1}{ ( 1+ \vert
y \vert  ) ^{m}}
\end{eqnarray*}%
for some small $c ( m,h ) >0$ and all $y\in\R^d$.

b) follows by applying the  Cauchy--Schwarz inequality and a).
\end{proof}

\begin{lemma}
\label{Le_Edgeworth}$G$ satisfies%
\begin{equation*}
G ( x ) =a_{d} \vert x \vert ^{2-d}+O (  \vert
x \vert ^{-d-2} )\qquad \mbox{as }|x|\to\infty,\qquad\mbox{where }a_{d}:=\frac{\Gamma  ( d/2-1 ) }{2\pi ^{d/2}},
\end{equation*}
and $\Gamma $ is the standard Gamma function.
\end{lemma}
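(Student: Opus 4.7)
The plan is to compare the sum $G(x) = \sum_{n=1}^\infty \varphi_n(x)$ to its continuous analog $\int_0^\infty \varphi_t(x)\,\d t$ and extract the discretization error via Poisson summation. The key observation is that $g_x(t) := \varphi_t(x) = (2\pi t)^{-d/2}\exp(-|x|^2/(2t))$, extended by $0$ for $t \leq 0$, lies in $C^\infty(\R)$, because the exponential factor and all its derivatives vanish superpolynomially as $t \to 0^+$; for $d\geq 3$, $g_x$ and each of its derivatives belong to $L^1(\R)$.

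First I would compute the continuous analog. The substitution $t = |x|^2 s$ turns $\int_0^\infty g_x(t)\,\d t$ into $|x|^{2-d}\int_0^\infty (2\pi s)^{-d/2}e^{-1/(2s)}\,\d s$, and a further substitution $u = 1/(2s)$ identifies the remaining integral as $2^{d/2-1}\Gamma(d/2-1)$, giving precisely $a_d|x|^{2-d}$ with $a_d = \Gamma(d/2-1)/(2\pi^{d/2})$.

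Next, applying the Poisson summation formula to $g_x$ yields
$$G(x) = \sum_{n\in\Z} g_x(n) = \sum_{k\in\Z} \widehat{g_x}(k),$$
with the $k=0$ term equal to $\int g_x = a_d |x|^{2-d}$. For $k \neq 0$, integrating by parts twice (boundary terms vanish since $g_x$ and $g_x'$ both tend to $0$ at $0$ and at $\infty$) gives $|\widehat{g_x}(k)| \leq (2\pi k)^{-2}\|g_x''\|_{L^1(\R)}$. The scaling $g_x(t) = |x|^{-d}\tilde g(t/|x|^2)$ with $\tilde g(s) := (2\pi s)^{-d/2}e^{-1/(2s)}$ yields, via chain rule and change of variables, that $\|g_x''\|_{L^1} = |x|^{-d-2}\|\tilde g''\|_{L^1}$, and $\|\tilde g''\|_{L^1}$ is finite thanks to exponential decay at $s=0$ and $s^{-d/2-2}$ decay at $s=\infty$. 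Summing over $k \neq 0$ then gives
$$\bigl|G(x) - a_d|x|^{2-d}\bigr| \leq \frac{2\zeta(2)}{(2\pi)^2}\|\tilde g''\|_{L^1}|x|^{-d-2} = O(|x|^{-d-2}),$$
which is the asserted bound.

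The only technical points are verifying the hypotheses of the Poisson summation formula: absolute convergence of $\sum_n|g_x(n)|$ follows from $g_x(n) \leq C n^{-d/2}$ (using $d \geq 3$), and absolute convergence of $\sum_k|\widehat{g_x}(k)|$ follows from the $1/k^2$ decay bound obtained above. Neither step presents a genuine obstacle; the whole argument is essentially a quadrature error computation made precise by the rapid decay of $g_x$ at the origin.
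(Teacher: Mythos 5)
Your proof is correct, but it takes a genuinely different route from the paper's. The paper compares $G$ directly to the continuous-time Green function $G_{\mathrm c}(x)=\int_0^\infty\varphi_t(x)\,\mathrm{d}t=a_d|x|^{2-d}$ by writing
$G_{\mathrm c}(x)-G(x)=\sum_{n\geq1}\int_{n-1/2}^{n+1/2}\big[\varphi_t(x)-\varphi_n(x)\big]\,\mathrm{d}t+\int_0^{1/2}\varphi_t(x)\,\mathrm{d}t$,
Taylor-expanding $\varphi_t(x)$ around $t=n$ to second order and exploiting that the first-order term integrates to zero over the symmetric interval $[n-1/2,n+1/2]$ (a midpoint-quadrature argument); the second-order remainder and the small-$t$ tail are then bounded explicitly. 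You instead apply Poisson summation to $g_x(t):=\varphi_t(x)\mathbf 1_{\{t>0\}}$: the $k=0$ mode is $a_d|x|^{2-d}$, and the $k\neq 0$ modes are killed by two integrations by parts together with the scaling $g_x(t)=|x|^{-d}\tilde g(t/|x|^2)$. Both approaches hinge on the same underlying cancellation — the smooth extension of $\varphi_t(x)$ by zero across $t=0$, owing to the superexponential vanishing of $\mathrm{e}^{-|x|^2/2t}$ — but Poisson summation packages the mechanism into a single invocation rather than exhibiting the cancellation by hand. Your argument is cleaner and, as a bonus, extends immediately to show that the error is in fact $O(|x|^{-N})$ for every $N$ (since all derivatives $\tilde g^{(2m)}$ are in $L^1$, repeated integration by parts yields $O(|x|^{2-d-2m})$), which is strictly stronger than the stated $O(|x|^{-d-2})$. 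The paper's argument is more elementary, avoiding Fourier machinery, and makes the quadrature error and small-$t$ tail contributions explicit. Your verification of the Poisson hypotheses is adequate: absolute convergence of $\sum_n g_x(n)$ needs $d\geq 3$ (as you note), and the $k^{-2}$ decay of $\widehat{g_x}$ suffices for the dual sum.
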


\begin{proof} We approximate $G$ by the Green function in continuous time:%
\begin{equation*}
G_{\mathrm{c}} ( x ) :=\int_{0}^{\infty }\varphi _{t} (
x ) \,\d t=a_{d} \vert x \vert ^{-d+2}.
\end{equation*}%
To determine the correction, we consider 
\begin{equation*}
G_{\mathrm{c}} ( x ) -G ( x ) =\sum_{n=1}^{\infty
}\int_{n-1/2}^{n+1/2} [ \varphi _{t} ( x ) -\varphi _{n} (
x )  ]\,\d t+\int_{0}^{1/2}\varphi _{t} ( x )\,\d t.
\end{equation*}%
The second summand has Gaussian tails  in $ \vert x \vert ,$ so we have
only to care about the first one.  A Taylor expansion gives
\begin{equation}\label{Taylor}
\varphi _{t} ( x ) -\varphi _{n} ( x ) = ( t-n )
\partial _{1}\varphi _{n} ( x ) +\frac{ ( t-n ) ^{2}}{2}%
\partial _{1}^{2}\varphi _{n+\theta  ( t-n ) } ( x ) ,\qquad
\theta \in  [ 0,1 ].
\end{equation}%
Note that
\begin{eqnarray*}
\partial _{1}\varphi _{t} ( x ) &=& \Big[ \frac{ \vert
x \vert ^{2}}{2t^{2}}-\frac{d}{2t} \prod_{i=1}^{2m}\varphi  (
x_{2i}-x_{2i-1} )\Big] \varphi _{t} ( x ) \\
\partial _{1}^{2}\varphi _{t} ( x ) &=& \prod_{i=1}^{2m}\varphi  (
x_{2i}-x_{2i-1} )\Big[ -2\frac{ \vert
x \vert ^{2}}{2t^{3}}+\frac{d}{2t^{2}} \Big] \varphi _{t} (
x ) + [ \frac{ \vert x \vert ^{2}}{2t^{2}}-\frac{d}{2t}%
 \Big] ^{2}\varphi _{t} ( x ) \\
&=&\frac{1}{t^{2}} \Big[  \Big( \frac{ \vert x \vert ^{2}}{t}%
 \Big) ^{2}-\frac{ \vert x \vert ^{2}}{t} ( 1+\smfrac{d}{2}%
 ) +\frac{d}{2} ( 1+\smfrac{d}{2} )  \Big] \varphi _{t} (
x )
\end{eqnarray*}
The integration $\int_{n-1/2}^{n+1/2}\,\d t$ over the first summand on the right-hand side of \eqref{Taylor} is $0.$
Therefore%
$$
\begin{aligned}
\sum_{n=1}^{\infty }\Big \vert \int_{n-1/2}^{n+1/2} [ \varphi _{t} ( x ) -\varphi
_{n} ( x )  ] \,\d t \Big\vert
& \leq \frac{1}{24}\sum_{n=1}^{\infty }\sup_{t\in  [
n-1/2,n+1/2 ] } \vert \partial _{1}^{2}\varphi _{t} ( x )
 \vert\\
 &\leq
\sum_{n=1}^{\infty }\frac{C}{n^{2+d/2}} \Big(  \Big( \frac{ \vert
x \vert ^{2}}{n} \Big) ^{2}+1 \Big) \exp  \Big[ -\frac{ \vert
x \vert ^{2}}{2 ( n+1 ) ^{2}} \Big] \\
&=O (  \vert x \vert ^{-d-2} )
\end{aligned}
$$%
as $ \vert x \vert  \to \infty .$
\end{proof}

\noindent{\bf Acknowledgement.} The research of the third author is funded by the Deutsche Forschungsgemeinschaft (DFG) under Germany's Excellence Strategy EXC 2044-390685587, Mathematics M\"unster: Dynamics-Geometry-Structure. The authors are indebted to Gordon Slade for very valuable feedback on the earlier version of the paper.

\end{document}